\newtheorem{thm}{Theorem}[section]
\newtheorem{cor}[thm]{Corollary}
\newtheorem{defn}[thm]{Definition}
\newtheorem{prop}[thm]{Proposition}
\newtheorem{exa}[thm]{Example}
\newtheorem{rmk}[thm]{Remark}
\newcommand{\diag}{\mathop{\mathrm{diag}}}
\newcommand{\id}{\mathop{\mathrm{id}}}
\begin{document}

\title{Riemann slice-domains over quaternions II}
\author{Xinyuan Dou}
\email[X.~Dou]{douxy@mail.ustc.edu.cn}
\author{Guangbin Ren}
\email[G.~Ren]{rengb@ustc.edu.cn}
\date{\today}
\address{Department of Mathematics, University of Science and Technology of China, Hefei 230026, China}
\keywords{Functions of hypercomplex variable; domains of holomophy; Riemann domains; quaternions; slice regular functions; representation formula}

\subjclass[2010]{Primary: 30G35; Secondary: 32A30, 32D05, 32D26}

\begin{abstract}
	We generalize the representation formula from slice-domains of regularity to general Riemann slice-domains. This result allows us to extend the $*$-product of slice regular functions on axially symmetric domains to certain Riemann slice-domains by introducing holomorphic stem systems and tensor holomorphic functions. In particular, we construct a power series expansions of slice regular functions on certain Riemann slice-domains, which implies a relation between stem holomorphic, tensor holomorphic and slice regular functions.
 \end{abstract}

\maketitle

\section{Introduction}

In order to solve the problem of multivalued functions originated from the analytic continuation of slice regular functions on quaternions, we develop a theory of Riemann slice-domains (see \cite{Dou2018001}) following the classical one in complex analysis. In this paper, we will generalize the representation formula and the $*$-product to some suitable Riemann slice-domains.

Gentili and Struppa introduce the theory of slice regular functions in \cite{Gentili2006001,Gentili2007001}, which has developed rapidly in the past 10 years (see \cite{Gentili2008002,Colombo2010003,Colombo2015001,Ghiloni20171001}). A representation formula on axially symmetric domains \cite{Colombo2009001,Colombo2010001}, applied in \cite{Colombo2011001,Gentili2016001,Colombo2018001}, plays an important role in this theory. This formula relates the value that a slice regular function assumes at one point to the values it assumes on two certain axially symmetric points. Therefore we can extend several results from complex analysis to the theory of slice regular functions by this formula.

We have proved the representation formula over slice-domains of regularity (see \cite[Theorem \ref{th-dere}]{Dou2018001}). In this paper, we will provide a weak condition  (see Section \ref{sc-2sli}), and prove a representation formula over general Riemann slice-domains (see Theorem \ref{tm-rf}). Then we show some symmetry of slice-domains of existence in Section \ref{sc-2rf}. In Section \ref{sc-2ese}, two examples show that how the representation formula works. With the increase of the dimension of the imaginary units, the representation formula is more complicated. Even these two simple examples are somewhat intricate.

We next consider the $*$-product in Riemann slice-domains. In the theory of slice-regular functions, the $*$-product is introduced in \cite{Gentili2008001} and generalized \cite{Colombo2009001} to axially symmetric slice domains in $\mathbb{H}$. The $*$-product also has been generalized to Clifford algebras \cite{Colombo2010002} and real alternative algebras \cite{Ghiloni2011001}. It has important applications in Schur analysis \cite{Alpay2012001,Alpay2013001}, Schwarz lemma \cite{Bisi2012001,Ren2017002}, twistor transforms of quaternionic functions \cite{Gentili2014001}, and the theory of quaternionic operators \cite{Alpay2015001,Alpay2016001,Ghiloni2018001}.
In our setting, we need to introduce a theory of holomorphic stem systems (see Section \ref{sc-hss}), following a method proposed by Fueter in \cite{Fueter1934/35}. This method has been extended by Ghiloni and Perotti in order to develop a theory of slice regular functions on real alternative algebras in \cite{Ghiloni2011001}. Our definition of holomorphic stem systems relies heavily on the representation formula. In the representation formula, one factor does not depend on the choice of imaginary units, which  motivates us to define the concept of the stem holomorphic functions. For this concept, due to the topological intricacy of Riemann slice-domains, more compatibility conditions (see Definition \ref{df-hssy}) are needed than the complex intrinsic in \cite[Definition 4]{Ghiloni2011001}. Thanks to these compatibility conditions, we can lift each holomorphic stem system to a  slice regular function on some suitable Riemann slice-domain (see Theorem \ref{th-sfsr}).

However, notice that stem holomorphic functions are vector-valued (more precisely, $M_{2^N\times 1}(\mathbb{H})$-valued for some $N\in\mathbb{N}^+$). We can not define simply their multiplication. Following \cite{Ghiloni2011001}, we define tensor product-valued (more precisely, $\mathbb{C}^{\otimes N}\otimes\mathbb{H}$-valued for some $N\in\mathbb{N}^+$) function, called tensor holomorphic functions. Thanks to an isomorphism between $M_{2^N\times 1}(\mathbb{H})$ and $\mathbb{C}^{\otimes N}\otimes\mathbb{H}$ (see Proposition \ref{pr-vt}), then we can define a $*$-product of stem holomorphic functions, induced by the product of tensor holomorphic functions. As a consequence, we can define a $*$-product of holomorphic stem systems (see Theorem \ref{th-sp}).

Now we consider the $*$-product of slice regular functions on Riemann slice-domains. In the classical case, the $*$-product of two slice regular functions on an axially symmetric slice domain in $\mathbb{H}$ is essentially the unique slice regular extension to the whole domain of their pointwise product on the real axis. We attempt to generalize this $*$-product to Riemann slice-domains. In this general case, we find a phenomenon that even if a Riemann slice-domain $(G,\pi,x_0)$ has certain symmtries, the pointwise product of the restrictions to $G^{x_0}_\mathbb{R}$ of two slice regular functions on $G$ may not necessarily extend regularly to the whole domain $G$; for the precise definitions of $(G,\pi,x_0)$ and $G^{x_0}_\mathbb{R}$, see \cite[Definition \ref{df-rsd}]{Dou2018001} and Definition \ref{df-psd}. In fact, it is a problem related to analytic continuations, which in the case of several complex variables initiates the theories of analytic spaces and sheaf cohomology. At present, this may be a hard problem. So we only consider some Riemann slice-domains on which the above phenomenon does not occur. Such Riemann slice-domains are called $*$-preserving (see Definition \ref{df-psd}). In particular, axially symmetric domains in $\mathbb{H}$ are $*$-preserving (see Remark \ref{rm-asp}). But it is hard to identify whether a Riemann slice-domain is $*$-preserving or not, even the Riemann slice-domain is a domain of existence of the square root function $f_\mathbb{R}$ (see Example \ref{ex-sr}).

As applications of the $*$-product for slice regular functions on $*$-preserving Riemann slice-domains, we can define the conjugates and symmetrizations of slice regular functions, as well as the inverses of some suitable slice regular functions (see Proposition \ref{pr-iesr}). We write out the power series expansions of slice regular functions   on $*$-preserving Riemann slice-domains (see Theorem \ref{th-t}) and give a relation between the power series expansions of stem holomorphic, tensor holomorphic and slice regular functions (see Theorem \ref{th-str}).

\section{Preliminaries}

In this section we collect some the basic definitions. We mention that for all those notations unexplained, we refer to our preceding article \cite{Dou2018001}.

\subsection{Slice-domains in $\mathbb{H}$}

In this subsection, we will recall some definitions on slice-domains in $\mathbb{H}$.

Let $\mathbb{S}$ be the 2-sphere of imaginary units of quaternions $\mathbb{H}$, i.e.,
\begin{equation*}
\mathbb{S}:=\{q\in\mathbb{H}:q^2=-1\}.
\end{equation*}
For each subset $U$ of $\mathbb{H}$ and $I\in\mathbb{S}$, we set
\begin{equation*}
\mathbb{C}_I:=\{x+yI:x,y\in\mathbb{R}\},\qquad U_\mathbb{R}:=U\cap\mathbb{R}\qquad\mbox{and}\qquad U_I:=U\cap\mathbb{C}_I.
\end{equation*}

\begin{defn}
	Let  $I\in\mathbb{S}$	and  $\Omega$  be an open set in $\mathbb{C}_I$.  A function $f:\Omega\rightarrow\mathbb{H}$ is said to be (left) holomorphic, if f has continuous partial derivatives and satisfies
	\begin{equation*}
	(\frac{\partial}{\partial x}+I \frac{\partial}{\partial y}) f(x+yI)=0,\qquad\mbox{on}\qquad\Omega.
	\end{equation*}
\end{defn}

Let $i$ ba a fixed imaginary unit in the complex field $\mathbb{C}$. We set
\begin{equation*}
	P_I:\mathbb{C}\rightarrow\mathbb{C}_I,\qquad x+yi\rightarrow x+yI,\qquad\forall\ x,y\in\mathbb{R}\ \mbox{and}\ I\in\mathbb{S}.
\end{equation*}

For each $x\in\mathbb{R}$, let
\begin{equation*}
\lfloor x\rfloor:=\max\{N\in\mathbb{Z}:N\le x\}\qquad(\mbox{resp.}\ \lceil x\rceil:=\min\{N\in\mathbb{Z}:N\ge x\})
\end{equation*}
be the floor (resp. ceiling) integral part of $x$, and
\begin{equation*}
\{x\}:=x-\lfloor x\rfloor
\end{equation*}
the fractional part of $x$.

For each $I\in\mathbb{S}$, let
\begin{equation*}
\mathbb{C}'_I:=(\mathbb{C}_I,I)=\{(z,I):z\in\mathbb{C}_I\}
\end{equation*}
be a field, which is isomorphic to $\mathbb{C}$.
Let $\tau(\mathbb{C}'_I)$ be the  induced topology on $\mathbb{C}'_I$ by $\mathbb{C}$,  and $\tau(\sqcup_{I\in\mathbb{S}} \mathbb{C}'_I)$ be the disjoint union topology. We set
\begin{equation*}
\varphi:\bigsqcup_{I\in\mathbb{S}} \mathbb{C}'_I\rightarrow\mathbb{H},\qquad (q,I)\mapsto q.
\end{equation*}
And let $\tau_s(\mathbb{H})$ be the quotient space topology induced by $\varphi$.

\begin{defn}\label{df-2st}
	We call the topology $\tau_s$, the slice-topology of $\mathbb{H}$.
\end{defn}

Open sets, connectedness and paths in the slice-topology are  called respectively as slice-open sets, slice-connectedness and slice-paths, and so on.

\begin{defn}\label{df-2sr}
	Let $\Omega$ be a slice-open set in $\mathbb{H}$. A function $f:\Omega\rightarrow\mathbb{H}$ is called (left) slice regular, if for each $I\in\mathbb{S}$, $f_I:=f|_{\Omega_I}$ is left holomorphic.
\end{defn}

\subsection{Riemann slice-domains over $\mathbb{H}$} In this subsection, we will recall some definitions on Riemann slice-domains over $\mathbb{H}$.

\begin{defn}\label{df-2rsd}
	A (Riemann) slice-domain over $\mathbb{H}$ is a pair $(G,\pi)$ with the following properties:
	
	1. $(G,\tau(G))$ is a connected Hausdorff   space,
	
	2. $\pi:G\rightarrow\mathbb{H}$ is a local slice-homeomorphism.
\end{defn}

\begin{defn}
	A (Riemann) slice-domain over $\mathbb{H}$ with distinguished point is a triple $\mathcal{G}=(G,\pi,x)$ for which $(G,\pi)$ is a slice-domain over $\mathbb{H}$ and $x\in G$.
\end{defn}

\begin{defn}
	Let $\mathcal{G}_\lambda=(G_\lambda,\pi_\lambda,x_\lambda)$, $\lambda=1,2$, be two slice-domains over $\mathbb{H}$ with distinguished point. We say that $\mathcal{G}_1$ is contained in $\mathcal{G}_2$ (denoted by $\mathcal{G}_1\prec\mathcal{G}_2$), if there exists a continuous map $\varphi:G_1\rightarrow G_2$ with the following properties:
	
	1. $\pi_2\circ\varphi=\pi_1$ (called ``$\varphi$ preserves fibers").
	
	2. $\varphi(x_1)=x_2$.
\end{defn}

\begin{defn}
	Two slice-domains $\mathcal{G}_1,\mathcal{G}_2$ over $\mathbb{H}$ with distinguished point are called equivalent (symbolically $\mathcal{G}_1\cong\mathcal{G}_2$), if $\mathcal{G}_1\prec\mathcal{G}_2$ and $\mathcal{G}_2\prec\mathcal{G}_1$. We denote the equivalence class of $\mathcal{G}_1$ by $[\mathcal{G}_1]$.
\end{defn}

\begin{defn}\label{eq-2lgb}
	Let $\Lambda$ be an index set, and $\mathcal{G}$, $\mathcal{G}_\lambda$, $\lambda\in\Lambda$, be slice-domains over $\mathbb{H}$ with distinguished point. $\mathcal{G}$ is called a upper bound of $\{\mathcal{G}_\lambda\}_{\lambda\in\Lambda}$, if $\mathcal{G}_\lambda\prec\mathcal{G}$ for each $\lambda\in\Lambda$.
\end{defn}

\begin{defn}\label{de-2sdui}
	Let $\Lambda$ be an index set, and $\mathcal{G}$, $\mathcal{G}_\lambda$, $\lambda\in\Lambda$ be slice-domains over $\mathbb{H}$ with distinguished point. $\mathcal{G}$ is called a union of $\{\mathcal{G}_\lambda\}_{\lambda\in\Lambda}$, if $\mathcal{G}\prec\mathcal{G}'$ for each upper bound $\mathcal{G}'$ of $\{\mathcal{G}_\lambda\}_{\lambda\in\Lambda}$.
	
	We denote the set of all unions of $\{\mathcal{G}_\lambda\}_{\lambda\in\Lambda}$ by $\cup_{\lambda\in\Lambda}\mathcal{G}_\lambda$.
\end{defn}

\begin{defn}  \label{def:2slicefunction}
	Let $(G,\pi)$ be a slice-domain over $\mathbb{H}$. A function $f:G\rightarrow\mathbb{H}$ is called slice regular at a point $x\in G$ if there exists an open neighborhood $U\subset G$ of $x$ and a slice-open set $V\subset\mathbb{H}$ such that $\pi|_U:U\rightarrow V$ is a slice-homeomorphism and $f\circ\pi|_U^{-1}:V\rightarrow\mathbb{H}$ is slice regular.
	
	The function $f$ is called slice regular on $G$ if $f$ is slice regular at every point $x\in G$. We denote the set of all slice regular functions on $G$ by $\mathcal{SR}(G)$.
\end{defn}

\begin{defn}
	Let $\mathcal{G_\lambda}=(G_\lambda,\pi_\lambda,x_\lambda)$, $\lambda=1,2$, be slice-domains over $\mathbb{H}$ with distinguished point, and $\mathcal{G}_1\prec\mathcal{G}_2$ by the fiber preserving mapping $\varphi:G_1\rightarrow G_2$ with $\varphi(x_1)=x_2$. For every function $f$ on $G_2$, we define
	\begin{equation*}
	f|_{G_1}:=f\circ\varphi.
	\end{equation*}
\end{defn}

\begin{defn}
	1. Let $(G,\pi)$ be a slice-domain over $\mathbb{H}$, and $x\in G$ be a point. If $f$ is a slice regular function   near $x$, then the pair $(f,x)$ is called a local slice regular function at $x$.
	
	2. Let $(G_1,\pi_1)$, $(G_2,\pi_2)$ be slice-domains over $\mathbb{H}$, and $x_\imath\in G_\imath$, $\imath=1,2$ with $\pi_1(x_1)=\pi_2(x_2)$. Two locally holomorphic functions $(f_1,x_1)$, $(f_2,x_2)$ are called equivalent if there exist an open neighborhood $U$ of $x_1$, an open neighborhood $V$ of $x_2$ and a slice domain $W$ in $\mathbb{H}$, such that
	\begin{equation*}
	\pi_1|_U:U\rightarrow W\qquad\mbox{and}\qquad\pi_2|_V:V\rightarrow W
	\end{equation*}
	are slice-homeomorphisms, and
	\begin{equation*}
	f_1\circ\pi_1|_U^{-1}=f_2\circ\pi_2|_V^{-1}.
	\end{equation*}
	
	3. The equivalence class of a local slice regular function $(f,x)$ is denoted by $f_x$.
\end{defn}

\begin{defn}
	Let $\mathcal{G}=(G,\pi,x)$ be a slice-domain over $\mathbb{H}$ with distinguished point, and $f$ be slice regular functions on $G$. We say that $f$ can extend slice regularly to a slice-domain $\breve{\mathcal{G}}=(\breve{G},\breve{\pi},\breve{x})$ over $\mathbb{H}$ with distinguished point, if $\mathcal{G}\prec \breve{\mathcal{G}}$ and there exists a slice regular function $F$ on $\breve G$ such that $F|_{\breve G}=f$. We say that $\breve{\mathcal G}$ is $\{f\}$-extendible.
	
	Let $\mathscr{G}$ be the system of all $\{f\}$-extendible slice-domains over $\mathbb{H}$ with distinguished point. We call
	\begin{equation*}
	H_f(\mathcal{G}):=\bigcup_{\mathcal{\breve G}\in\mathscr{G}}\mathcal{\breve G}
	\end{equation*}
	the set of slice-domains of existence of the function $f$ with respect to $\mathcal{G}$.
\end{defn}

\begin{defn}
	A slice-domain $\mathcal{G}=(G,\pi,x)$ over $\mathbb{H}$ with distinguished point is called a slice-domain of (slice) regularity if there exists a slice regular function $f$ on $G$ such that $\mathcal{G}$ is a slice-domain of existence of $f$ with respect to $\mathcal{G}$.
\end{defn}

\subsection{Finite-part paths} In this subsection, we will recall some definitions with respect to finite-part paths.

A path in a topological space $X$ is a continuous function $f$ from the unit interval $[0,1]$ to $X$. For each $N\in\mathbb{N}^+$, topological space $X$ and paths $\gamma_\imath$, $\imath=1,2,...,N$ in $X$, we denote the composition of paths $\{\gamma_\imath\}_{\imath=1}^N$ by
\begin{equation*}
\prod_{l=1}^N \gamma_l=\gamma_1\gamma_2....\gamma_N,
\end{equation*}
i.e.,
\begin{equation*}
(\prod_{l=1}^N\gamma_l)(t):=\left\{
\begin{split}
&\gamma_{\lfloor tN\rfloor}(\{tN\}), &&t\in[0,1),
\\&\gamma(1), &&t=1.
\end{split}
\right.
\end{equation*}

\begin{defn}
	Let $U$ be a subset of $\mathbb{H}$. A slice-path $\gamma$ in $\mathbb{H}$ is called slice preserving path in $U$, if there exists $I\in\mathbb{S}$ such that $\gamma([0,1])\subset U_I$.
\end{defn}

\begin{defn}
	Let $(G,\pi)$ be a slice-domain over $\mathbb{H}$. A path $\gamma$ in $G$ is called slice preserving, if $\pi\circ\gamma$ is a slice preserving path in $\mathbb{H}$.
\end{defn}

\begin{defn}\label{df-2npp}
	Let $N\in\mathbb{N}^+$ and $\gamma_\imath$ be a path in $\mathbb{C}$ for each $\imath\in\{1,2,...,N\}$.
	\begin{equation*}
	\gamma:=(\gamma_1,\gamma_2,...,\gamma_N)
	\end{equation*}
	is called an $N$-part path in $\mathbb{C}$, if
	\begin{equation*}
	\gamma_\imath(1)=\gamma_{\imath+1}(0)\in\mathbb{R},\qquad \imath=1,2,...,N-1.
	\end{equation*}
	
	We call $\gamma_1(0)$ the initial point of $\gamma$.
	
	For each $z\in\mathbb{C}$ and $N\in\mathbb{N}^+$, we denote by $\mathcal{P}^{\infty}_z(\mathbb{C})$ (resp. $\mathcal{P}^{N}_z(\mathbb{C})$) the set of all the finite-part (resp. $N$-part) paths in $\mathbb{C}$ with the initial point $z$.
\end{defn}

For each slice-domain $(G,\pi)$ over $\mathbb{H}$ (resp. slice-domain $(G,\pi,x)$ over $\mathbb{H}$ with distinguished point), $I\in\mathbb{S}$, and $U\subset G$, we set
\begin{equation*}
U_I:=\{q\in U:\pi(q)\in\mathbb{C}_I\}\qquad\mbox{and}\qquad U_\mathbb{R}:=\{q\in U:\pi(q)\in\mathbb{R}\}.
\end{equation*}

\begin{defn}
	Let $(G,\pi)$ be a slice-domain over $\mathbb{H}$, $N\in\mathbb{N}^+$ and $\gamma_\imath$ be a slice preserving path in $\mathbb{H}$ (resp. $G$) for each $\imath\in\{1,2,...,N\}$.
	\begin{equation*}
	\gamma:=(\gamma_1,\gamma_2,...,\gamma_N)
	\end{equation*}
	is called an $N$-part path in $\mathbb{H}$ (resp. $G$), if
	\begin{equation*}
	\gamma_\imath(1)=\gamma_{\imath+1}(0)\in\mathbb{R}\qquad(\mbox{resp.}\ \gamma_\imath(1)=\gamma_{\imath+1}(0)\in G_{\mathbb{R}}),\qquad \imath=1,2,...,N-1.
	\end{equation*}
\end{defn}

\begin{defn}
	Let $(G,\pi)$ be a slice-domain over $\mathbb{H}$, $N\in\mathbb{N}^+$ and $\gamma$ be the $N$-part path in $\mathbb{C}$ (resp. $\mathbb{H}$ or $G$). We set
	\begin{equation*}
	\gamma(t):=\left\{
	\begin{aligned}
	&\gamma_{\lfloor tN\rfloor+1}(\{tN\}),\qquad &&t\in[0,1),
	\\&\gamma_N(1), &&t=1.
	\end{aligned}
	\right.
	\end{equation*}
	We call $\gamma$ is from $\gamma(0)$ to $\gamma(1)$.
	
	The set of all $N$-part paths in $\mathbb{C}$ (resp. $\mathbb{H}$ or $G$) is denoted by $\mathcal{P}^N(\mathbb{C})$ (resp. $\mathcal{P}^N(\mathbb{H})$ or $\mathcal{P}^N(G)$). We define the set of finite-part paths in $\mathbb{C}$ (resp. $\mathbb{H}$ or $G$), by
	\begin{equation*}
	\mathcal{P}^{\infty}(\mathbb{C}):=\bigsqcup_{\imath\in\mathbb{N}^+} \mathcal{P}^\imath(\mathbb{C})\quad
	(\mbox{resp.}\ 	\mathcal{P}^{\infty}(\mathbb{H}):=\bigsqcup_{\imath\in\mathbb{N}^+} \mathcal{P}^\imath(\mathbb{H})\ or\ 
	\mathcal{P}^{\infty}(G):=\bigsqcup_{\imath\in\mathbb{N}^+} \mathcal{P}^\imath(G)).
	\end{equation*}
\end{defn}

\begin{defn}\label{df:2lni}
For each $N\in\mathbb{N}^+$, $I=(I_1,I_2,...I_N)\in\mathbb{S}^N$ and $\gamma=(\gamma_1,\gamma_2,...,\gamma_N)\in\mathcal{P}^N(\mathbb{C})$, we define a map
\begin{equation*}
\phi_I:\bigsqcup_{\imath\in\mathbb{N}^+}\mathcal{P}^\imath (\mathbb{C})\rightarrow\bigsqcup_{\imath\in\mathbb{N}^+}\mathcal{P}^\imath (\mathbb{H}),\qquad\gamma\mapsto(P_{I_1} (\gamma_1),P_{I_2} (\gamma_2),...,P_{I_N} (\gamma_N)).
\end{equation*}
We call $\phi_I(\gamma)$ the $I$-lifting of $\gamma$ to $\mathbb{H}$, denoted by $\gamma^I$.
\end{defn}

Let $(G,\pi)$ is a slice-domain over $\mathbb{H}$, we define a map $\pi:\mathcal{P}^{\infty} (G)\rightarrow\mathcal{P}^{\infty} (\mathbb H)$ by
\begin{equation*}
\pi(\beta):=(\pi(\beta_1),\pi(\beta_2),...,\pi(\beta_m)),\qquad\forall\ m\in\mathbb{N}^+\ \mbox{and}\ \beta\in\mathcal{P}^m(G).
\end{equation*}

\begin{defn}
	Let $N\in\mathbb{N}^+$, $\gamma$ be an $N$-part path (resp. path) in $\mathbb{H}$ and $\mathcal{G}=(G,\pi,x)$ be a slice-domain over $\mathbb{H}$ with distinguished point. We say that $\gamma$ is contained in $\mathcal{G}$ (denoted by $\gamma\prec\mathcal{G}$), if there exists an (unique) $N$-part path (resp. path) $\alpha$ in $G$ such that $\alpha(0)=x$ and $\pi(\alpha)=\gamma$.
	
	We call $\alpha$ the lifting of $\gamma$ to $\mathcal{G}$, denoted by $\gamma_{\mathcal{G}}$.
\end{defn}

Let $\mathcal{G}$ be a slice-domain over $\mathbb{H}$ with distinguished point, $N\in\mathbb{N}^+$, and $\gamma=(\gamma_1,\gamma_2,...,\gamma_N)$ be an $N$-part path in $\mathbb{C}$ (resp. $\mathbb{H}$ or $\mathcal{G}$). For each $t\in[0,1]$, we define a finite-part path $\gamma[t]$ in $\mathbb{C}$ (resp. $\mathbb{H}$ or $\mathcal{G}$) by
\begin{equation*}
\gamma[t]:=\left\{
\begin{split}
&(\gamma_1,\gamma_2,...,\gamma_{\lfloor Nt\rfloor},\gamma_{(Nt)}),\qquad&&t\in[0,1),\\
&\gamma,&&t=1,\\
\end{split}\right.
\end{equation*}
where $\gamma_{(Nt)}$ is the path in $\mathbb{C}$ (resp. $\mathbb{H}$ or $\mathcal{G}$), defined by
\begin{equation*}
\gamma_{(Nt)}(s):=\left\{
\begin{split}
&\gamma_{\lfloor Nt+1\rfloor}((Nt-\lfloor Nt\rfloor)s),\qquad&&Nt\neq\lfloor Nt\rfloor,\\
&\gamma(t),&&otherwise,\\
\end{split}\right.\qquad\forall\ s\in[0,1].
\end{equation*}
Let $\gamma[t^-]$ be a finite-part path in $\mathbb{C}$, defined by
\begin{equation*}
\gamma[t^-]:=\left\{
\begin{split}
&(\gamma_1,\gamma_2,...,\gamma_{tN}),\qquad&&t\in\{\frac{1}{N},\frac{2}{N},...,\frac{N}{N}\},\\
&\gamma[t],&&t\in[0,1]\backslash\{\frac{1}{N},\frac{2}{N},...,\frac{N}{N}\}.\\
\end{split}\right.
\end{equation*}

\subsection{Technical notations}

In this subsection, we will recall some technical notations.

For each $N\in\mathbb{N}^+$, $I=(I_1,I_2,...,I_N)\in\mathbb{S}^N$ and $m\in\{1,2,...,2^N\}$, we set
\begin{equation}\label{eq-2im}
I(m):=\prod_{\imath=N}^1 (I_\imath I_{\imath-1})^{m_\imath}=(I_N I_{N-1})^{m_N}(I_{N-1} I_{N-2})^{m_{N-1}}...(I_{1} I_{0})^{m_{1}},
\end{equation}
where $I_{0}:=1$ and $(m_N m_{N-1} ... m_1)_2$ is the binary number of $m-1$.

We define a map
\begin{equation*}
\zeta:\bigsqcup_{\imath\in\mathbb{N}}\mathbb{S}^\imath\rightarrow\bigsqcup_{\imath\in\mathbb{N}}\mathbb{H}^{2^\imath},\qquad I\mapsto(I(1),I(2),...,I(2^\jmath)),\qquad\forall\ \jmath\in\mathbb{N}^+\ \mbox{and}\ I\in\mathbb{S}^\jmath.
\end{equation*}

For each set $A$ and $\imath,\jmath\in\mathbb{N}^+$, we denote the set of all $\imath\times\jmath$ matrices of $A$ by $M_{\imath\times\jmath}(A)$, and the set of all $\imath\times\imath$ matrices of $A$ by $M_\imath(A)$. We denote the $\imath\times\imath$ identity matrix by $\mathbb{I}_\imath$, and the $\imath\times\imath$ zero matrix by $0_\imath$ for each $\imath\in\mathbb{N}^+$.

For each matrix $E$, we denote the transpose of $E$ by $E^T$.

For each $\imath\in\mathbb{N}^+$, we say that $A\in M_\imath(\mathbb{H})$ is invertible, if there exists a matrix $B\in M_\imath(\mathbb{H})$, such that $AB=BA=\mathbb{I}_\imath$.

For each $n,m\in\mathbb{N}^+$, $A=\{a_{\imath,\jmath}\}_{n\times m}\in M_{n\times m}(\mathbb{H})$ and $q\in\mathbb{H}$, we set
\begin{equation*}
qA:=\{q\cdot a_{\imath,\jmath}\}_{n\times m}\qquad\mbox{and}\qquad Aq:=\{a_{\imath,\jmath}\cdot q\}_{n\times m}.
\end{equation*}

\begin{defn}\label{df-2fsr}
	Let $N\in\mathbb{N}^+$ and $J=(J_{\imath,\jmath})_{2^N\times N}\in M_{2^N\times N}(\mathbb{S})$. We call $J$ has full slice-rank, if $\mathcal{M}(J^{(\imath)})$ is invertible for each $\imath\in\{1,2,...,N\}$
\end{defn}

For each $N\in\mathbb{N}^+$ and $J=\{J_{\imath,\jmath}\}_{2^N\times N}\in M_{2^N\times N}(\mathbb{S})$, we denote the $\imath$-th row vector of $J$ by
\begin{equation*}
J_\imath:=(J_{\imath,1},J_{\imath,2},...,J_{\imath,N})\in\mathbb{S}^N,\qquad\forall\ \imath\in\{1,2,...,2^N\}.
\end{equation*}
For each $l\in\{1,2,...,N\}$ and $\imath\in\{1,2,...,2^N\}$, we set
\begin{equation*}
J^{(l)}:=\{J_{\imath,\jmath}\}_{2^l\times l}\qquad\mbox{and}\qquad J_\imath^{(l)}:=(J_{\imath,1},J_{\imath,2},...,J_{\imath,l}).
\end{equation*}
We define a map
\begin{equation*}
\mathcal{M}:\bigsqcup_{\imath\in\mathbb{N}^+}M_{2^\imath\times\imath}(\mathbb{S})\rightarrow \bigsqcup_{\imath\in\mathbb{N}^+}M_{2^\imath}(\mathbb{H})
\end{equation*}
by
\begin{equation*}
\mathcal{M}(K):=(\zeta(K_1)^T,\zeta(K_2)^T,...,\zeta(K_{2^\jmath})^T)^T
\end{equation*}
for each $\jmath\in\{1,2,...,N\}$ and $K\in M_{2^\jmath\times\jmath}(\mathbb{S})$.

\section{Slice-linearly independent}\label{sc-2sli}

The representation formula over slice-domains of regularity \cite[Theorem \ref{th-dere}]{Dou2018001} demands $J\in M_{2^N\times N}(\mathbb{S})$ with full slice-rank (see Definition \ref{df-2fsr}). In this section, we prove Proposition \ref{pr-tfb} to provide a weak condition, so-called left slice-linearly independent (see Definition \ref{df-sli}), for a general representation formula over Riemann slice-domains (see Theorem \ref{tm-rf}).

\begin{defn}
	Let $N\in\mathbb{N}^+$, and $v_\imath\in\mathbb{H}^N$, $\imath=1,2,...,N$. The set of vectors $\{v_\imath\}_{\imath=1}^N$ is called left (resp. right) linearly independent, if the equation
	\begin{equation*}
	\sum_{\imath=1}^N q_\imath v_\imath=0\qquad (\mbox{resp.}\ \sum_{\imath=1}^N v_\imath q_\imath=0)
	\end{equation*}
	can only be satisfied by $q_\imath=0$, $\imath=1,2,...,N$, where $q_\imath\in\mathbb{H}$, $\imath=1,2,...,N$.
\end{defn}

The rank of a quaternion matrix $A$ is defined to be the maximum number of columns of $A$ which are right linearly independent.

\begin{prop}\label{pr-qmi}
	(\cite[Page 43]{Zhang1997001})
	If a quaternion matrix $A$ is of rank $\imath\in\mathbb{N}^+$, then $\imath\in\mathbb{N}^+$ is also the maximum number of rows of $A$ that are left linearly independent.
\end{prop}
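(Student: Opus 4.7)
The plan is to reduce the claim to the equality of row rank and column rank for complex matrices via the standard complex adjoint of a quaternion matrix. Writing $\mathbb{H}=\mathbb{C}\oplus\mathbb{C}j$ and decomposing $A\in M_{n\times m}(\mathbb{H})$ as $A=A_1+A_2j$ with $A_1,A_2\in M_{n\times m}(\mathbb{C})$, I would define the complex adjoint
\[
\chi(A):=\begin{pmatrix} A_1 & -A_2\\ \overline{A_2} & \overline{A_1}\end{pmatrix}\in M_{2n\times 2m}(\mathbb{C}).
\]

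The first step is to prove $\mathrm{rank}_{\mathbb{C}}\chi(A)=2r_c$, where $r_c$ denotes the right column rank of $A$. A direct computation shows that the $\mathbb{R}$-linear bijection
\[
\mathbb{H}^m\to\mathbb{C}^{2m},\qquad \alpha+\beta j\mapsto(\alpha,\overline{\beta}),
\]
restricts to a $\mathbb{C}$-linear isomorphism between the right $\mathbb{H}$-null space $\{q\in\mathbb{H}^m:Aq=0\}$ and $\ker\chi(A)$: expanding $Aq=0$ into the pair of block equations $A_1\alpha-A_2\bar\beta=0$ and $A_1\beta+A_2\bar\alpha=0$, then conjugating the second, one sees these are exactly the two components of $\chi(A)(\alpha,\bar\beta)^T=0$. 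Since the right null space has $\mathbb{H}$-dimension $m-r_c$ by rank-nullity over the division algebra $\mathbb{H}$, its $\mathbb{C}$-dimension is $2(m-r_c)$, which yields $\mathrm{rank}_{\mathbb{C}}\chi(A)=2r_c$.

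In the second step, I would verify the identity $\chi(A^*)=\chi(A)^*$ by a direct block matrix computation using $\overline{z+wj}=\bar z-wj$. Moreover, conjugating a left linear relation $\sum_\imath p_\imath A_{\imath,\cdot}=0$ and using $\overline{ab}=\bar b\bar a$ in $\mathbb{H}$ shows that the rows of $A$ are left linearly independent precisely when the columns of $A^*$ are right linearly independent; hence the left row rank $r_r$ of $A$ equals the right column rank of $A^*$. Applying the first step to $A^*$ and invoking the classical equality of row and column ranks for complex matrices then gives
\[
2r_r=\mathrm{rank}_{\mathbb{C}}\chi(A^*)=\mathrm{rank}_{\mathbb{C}}\chi(A)=2r_c,
\]
which completes the proof.

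The main obstacle is the verification in the first step that the bijection $\alpha+\beta j\mapsto(\alpha,\overline{\beta})$ really does intertwine the quaternionic system $Aq=0$ with the complex system $\chi(A)v=0$, together with the fact that this bijection is $\mathbb{C}$-linear with respect to the right $\mathbb{C}$-structure on $\mathbb{H}^m$. Once this correspondence is set up, the factor of two emerges naturally from $\dim_{\mathbb{C}}\mathbb{H}=2$, and the remaining steps are routine rank arithmetic together with the symmetry relation $\chi(A^*)=\chi(A)^*$.
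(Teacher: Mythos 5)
Your proof is correct. Note first that the paper does not prove this statement at all: Proposition \ref{pr-qmi} is imported verbatim from Zhang's survey on quaternion matrices, with only a page citation and no argument, so there is no in-paper proof to compare against. Your complex-adjoint route is a legitimate, self-contained proof. The key computations all check out: with $q=\alpha+\beta j$ one has $Aq=(A_1\alpha-A_2\overline{\beta})+(A_1\beta+A_2\overline{\alpha})j$, and conjugating the second component gives exactly the two block rows of $\chi(A)(\alpha,\overline{\beta})^T$; the map $\alpha+\beta j\mapsto(\alpha,\overline{\beta})$ is right-$\mathbb{C}$-linear since $(\alpha+\beta j)c=\alpha c+\beta\overline{c}\,j\mapsto(\alpha c,\overline{\beta}c)$; and a direct block computation confirms $\chi(A^*)=\chi(A)^*$ for your sign convention, since $(A^*)_1=A_1^*$ and $(A^*)_2=-A_2^T$. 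Combined with rank--nullity over the division ring $\mathbb{H}$ (which identifies the column rank with $m-\dim_{\mathbb{H}}\ker$) and the conjugation bijection between left row relations of $A$ and right column relations of $A^*$, the chain $2r_r=\operatorname{rank}_{\mathbb{C}}\chi(A^*)=\operatorname{rank}_{\mathbb{C}}\chi(A)=2r_c$ closes the argument. For comparison, the standard treatment in the cited source proceeds instead by reducing $A$ to a canonical form $P\,\operatorname{diag}(\mathbb{I}_r,0)\,Q$ with $P,Q$ invertible via elementary row and column operations, for which both independence counts are visibly $r$ and both are invariant under the operations; your approach trades that normal-form argument for the classical row-rank-equals-column-rank theorem over $\mathbb{C}$, and yields the useful byproduct that the quaternionic rank is $\tfrac12\operatorname{rank}_{\mathbb{C}}\chi(A)$.
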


\begin{prop}\label{pr-aii}
	(\cite[Page 43]{Zhang1997001})
	Let $\imath\in\mathbb{N}^+$, if $A\in M_{\imath\times\imath}(\mathbb{H})$, then $A$ is invertible if and only if $A$ is of (full) rank $\imath$.
\end{prop}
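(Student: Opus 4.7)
The plan is to establish both implications using standard linear algebra adapted to the right $\mathbb{H}$-module structure, leveraging Proposition \ref{pr-qmi} to bridge row-rank and column-rank notions.

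For the forward implication, suppose $A$ is invertible with inverse $B$. Denote the columns of $A$ by $A_{\cdot,1},\ldots,A_{\cdot,\imath}$ and suppose $\sum_{j=1}^{\imath} A_{\cdot,j} q_j = 0$ for some $q_j \in \mathbb{H}$. Writing this as $A v = 0$ for $v = (q_1,\ldots,q_\imath)^T$ and multiplying on the left by $B$ yields $v = BAv = 0$, so every $q_j = 0$. Hence the $\imath$ columns of $A$ are right linearly independent, i.e.\ $A$ has full rank $\imath$.

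For the reverse implication, assume $A$ has rank $\imath$. Since the right $\mathbb{H}$-module $\mathbb{H}^\imath$ is free of dimension $\imath$ (a consequence of $\mathbb{H}$ being a division ring, which gives invariance of dimension for free modules), the $\imath$ right linearly independent columns of $A$ form a basis of $\mathbb{H}^\imath$. Therefore each standard basis vector $e_j$ can be written uniquely as $e_j = \sum_{k} A_{\cdot,k} b_{kj}$, producing a matrix $B = (b_{kj}) \in M_\imath(\mathbb{H})$ with $AB = \mathbb{I}_\imath$. To obtain the two-sided inverse, I would apply Proposition \ref{pr-qmi}: full rank $\imath$ also means the $\imath$ rows of $A$ are left linearly independent, hence form a basis of the left $\mathbb{H}$-module of row vectors, and the dual construction yields $C \in M_\imath(\mathbb{H})$ with $CA = \mathbb{I}_\imath$. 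The two one-sided inverses coincide by associativity: $C = C(AB) = (CA)B = B$, so $BA = AB = \mathbb{I}_\imath$ and $A$ is invertible.

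The main obstacle is the basis step in the reverse direction: one must know that $\imath$ right linearly independent vectors in $\mathbb{H}^\imath$ automatically span. This invariance of dimension holds because $\mathbb{H}$ is a division ring, and can be proved via a quaternionic Steinitz exchange argument or, more concretely, by Gaussian elimination on the column vectors using right multiplication by invertible quaternions and column operations. Once this dimensional fact is in hand, the remaining steps are purely formal module-theoretic manipulations, and the symmetry between left and right provided by Proposition \ref{pr-qmi} removes any asymmetry between the construction of $B$ and $C$.
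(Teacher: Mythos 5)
Your proof is correct, but note that the paper itself offers no proof of this proposition at all: it is quoted verbatim from Zhang's survey on quaternionic matrices (the citation on page 43 of that reference), so there is no in-paper argument to compare yours against. Your two directions are both sound. The forward implication is the standard one-line argument $Av=0\Rightarrow v=BAv=0$, which matches the paper's definition of rank as the maximal number of right linearly independent columns. For the converse, you correctly identify the only nontrivial point, namely that $\imath$ right linearly independent vectors in $\mathbb{H}^\imath$ must span; this does hold because $\mathbb{H}$ is a division ring, so $\mathbb{H}^\imath$ is a free right module with invariant basis number and the usual exchange/elimination arguments from vector spaces go through. Your use of Proposition \ref{pr-qmi} to get the left-sided inverse from the left linear independence of the rows, followed by $C=C(AB)=(CA)B=B$, is a clean way to obtain a genuine two-sided inverse, which is needed since the paper's definition of invertibility explicitly demands $AB=BA=\mathbb{I}_\imath$. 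In short: the proposal is a complete and correct proof of a statement the paper merely imports.
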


\begin{defn}\label{df-sli}
	We say that $J\in M_{2^N\times N}(\mathbb{S})$ is left (resp. right) slice-linearly independent, if $\{\zeta(J_\imath)\}_{\imath=1}^{2^N}$ are left (resp. right) linearly independent. 
\end{defn}

\begin{prop}\label{pr-tfb}
	Let $N\in\mathbb{N}^+$, and $J\in M_{2^N\times N}(\mathbb{S})$. If $J\in M_{2^N\times N}(\mathbb{S})$ is left slice-linearly independent, then there exists a permutation $\sigma$ in the symmetric group of degree $2^N$, $S_{2^N}$, such that
	\begin{equation*}
	((J_{\sigma(1)})^T,(J_{\sigma(2)})^T,...,(J_{\sigma(2^N)})^T)^T
	\end{equation*}
	has full slice-rank.
\end{prop}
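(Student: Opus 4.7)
The plan is to translate the statement into a purely linear-algebraic claim about choosing nested row subsets of an invertible quaternion matrix, and then to build the desired permutation by downward induction on $l$ from $N$ to $1$.

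First I would set up the bridge between $\mathcal{M}(J)$ and its initial submatrices $\mathcal{M}(J^{(l)})$. Observe that by \eqref{eq-2im}, for any $m\in\{1,2,\ldots,2^l\}$ the binary digits $m_\jmath$ of $m-1$ vanish for $\jmath>l$, so $I(m)=\prod_{\jmath=l}^{1}(I_\jmath I_{\jmath-1})^{m_\jmath}$ depends only on $(I_1,\ldots,I_l)$. Applied to $I=J_\imath$, this gives $J_\imath(m)=J_\imath^{(l)}(m)$ for all $\imath,m\le 2^l$. Consequently $\mathcal{M}(J^{(l)})$ is exactly the top-left $2^l\times 2^l$ submatrix of $\mathcal{M}(J)$. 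After permuting the rows of $J$ by $\sigma$, $\mathcal{M}(J^{(l)})$ becomes the submatrix with rows $\sigma(1),\ldots,\sigma(2^l)$ and columns $1,\ldots,2^l$. The goal therefore reduces to producing nested subsets $R_1\subset R_2\subset\cdots\subset R_N=\{1,\ldots,2^N\}$ with $|R_l|=2^l$ so that the submatrix of $\mathcal{M}(J)$ indexed by rows $R_l$ and columns $\{1,\ldots,2^l\}$ is invertible for each $l$; any $\sigma$ with $\sigma(\{1,\ldots,2^l\})=R_l$ then works.

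Next I would use the hypothesis that $J$ is left slice-linearly independent. By Definition \ref{df-sli} the rows $\zeta(J_1),\ldots,\zeta(J_{2^N})$ of $\mathcal{M}(J)$ are left linearly independent, so by Proposition \ref{pr-qmi} the matrix $\mathcal{M}(J)$ has full rank $2^N$, and by Proposition \ref{pr-aii} it is invertible. This gives the base of the downward induction by setting $R_N:=\{1,\ldots,2^N\}$.

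For the inductive step, suppose $R_{l+1}\subset\{1,\ldots,2^N\}$ of size $2^{l+1}$ has been chosen so that the $R_{l+1}\times\{1,\ldots,2^{l+1}\}$ submatrix $A$ of $\mathcal{M}(J)$ is invertible. The first $2^l$ columns of $A$ form a $2^{l+1}\times 2^l$ matrix $B$ whose columns are right linearly independent (they are a subset of the columns of the invertible $A$, which are right linearly independent by Proposition \ref{pr-aii}). Thus $B$ has rank $2^l$, so by Proposition \ref{pr-qmi} some $2^l$ rows of $B$ are left linearly independent; take $R_l\subset R_{l+1}$ to be the indices of those rows. The resulting $2^l\times 2^l$ matrix has $2^l$ left linearly independent rows, hence full rank by Proposition \ref{pr-qmi}, hence is invertible by Proposition \ref{pr-aii}. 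This completes the induction, and any permutation $\sigma\in S_{2^N}$ compatible with the chain $R_1\subset R_2\subset\cdots\subset R_N$ yields the conclusion.

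The main obstacle is conceptual rather than computational: one must recognize that the ``full slice-rank'' condition concerns a \emph{chain} of submatrices sharing a common set of rows, and that a single permutation must handle every level simultaneously. The key trick is to decouple rows from columns and build the row-chain top-down, where at each step the invertibility of the larger block forces rank $2^l$ on its first $2^l$ columns, which via Propositions \ref{pr-qmi} and \ref{pr-aii} supplies a compatible smaller block. The verification that $\mathcal{M}(J^{(l)})$ sits as the leading principal submatrix of $\mathcal{M}(J)$, via the binary truncation argument above, is the other technical point that needs careful handling but is otherwise elementary.
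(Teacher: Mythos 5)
Your proposal is correct and is essentially the paper's argument: both rest on the same chain (left slice-linear independence $\Rightarrow$ $\mathcal{M}(J)$ invertible via Propositions \ref{pr-qmi} and \ref{pr-aii} $\Rightarrow$ its first $2^l$ columns are right linearly independent $\Rightarrow$ that block has rank $2^l$ $\Rightarrow$ by Proposition \ref{pr-qmi} one can select $2^l$ left linearly independent rows), with your downward construction of nested row sets $R_1\subset\cdots\subset R_N$ being exactly the unrolled form of the paper's upward induction on $N$. Your explicit verification that $\mathcal{M}(J^{(l)})$ is the leading $2^l\times 2^l$ submatrix of $\mathcal{M}(J)$ is a point the paper leaves implicit, and your phrasing avoids the paper's need to strengthen the induction hypothesis so that the permutation fixes the indices $2^k+1,\ldots,2^{k+1}$, but these are presentational rather than substantive differences.
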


\begin{proof}
	We will prove this proposition by induction. Obviously, Proposition \ref{pr-tfb} holds for $N=1$. If Proposition \ref{pr-tfb} holds for $N=k$ with $k\in\mathbb{N}^+$, we will prove that Proposition \ref{pr-tfb} also holds for $N=k+1$.
	
	Since $J$ is left slice-linearly independent, $\{\zeta(J_\imath)\}_{\imath=1}^{2^{k+1}}$ (the rows of  $\mathcal{M}(J)$) are left linearly independent. And thanks to Propositions \ref{pr-qmi} and \ref{pr-aii}, the rank of $\mathcal{M}(J)$ is $2^{k+1}$ (full rank) and $\mathcal{M}(J)$ is invertible. Therefore the columns of $\mathcal{M}(J)$ are right linearly independent, and the first $2^K$ columns of $\mathcal{M}(J)$ are also right linearly independent. It follows that, the rank of
	\begin{equation*}
	A:=(\zeta(J_1^{(k)})^T,\zeta(J_2^{(k)})^T,...,\zeta(J_{2^{k+1}}^{(k)})^T)^T
	\end{equation*}
	is $2^K$. And according to Proposition \ref{pr-qmi}, the maximum number of rows of $A$, that are left linearly independent, is $2^K$. Thence there exists a permutation $\rho$ in the symmetric group of degree $2^{k+1}$, such that the rows of
	\begin{equation*}
	B:=(\zeta(J_{\rho(1)}^{(k)})^T,\zeta(J_{\rho(2)}^{(k)})^T,...,\zeta(J_{\rho(2^k)}^{(k)})^T)
	\end{equation*}
	are left linearly independent. It follows that
	\begin{equation*}
	((J_{\rho(1)}^{(k)})^T,(J_{\rho(2)}^{(k)})^T,...,(J_{\rho(2^{k})}^{(k)})^T)^T
	\end{equation*}
	is left slice-linearly independent. By induction hypothesis, there exists a permutation $\omega$ in the symmetric group of degree $2^{k+1}$ such that
	\begin{equation*}
	((J_{\omega\circ\rho(1)}^{(k)})^T,(J_{\omega\circ\rho(2)}^{(k)})^T,...,(J_{\omega\circ\rho(2^{k})}^{(k)})^T)^T
	\end{equation*}
	has full slice-rank with $w(\imath)=\imath$ for each $\imath\in\{2^k+1,2^K+2,...,2^{k+1}\}$. Then
	\begin{equation*}
	\sigma=\omega\circ\rho
	\end{equation*}
	is the permutation in the symmetric group of degree $2^{k+1}$ such that
	\begin{equation*}
	((J_{\sigma(1)})^T,(J_{\sigma(2)})^T,...,(J_{\sigma(2^{k+1})})^T)^T
	\end{equation*}
	has full slice-rank.
\end{proof}

\section{Representation Formula}\label{sc-2rf}

In this section, we generalize the representation formula \cite[Theorem \ref{th-dere}]{Dou2018001} to Riemann slice-domains. We discover an invariant from the representation formula (see Theorem \ref{tm-rf}), which does not depend on the choice of the left slice-linearly independent matrix. This invariant will be treated as a stem holomorphic function in Section \ref{sc-hss} for introducing the $*$-product over some suitable Riemann slice domains. And then we get some extension results of slice regular function on Riemann slice-domains over $\mathbb{H}$, which indicate some symmetry of slice-domains of existence.

\begin{thm}\label{tm-rf} 
	(Representation Formula)
	Let $N\in\mathbb{N}^+$, $J\in M_{2^N\times N}(\mathbb{S})$ be a left slice-linearly independent matrix, $\mathcal{G}=(G,\pi,x)$ be a slice-domain over $\mathbb{H}$ with distinguished point with $\pi(x)\in\mathbb{R}$, and $\gamma$ be an N-part path in $\mathbb{C}$. If
	\begin{equation*}
	\gamma^{J_\imath}\prec\mathcal{G},\qquad\imath=1,2,...,2^N,
	\end{equation*}
	$f$ is a slice regular function on $G$, and $K\in\mathbb{S}^N$ with $\gamma^K\prec\mathcal{G}$, then
	\begin{equation}\label{eq-mrk}
	f(\gamma^K_{\mathcal{G}}(1))=\zeta(K)\mathcal{M}(J)^{-1}f(\gamma^J_{\mathcal{G}}(1)),
	\end{equation}
	where
	\begin{equation*}
	f(\gamma^J_{\mathcal{G}}(1)):=(f(\gamma^{J_1}_{\mathcal{G}}(1)),f(\gamma^{J_2}_{\mathcal{G}}(1)),...,\gamma^{J_{2^N}}_{\mathcal{G}}(1))^T.
	\end{equation*}
	
	Moreover,
	\begin{equation}\label{eq-gfg}
	{\mathcal{G}}^f_{\gamma}:=\mathcal{M}(J)^{-1}f(\gamma^J_{\mathcal{G}}(1))\in M_{1\times 2^N}(\mathbb{H})
	\end{equation}
	does not depend on the choice of the left slice-linearly independent matrix $J\in M_{2^N\times N}(\mathbb{S})$ with $\gamma^{J_\imath}\prec\mathcal{G}$, $\imath=1,2,...,2^N$.
\end{thm}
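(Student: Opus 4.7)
My plan is to reduce the formula \eqref{eq-mrk} to the representation formula for slice-domains of regularity \cite[Theorem \ref{th-dere}]{Dou2018001}, which treats the full slice-rank case, and then to derive the invariance of $\mathcal{G}^f_\gamma$ as a direct consequence of \eqref{eq-mrk}.

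First I would observe that both \eqref{eq-mrk} and the object in \eqref{eq-gfg} are invariant under row permutations of $J$. Explicitly, for $\sigma\in S_{2^N}$ let $J^\sigma$ denote the matrix with rows $(J^\sigma)_\imath = J_{\sigma(\imath)}$ and let $P_\sigma$ be the associated $2^N\times 2^N$ permutation matrix. Then $\mathcal{M}(J^\sigma) = P_\sigma \mathcal{M}(J)$ and $f(\gamma^{J^\sigma}_\mathcal{G}(1)) = P_\sigma f(\gamma^J_\mathcal{G}(1))$, so the factors $P_\sigma$ and $P_\sigma^{-1}$ cancel in $\mathcal{M}(J^\sigma)^{-1}f(\gamma^{J^\sigma}_\mathcal{G}(1))$. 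Combined with Proposition \ref{pr-tfb}, this lets us replace $J$ by a row permutation having full slice-rank, so without loss of generality $J$ itself has full slice-rank.

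Having made this reduction, I would invoke \cite[Theorem \ref{th-dere}]{Dou2018001} to establish \eqref{eq-mrk}. Although that result is stated under a slice-domain-of-regularity hypothesis, inspection of its proof indicates that the only ingredients actually used are the existence of the lifts $\gamma^{J_\imath}_\mathcal{G}$ and $\gamma^K_\mathcal{G}$ together with the slice regularity of $f$ on $G$; the argument proceeds by iteratively applying the classical two-imaginary-unit representation formula at each of the $N$ real-axis junctions of the path $\gamma$, and the regularity of $\mathcal{G}$ itself plays no role in that step. Hence the proof transports verbatim to the Riemann slice-domain setting. For the invariance of $\mathcal{G}^f_\gamma$, let $J'\in M_{2^N\times N}(\mathbb{S})$ be another left slice-linearly independent matrix with $\gamma^{J'_\imath}\prec\mathcal{G}$ for every $\imath$. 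Applying \eqref{eq-mrk} with $K = J'_\imath$ for each $\imath = 1,2,\ldots,2^N$ and stacking the resulting identities into a column yields
\begin{equation*}
f(\gamma^{J'}_\mathcal{G}(1)) = \mathcal{M}(J')\,\mathcal{M}(J)^{-1}\,f(\gamma^J_\mathcal{G}(1)),
\end{equation*}
which rearranges to $\mathcal{M}(J')^{-1}f(\gamma^{J'}_\mathcal{G}(1)) = \mathcal{M}(J)^{-1}f(\gamma^J_\mathcal{G}(1))$, as required.

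The main obstacle will be the claim in the third paragraph that the slice-domain-of-regularity hypothesis is inessential in the proof of \cite[Theorem \ref{th-dere}]{Dou2018001}. If that hypothesis turns out to be genuinely needed there, the fallback plan is an independent induction on $N$: the base case $N=1$ is the classical two-imaginary-unit representation formula lifted through $\pi$, using that the three endpoints of $\gamma^{J_1}_\mathcal{G}$, $\gamma^{J_2}_\mathcal{G}$, $\gamma^K_\mathcal{G}$ are reached from the common basepoint $x$ via slice preserving lifts; the inductive step splits the $(N{+}1)$-part path $\gamma$ at its first junction $\gamma_1(1)\in\mathbb{R}$, applies the $N$-part formula to the tail $(\gamma_2,\ldots,\gamma_{N+1})$ with distinguished point chosen as the lift of $\gamma_1(1)$, and then recombines via the base case. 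The combinatorial bookkeeping is carried entirely by the identity $I(m) = \prod_{\imath=N}^{1}(I_\imath I_{\imath-1})^{m_\imath}$ of \eqref{eq-2im}, which is exactly the tensor structure produced by $N$ successive binary branchings.
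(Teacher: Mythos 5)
Your reduction to the full slice-rank case via Proposition \ref{pr-tfb} and a row permutation, and your derivation of the invariance of $\mathcal{G}^f_\gamma$ by applying \eqref{eq-mrk} with $K=J'_\imath$ and stacking, both agree with the paper. The gap is in the central step: you justify invoking \cite[Theorem \ref{th-dere}]{Dou2018001} on $\mathcal{G}$ itself by asserting that its slice-domain-of-regularity hypothesis ``plays no role'' and the proof ``transports verbatim.'' It does play a role, and it is precisely what makes the present theorem nontrivial. Slice-domains of regularity are slice-symmetric (Proposition \ref{pr-sdhss}): once the $2^N$ independent lifts of $\gamma$ exist, \emph{all} lifts $\gamma^K$ exist, and the various lifts of each real junction $\gamma_m(1)$ arising from these paths carry a coherent germ of $f$. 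In a general Riemann slice-domain the hypothesis hands you only the $2^N+1$ specific lifts $\gamma^{J_\imath}_{\mathcal{G}}$ and $\gamma^K_{\mathcal{G}}$; at a junction these paths may land on distinct points of $G_{\mathbb{R}}$ lying over the same real number, with no a priori relation between the germs of $f$ there. The two-imaginary-unit formula relates values on a single $2$-sphere, and over a Riemann slice-domain the relevant lifted $2$-spheres need not cohere. Your fallback induction founders on the same point: ``the lift of $\gamma_1(1)$'' is not well defined, since the $2^N$ first segments may end at different preimages of $\gamma_1(1)$, and the recombination step needs mixed lifts (first segment along one unit, tail along others) whose existence in $G$ is not part of the hypothesis.

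The idea you are missing is the paper's one extra move: pass to a slice-domain of existence $\widehat{\mathcal{G}}=(\widehat G,\widehat\pi,\widehat x)$ of $f$ with respect to $\mathcal{G}$ (\cite[Theorem \ref{th-exsd}]{Dou2018001}). This $\widehat{\mathcal{G}}$ \emph{is} a slice-domain of regularity, hence slice-symmetric, so \cite[Theorem \ref{th-dere}]{Dou2018001} legitimately applies there to the extension $\widetilde f$ with the permuted full-slice-rank matrix $J_\sigma$; and the fiber-preserving map $\varphi:G\rightarrow\widehat G$ carries $\gamma^K_{\mathcal{G}}$ to $\gamma^K_{\widehat{\mathcal{G}}}$, giving $f(\gamma^K_{\mathcal{G}}(1))=\widetilde f(\gamma^K_{\widehat{\mathcal{G}}}(1))$ for every $K$ occurring in the statement. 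Expressing each $\widetilde f(\gamma^{J_\imath}_{\widehat{\mathcal{G}}}(1))$ through $\mathcal{M}(J_\sigma)^{-1}\widetilde f(\gamma^{J_\sigma}_{\widehat{\mathcal{G}}}(1))$ and cancelling then yields \eqref{eq-mrk} back on $\mathcal{G}$. With this insertion in place of your third paragraph, your argument becomes the paper's proof.
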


\begin{proof}
	1. Thanks to Proposition \ref{pr-tfb}, there exists $\sigma\in S_{2^N}$ such that
	\begin{equation*}
	J_\sigma:=(J_{\sigma(1)}^T,J_{\sigma(2)}^T,...,J_{\sigma(2^N)}^T)^T
	\end{equation*}
	has full slice-rank. Let
	\begin{equation*}
	\widehat{\mathcal{G}}:=(\widehat{G},\widehat{\pi},\widehat{x})
	\end{equation*}
	be a slice-domain of existence of $f$ with respect $\mathcal{G}$. According to \cite[Theorem \ref{th-exsd}]{Dou2018001}, we have
	\begin{equation*}
	\mathcal{G}\prec\widehat{\mathcal{G}}\qquad\mbox{and}\qquad\widetilde{f}|_G=f.
	\end{equation*}
	Let $\varphi:G\rightarrow \widehat{G}$ be the fiber preserving map from $\mathcal{G}$ to $\widehat{\mathcal{G}}$. We notice that
	\begin{equation}\label{eq-rkg}
	\gamma^K\prec\widehat{\mathcal{G}},\qquad\varphi(\gamma^K_{\mathcal{G}})=\gamma^K_{\widehat{\mathcal{G}}}\qquad\mbox{and}\qquad f(\gamma^K_{\mathcal{G}}(1))=\widetilde{f}(\varphi(\gamma^K_{\mathcal{G}}(1)))=\widetilde{f}(\gamma^K_{\widehat{\mathcal{G}}}(1))
	\end{equation}
	for each $K\in\mathbb{S}^N$, and thanks to \cite[Theorem \ref{th-dere}]{Dou2018001}, it follows that
	\begin{equation}\label{eq-mjf}
	\begin{split}
	&\mathcal{M}(J)^{-1}f(\gamma^{J}_{\mathcal{G}}(1))
	\\=&\mathcal{M}(J)^{-1}(f(\gamma^{J_1}_{\mathcal{G}}(1)),f(\gamma^{J_2}_{\mathcal{G}}(1)),...,f(\gamma^{J_{2^N}}_{\mathcal{G}}(1)))^T
	\\=&\mathcal{M}(J)^{-1}(\widetilde{f}(\gamma^{J_1}_{\widehat{\mathcal{G}}}(1)),\widetilde{f}(\gamma^{J_2}_{\widehat{\mathcal{G}}}(1)),...,\widetilde{f}(\gamma^{J_{2^N}}_{\widehat{\mathcal{G}}}(1)))^T
	\\=&\mathcal{M}(J)^{-1}(\zeta(J_1)^T,\zeta(J_2)^T,...,\zeta(J_{2^N})^T)^T\mathcal{M}(J_\sigma)^{-1}\widetilde{f}(\gamma^{J_\sigma}_{\widehat{\mathcal{G}}}(1))
	\\=&\mathcal{M}(J)^{-1}\mathcal{M}(J)\mathcal{M}(J_\sigma)^{-1}\widetilde{f}(\gamma^{J_\sigma}_{\widehat{\mathcal{G}}}(1))
	\\=&\mathcal{M}(J_\sigma)^{-1}\widetilde{f}(\gamma^{J_\sigma}_{\widehat{\mathcal{G}}}(1)),
	\end{split}
	\end{equation}
	where
	\begin{equation*}
	J_{\sigma}:=(J_{\sigma(1)}^T,J_{\sigma(2)}^T,...,J_{\sigma(2^N)}^T)^T
	\end{equation*}
	and
	\begin{equation*}
	\widetilde{f}(\gamma^{J_\sigma}_{\widehat{\mathcal{G}}}(1)):=(\widetilde{f}(\gamma^{J_{\sigma(1)}}_{\widehat{\mathcal{G}}}(1)),\widetilde{f}(\gamma^{J_{\sigma(2)}}_{\widehat{\mathcal{G}}}(1)),...,\widetilde{f}(\gamma^{J_{\sigma(2^N)}}_{\widehat{\mathcal{G}}}(1)))^T.
	\end{equation*}
	According to (\ref{eq-rkg}), (\ref{eq-mjf}) and \cite[Theorem \ref{th-dere}]{Dou2018001}, we have
	\begin{equation*}
	\begin{split}
	f(\gamma^K_{\mathcal{G}}(1))=&\widetilde{f}(\gamma^K_{\widehat{\mathcal{G}}}(1))
	\\=&\zeta(K)\mathcal{M}(J_\sigma)^{-1}\widetilde{f}(\gamma^{J_\sigma}_{\widehat{\mathcal{G}}}(1))
	\\=&\zeta(K)\mathcal{M}(J)^{-1}f(\gamma^J_{\mathcal{G}}(1)).
	\end{split}
	\end{equation*}
	Thence (\ref{eq-mrk}) holds.
	
	2. For each left slice-linearly independent matrix $I\in M_{2^N\times N}(\mathbb{S})$ with
	\begin{equation*}
	\gamma^{I_\imath}\prec\mathcal{G},\qquad\imath=1,2,...,2^N.
	\end{equation*}
	We notice that
	\begin{equation*}
	\begin{split}
	&\mathcal{M}(I)^{-1}f(\gamma^{I}_{\mathcal{G}}(1))
	\\=&\mathcal{M}(I)^{-1}(f(\gamma^{I_1}_{\mathcal{G}}(1)),f(\gamma^{I_2}_{\mathcal{G}}(1)),...,f(\gamma^{I_{2^N}}_{\mathcal{G}}(1)))^T
	\\=&\mathcal{M}(I)^{-1}(\zeta(I_1)^T,\zeta(I_2)^T,...,\zeta(I_{2^N})^T)^T\mathcal{M}(J)^{-1}f(\gamma^{J}_{\mathcal{G}}(1))
	\\=&\mathcal{M}(J)^{-1}f(\gamma^J_{\mathcal{G}}(1)),
	\end{split}
	\end{equation*}
	where
	\begin{equation*}
	f(\gamma^I_{\mathcal{G}}(1)):=(f(\gamma^{I_1}_{\mathcal{G}}(1)),f(\gamma^{I_2}_{\mathcal{G}}(1)),...,\gamma^{I_{2^N}}_{\mathcal{G}}(1))^T.
	\end{equation*}
	It is clear that $\mathcal{G}^f_\gamma$ does not depend on the choice of the left slice-linearly independent matrix $J\in M_{2^N\times N}(\mathbb{S})$ with $\gamma^{J_\imath}\prec\mathcal{G}$, $\imath=1,2,...,2^N$.
\end{proof}

\begin{defn}
	Let $\mathcal{G}=(G,\pi,x)$ be a slice-domain over $\mathbb{H}$ with distinguished point. $\mathcal{G}$ is called slice-symmetric with respect to distinguished point, if the following properties hold:
	\begin{enumerate}
		\item $\pi(x)\in\mathbb{R}$,
		
		\item If there exist $N\in\mathbb{N}^+$, an $N$-part path $\gamma$ in $\mathbb{C}$ and a left slice-linearly independent matrix $J\in M_{2^N\times N}(\mathbb{S})$, such that
		\begin{equation*}
		\gamma^{J_\imath}\prec\mathcal{G},\qquad\imath=1,2,...,2^N,
		\end{equation*}
		then
		\begin{equation*}
		\gamma^K\prec\mathcal{G},\qquad\forall\ K\in\mathbb{S}^N.
		\end{equation*}
	\end{enumerate}
	
	The slice-domain $(G,\pi)$ is called slice-symmetric if $(G,\pi,y)$ is slice-symmetric with respect to distinguished point, for each $y\in G_{\mathbb{R}}$.
	
	$\mathcal{G}$ is called slice-symmetric, if $(G,\pi)$ is slice-symmetric.
\end{defn}

\begin{rmk}
	``slice-symmetric with respect to distinguished point" and ``slice-symmetric" are different. Let $\mathcal{G}=(G,\pi,x)$ be a slice-domain over $\mathbb{H}$ with distinguished point.
	
	If $\mathcal{G}$ is slice-symmetric with respect to distinguished point, this does not lead to $(G,\pi,y)$ is also slice-symmetric with respect to distinguished point for each $y\in G_\mathbb{R}$.
	
	On the other hand, if $\mathcal{G}=(G,\pi,x)$ is slice-symmetric, then for each $y\in G_\mathbb{R}$, $(G,\pi,y)$ is also slice-symmetric with respect to distinguished point. However, if $y\notin G_\mathbb{R}$, $(G,\pi,y)$ is not slice-symmetric with respect to distinguished point.
\end{rmk}

\begin{prop}\label{pr-sdhss}
	All the slice-domain of regularity are slice-symmetric.
\end{prop}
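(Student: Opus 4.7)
My plan is to derive slice-symmetry from the maximality built into the definition of a slice-domain of regularity, using Theorem \ref{tm-rf} as the extension mechanism. Let $\mathcal{G}=(G,\pi,x)$ be a slice-domain of regularity with witness $f\in\mathcal{SR}(G)$. Fix $y\in G_{\mathbb{R}}$, an $N$-part path $\gamma$ in $\mathbb{C}$, a left slice-linearly independent $J\in M_{2^N\times N}(\mathbb{S})$ with $\gamma^{J_\imath}\prec(G,\pi,y)$ for every $\imath$, and $K\in\mathbb{S}^N$. I need to show that $\gamma^K\prec(G,\pi,y)$.

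The idea is that if $\gamma^K$ fails to lift from $y$, one can produce an $\{f\}$-extendible slice-domain $\mathcal{G}'$ that strictly enlarges $\mathcal{G}$ and in which $\gamma^K$ does lift, contradicting the regularity of $\mathcal{G}$. Set $T:=\{t\in[0,1]:\gamma^K[t]\prec(G,\pi,y)\}$; it contains $0$ and is open-to-the-right because $\pi$ is a local slice-homeomorphism. If $T\neq[0,1]$, focus on $t^*=\sup T$. Since the $J$-liftings exist over the entire interval, Theorem \ref{tm-rf}, applied to partial paths $\gamma[t]$ for $t$ close to $t^*$, prescribes a candidate value $\zeta(K')\mathcal{M}(J)^{-1}f(\gamma^J[t]_{\mathcal{G}}(1))$ for $f$ at the would-be endpoint of $\gamma^{K'}[t]$, for every $K'\in\mathbb{S}^N$.

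To realize these candidate values, I would construct $\mathcal{G}'$ by attaching to $G$ a small slice-open neighborhood $U\subset\mathbb{H}$ of the trace of $\gamma^K$ near $t^*$, with $U$ chosen so that each complex slice $U_I$ is a product of small disks adapted to the segment endpoints. Points of $U$ already accessible from $y$ through $G$ are identified with their $G$-counterparts; the remainder consists of new points. Extend $f$ to a function $f'$ on $\mathcal{G}'$ using the representation formula, which is self-consistent thanks to the invariance (\ref{eq-gfg}) of Theorem \ref{tm-rf} (the predicted values do not depend on the particular $J$ chosen). Slice regularity of $f'$ then follows because on each slice $\mathbb{C}_I$ the formula expresses $f'$ as a left quaternion linear combination of holomorphic functions of the slice variable. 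Taking distinguished point $\varphi(x)\in G'$, where $\varphi:G\to G'$ is the gluing inclusion, realizes $\mathcal{G}\prec\mathcal{G}'$; by the regularity hypothesis, $\mathcal{G}'\prec\mathcal{G}$, which pulls the lifting of $\gamma^K$ in $\mathcal{G}'$ back to $(G,\pi,y)$, contradicting $t^*\notin T$.

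The principal obstacle is the topological construction of $\mathcal{G}'$: verifying Hausdorffness and that $\pi'$ remains a local slice-homeomorphism after the gluing. The invariance (\ref{eq-gfg}) guarantees the candidate values are well-defined pointwise, but correctly distinguishing sheets of $G$ sitting above a common slice neighborhood requires shrinking $U$ and treating the slices $\mathbb{C}_{K_l}$ successively according to the $N$-part structure of $\gamma$. A clean organization is likely an induction on the number of parts, reducing to the case where the failure to lift occurs only in the final segment, where the problem is essentially one-dimensional along a single slice and the representation formula directly yields the requisite fiber.
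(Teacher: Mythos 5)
Your strategy is to re-derive, from scratch, the extension mechanism that the paper simply cites: the statement that on a slice-domain of regularity the existence of the $\gamma^{J_\imath}$-liftings forces the existence of all $\gamma^K$-liftings is precisely (part of) \cite[Theorem \ref{th-dere}]{Dou2018001}, and the paper's proof consists of invoking that theorem together with Proposition \ref{pr-tfb} (to pass from left slice-linear independence to full slice-rank after a row permutation) and \cite[Proposition \ref{pr-tsh}]{Dou2018001} (to move the distinguished point to an arbitrary $y\in G_\mathbb{R}$). The genuine gap in your proposal is that the entire load-bearing step --- constructing the glued domain $\mathcal{G}'$, verifying Hausdorffness, checking that the projection remains a local slice-homeomorphism, and showing that the function $f'$ defined by the representation formula is single-valued across the identified sheets and slice regular at the attached points --- is exactly the hard content of that cited theorem, and you explicitly leave it as a sketch (``the principal obstacle is the topological construction of $\mathcal{G}'$''). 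As written, nothing is proved: the contradiction at $t^*=\sup T$ is only reached if $\mathcal{G}'$ actually exists as a slice-domain over $\mathbb{H}$ with a slice regular extension of $f$.

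Two further concrete problems would surface if you tried to complete the construction. First, your plan to treat ``the slices $\mathbb{C}_{K_l}$ successively according to the $N$-part structure'' requires, at each intermediate level $l$, inverting the truncated matrix $\mathcal{M}(J^{(l)})$ in order to propagate values from the $J$-liftings to the $K$-lifting of the partial path $\gamma[l/N]$; left slice-linear independence of the full matrix $J$ does not give this, and repairing it is exactly the point of Proposition \ref{pr-tfb}, which you never invoke. Second, the claim that $f'$ is slice regular because ``the formula expresses $f'$ as a left quaternion linear combination of holomorphic functions of the slice variable'' is too quick: if $(\frac{\partial}{\partial x}+J\frac{\partial}{\partial y})h=0$ on $\mathbb{C}_J$, the left multiple $qh$ need not satisfy the analogous equation on $\mathbb{C}_{K_N}$; one needs the intertwining identity $\mathcal{M}(J)\sigma_N=D_N(J)\mathcal{M}(J)$ behind \cite[Proposition \ref{pr-ct}]{Dou2018001}, as used in the proof of Theorem \ref{th-srhs}. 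The efficient route is the paper's: cite \cite[Proposition \ref{pr-tsh}]{Dou2018001} to reduce to the distinguished point $y$, apply Proposition \ref{pr-tfb} to obtain a full slice-rank reordering of $J$, and then apply \cite[Theorem \ref{th-dere}]{Dou2018001} directly.
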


\begin{proof}
	Let $\mathcal{G}=(G,\pi,x)$ is a slice-domain of regularity. According to \cite[Proposition \ref{pr-tsh}]{Dou2018001}, $(G,\pi,y)$ is also a slice-domain of regularity for each $y\in G_\mathbb{R}$. Thanks to Proposition \ref{pr-tfb} and \cite[Theorem \ref{th-dere}]{Dou2018001}, $(G,\pi,y)$ is slice-symmetric with respect to distinguished point. Thus $(G,\pi)$ and $\mathcal{G}$ is slice-symmetric.
\end{proof}

\begin{cor}
	Let $\mathcal{G}=(G,\pi,x)$ be a slice-domain over $\mathbb{H}$ with distinguished point, and $f$ be a slice-regular function on $G$. Then $f$ can extend slice regularly to each slice-domain of existence of $f$ with respect to $\mathcal{G}$, which is slice-symmetric.
\end{cor}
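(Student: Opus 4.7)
The plan is to construct a slice regular extension $\widehat f$ on $\widehat G$ by assembling the local extensions carried by the $\{f\}$-extendible slice-domains in $\mathscr G$. Since $\widehat{\mathcal G}\in H_f(\mathcal G)$ is a union of $\mathscr G$, each $\breve{\mathcal G}=(\breve G,\breve\pi,\breve x)\in\mathscr G$ comes equipped with an extension $F_{\breve{\mathcal G}}$ of $f$ on $\breve G$ and with a fiber-preserving map $\psi_{\breve{\mathcal G}}:\breve G\to\widehat G$ satisfying $\psi_{\breve{\mathcal G}}(\breve x)=\widehat x$. The natural candidate for $\widehat f$ is the function defined on each piece $\psi_{\breve{\mathcal G}}(\breve G)$ by $y\mapsto F_{\breve{\mathcal G}}(\psi_{\breve{\mathcal G}}^{-1}(y))$.

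To show this candidate is single-valued and covers all of $\widehat G$, I would proceed along the following lines. For $y\in\widehat G$, choose a finite-part path $\alpha$ in $\widehat G$ from $\widehat x$ to $y$; let $\widehat\pi(\alpha)=\gamma^K$ with $\gamma\in\mathcal P^N(\mathbb C)$ and $K\in\mathbb S^N$. Note that $\widehat\pi(\widehat x)\in\mathbb R$ holds by the first condition of being slice-symmetric, so the hypothesis of Theorem \ref{tm-rf} is met. Next, produce a left slice-linearly independent $J\in M_{2^N\times N}(\mathbb S)$ with $\gamma^{J_\imath}\prec\widehat{\mathcal G}$ for each $\imath$: near $\widehat x$ such lifts exist because $\mathcal G\in\mathscr G$ is itself $\{f\}$-extendible, and slice-symmetry of $\widehat{\mathcal G}$ guarantees that the additional lifts stay inside $\widehat{\mathcal G}$. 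By Proposition \ref{pr-tfb} we may further arrange, after a permutation, that $J$ has full slice-rank, so that Theorem \ref{tm-rf} applies and yields $f(\gamma^K_{\widehat{\mathcal G}}(1))=\zeta(K)\,\widehat{\mathcal G}^f_\gamma$. The invariance of $\widehat{\mathcal G}^f_\gamma=\mathcal M(J)^{-1}f(\gamma^J_{\widehat{\mathcal G}}(1))$ asserted in (\ref{eq-gfg}) is exactly the consistency mechanism: it makes the value $\widehat f(y)$ independent of the choice of $J$ and of the path $\alpha$, hence well-defined across overlapping pieces $\psi_{\breve{\mathcal G}}(\breve G)$.

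Slice regularity of $\widehat f$ on $\widehat G$ then follows locally from the slice regularity of each $F_{\breve{\mathcal G}}$ and the fact that $\psi_{\breve{\mathcal G}}$ is a local slice-homeomorphism, via Definition \ref{def:2slicefunction}. The main obstacle is the compatibility step: one must show that, thanks to slice-symmetry, the endpoints $\gamma^{J_\imath}_{\widehat{\mathcal G}}(1)$ appearing in the representation formula in fact lie in images of some $\{f\}$-extendible slice-domain, so that the values $f(\gamma^{J_\imath}_{\widehat{\mathcal G}}(1))$ on the right-hand side of (\ref{eq-mrk}) are already determined by the given extensions. Once that coverage is in hand, the algebraic single-valuedness is automatic from the invariance in Theorem \ref{tm-rf}, and the corollary follows.
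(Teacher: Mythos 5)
You have misread the statement. The clause ``which is slice-symmetric'' is the \emph{conclusion} of the corollary, not a hypothesis: the claim is that every slice-domain of existence of $f$ with respect to $\mathcal{G}$ is slice-symmetric, while the assertion that $f$ extends to it is already supplied by \cite[Theorem \ref{th-exsd}]{Dou2018001} (it is invoked throughout the present paper, e.g.\ at the start of the proof of Theorem \ref{tm-rf}). The paper's proof is accordingly one line: a slice-domain of existence is a slice-domain of regularity, and Proposition \ref{pr-sdhss} says that all slice-domains of regularity are slice-symmetric. Your proposal never establishes slice-symmetry of $\widehat{\mathcal{G}}$; on the contrary, you use it twice as a standing assumption (``$\widehat{\pi}(\widehat{x})\in\mathbb{R}$ holds by the first condition of being slice-symmetric'', and ``slice-symmetry of $\widehat{\mathcal{G}}$ guarantees that the additional lifts stay inside $\widehat{\mathcal{G}}$''). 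The actual content of the corollary is therefore left unproved.

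The part you do argue for, namely the existence of a single-valued slice regular extension $\widehat{f}$ on $\widehat{G}$, is also problematic as a derivation. That extension is precisely what \cite[Theorem \ref{th-exsd}]{Dou2018001} provides, and the proof of Theorem \ref{tm-rf} --- on whose invariance statement your gluing consistency rests --- begins by choosing the slice-domain of existence $\widehat{\mathcal{G}}$ together with the extension $\widetilde{f}$. Using Theorem \ref{tm-rf} to manufacture the extension is therefore circular; and if the extension is taken as already known, your construction is redundant. You also acknowledge but do not close the coverage and overlap step. The correct argument is simply: the slice-domain of existence of $f$ is a slice-domain of regularity, hence slice-symmetric by Proposition \ref{pr-sdhss}, and $f$ extends to it by \cite[Theorem \ref{th-exsd}]{Dou2018001}.
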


\begin{proof}
	This corollary follows immediately from Proposition \ref{pr-sdhss}.
\end{proof}

\section{Examples of slice-domains of existence}\label{sc-2ese}

Now, we describe two examples, slice-domains of existence of $x^{\frac{1}{2}}$ and $ln(x)$ to show that how the Representation Formula \ref{tm-rf} works. Before that, we talk about some notations (see Proposition \ref{pr-umm}) and a proposition of slice-domains of existence (see Proposition \ref{pr-srfid}).

For each $N\in\mathbb{N}^+$ and $m\in\{1,2,...,2^N\}$, we define a map
\begin{equation*}
\eta_N^m:\mathbb{S}\rightarrow\mathbb{S}^N
\end{equation*}
by
\begin{equation*}\label{eq-mni}
\eta_N^m(I):=((-1)^{m_N}I,(-1)^{m_{N-1}}I,...,(-1)^{m_1}I),\qquad\forall\ I\in\mathbb{S},
\end{equation*}
where $(m_Nm_{N-1}...m_1)_2$ is the binary number of $m-1$.

For each $N\in\mathbb{N}^+$, we define a map
\begin{equation*}
\eta_N:\mathbb{S}\rightarrow M_{2^N\times N}(\mathbb{S})
\end{equation*}
by
\begin{equation*}
\eta_N(I):=(\eta_N^1(I)^T,\eta_N^2(I)^T,...,\eta_N^{2^N}(I)^T)^T,\qquad\forall I\in\mathbb{S}.
\end{equation*}
Then the following proposition holds.

\begin{prop}\label{pr-umm}
	For each $N\in\mathbb{N}^+$ and $I\in\mathbb{S}$, the matrix $2^{-\frac{N}{2}}\mathcal{M}(\eta_N(I))$ in $M_{2^N}(\mathbb{C}_I)$ is unitary, i.e.,
	\begin{equation}\label{eq-mqi}
	\mathcal{M}(\eta_N(I))^{-1}=\frac{1}{2^N}\overline{\mathcal{M}(\eta_N(I))^T}.
	\end{equation}
\end{prop}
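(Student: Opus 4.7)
The plan is to verify identity (\ref{eq-mqi}) directly by computing the $(m,m')$-entry of $\mathcal{M}(\eta_N(I))\overline{\mathcal{M}(\eta_N(I))^T}$ and showing it equals $2^N$ when $m=m'$ and $0$ otherwise. Writing $K_m:=\eta_N^m(I)$, the $(m,l)$-entry of $\mathcal{M}(\eta_N(I))$ is
$$K_m(l)=\prod_{i=N}^{1}(K_{m,i}K_{m,i-1})^{l_i},$$
so the required entry is $\sum_{l=1}^{2^N}K_m(l)\overline{K_{m'}(l)}$, with the conjugation taken in $\mathbb{C}_I$. The crucial observation is that each component of $\eta_N^m(I)$ equals $\pm I\in\mathbb{C}_I$, so every factor lies in the commutative field $\mathbb{C}_I$ and can be rearranged freely.

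Exploiting commutativity yields the factorization
$$K_m(l)\overline{K_{m'}(l)}=\prod_{i=1}^{N}\bigl[(K_{m,i}K_{m,i-1})\overline{(K_{m',i}K_{m',i-1})}\bigr]^{l_i}.$$
Setting $a_i:=(K_{m,i}K_{m,i-1})\overline{(K_{m',i}K_{m',i-1})}$ and applying the elementary identity $\sum_{(l_1,\ldots,l_N)\in\{0,1\}^N}\prod_i a_i^{l_i}=\prod_i(1+a_i)$, the sum collapses to $\prod_{i=1}^{N}(1+a_i)$, reducing everything to an explicit computation of the $a_i$'s.

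For $i=1$: $K_{m,1}K_{m,0}=(-1)^{m_N}I$ (since $K_{m,0}:=1$), whose $\mathbb{C}_I$-conjugate is $-(-1)^{m_N}I$, producing $a_1=(-1)^{m_N+m'_N}$. For $i\ge 2$: both $K_{m,i}$ and $K_{m,i-1}$ are $\pm I$, so $K_{m,i}K_{m,i-1}$ is the real number $(-1)^{m_{N-i+1}+m_{N-i+2}+1}$, self-conjugate, yielding $a_i=(-1)^{(m_{N-i+1}+m'_{N-i+1})+(m_{N-i+2}+m'_{N-i+2})}$. Writing $d_j\equiv m_j+m'_j\pmod{2}$, we get $a_1=(-1)^{d_N}$ and $a_i=(-1)^{d_{N-i+1}+d_{N-i+2}}$ for $i\ge 2$, so each $a_i\in\{\pm 1\}$ and each $1+a_i\in\{0,2\}$. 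Therefore $\prod_i(1+a_i)=2^N$ iff $d_N=0$ and $d_j=d_{j+1}$ for $1\le j\le N-1$, which forces $d_1=\cdots=d_N=0$, i.e., $m=m'$; otherwise some factor vanishes. This establishes (\ref{eq-mqi}) and the asserted unitarity of $2^{-N/2}\mathcal{M}(\eta_N(I))$.

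The only nontrivial difficulty I anticipate is bookkeeping: rows of $\eta_N(I)$ are indexed by $m\in\{1,\ldots,2^N\}$ via the binary digits $(m_N\ldots m_1)_2$ of $m-1$, the $k$-th component of $\eta_N^m(I)$ uses the exponent $m_{N-k+1}$, and the product formula for $\zeta$ runs from $i=N$ down to $i=1$. Once these three reversed orderings are lined up consistently, the computation above is essentially a Walsh--Hadamard-style orthogonality argument.
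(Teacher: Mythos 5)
Your argument is correct, and it takes a genuinely different route from the paper. The paper proceeds by induction on $N$: it splits $\mathcal{M}(\eta_m(I))$ into two blocks $A,B$, records the relations $A\overline{A^T}=B\overline{B^T}=2^m\mathbb{I}_{2^{m-1}}$ and $A\overline{B^T}=B\overline{A^T}=0$, and then observes that $\mathcal{M}(\eta_{m+1}(I))$ has the doubled block form $\left(\begin{smallmatrix}A&-A\\B&B\\A&A\\B&-B\end{smallmatrix}\right)$, from which unitarity at level $m+1$ follows by block multiplication --- essentially the Sylvester doubling construction for Hadamard-type matrices. You instead compute the $(m,m')$-entry of $\mathcal{M}(\eta_N(I))\overline{\mathcal{M}(\eta_N(I))^T}$ in closed form, using that all entries lie in the commutative field $\mathbb{C}_I$ to factor the sum over columns as $\prod_{\imath}(1+a_\imath)$ and then checking that the product is $2^N$ exactly when $m=m'$ and vanishes otherwise (your verification of the $a_\imath$, including the cancellation of the two $+1$'s for $\imath\ge 2$ and the telescoping $d_{j}=d_{j+1}$ argument, is right). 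What your approach buys is an explicit Walsh--Hadamard orthogonality formula for every pair of rows and no need to relate $\eta_{N+1}$ to $\eta_N$; what the paper's induction buys is freedom from the triple index-reversal bookkeeping that you correctly flag as the delicate point. One small remark worth making explicit in a final write-up: since all entries lie in $\mathbb{C}_I$, the identity $\mathcal{M}\cdot\frac{1}{2^N}\overline{\mathcal{M}^T}=\mathbb{I}_{2^N}$ is an identity of matrices over a field, so the right inverse is automatically the two-sided inverse claimed in (\ref{eq-mqi}).
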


\begin{proof}
	We will prove this proposition by induction. When $N=1$, we have
	\begin{equation*}
	\frac{1}{2^1}\left(\begin{matrix}1&I\\1&-I\end{matrix}\right)
	\left(\begin{matrix}1&1\\-I&I\end{matrix}\right)
	=\left(\begin{matrix}1&0\\0&1\end{matrix}\right)
	\end{equation*}
	for each $I\in\mathbb{S}$. Then (\ref{eq-mqi}) holds.
	
	If (\ref{eq-mqi}) holds when $N=m$, where $m\in\mathbb{N}^+$ and $m\ge1$. We will prove that (\ref{eq-mqi}) holds in the case of $N=m+1$. For each $I\in\mathbb{S}$, there exist matrices $A,B\in M_{2^{m-1}\times 2^m}(\mathbb{H})$ such that
	\begin{equation*}
	\mathcal{M}(\eta_m(I))=(A^T,B^T)^T.
	\end{equation*}
	By induction hypothesis, we have:
	\begin{equation*}
	\frac{1}{2^m}\left(\begin{matrix}A\\B\end{matrix}\right)
	\left(\begin{matrix}\overline{A^T}&\overline{B^T}\end{matrix}\right)
	=\mathbb{I}_{2^m}.
	\end{equation*}
	It follows that
	\begin{equation}\label{eq-2abi}
	A\overline{A^T}=B\overline{B^T}=2^m\mathbb{I}_{2^{m-1}}\qquad\mbox{and}\qquad A\overline{B^T}=B\overline{A^T}=0_{2^{m-1}}.
	\end{equation}
	According to (\ref{eq-2im}) and (\ref{eq-mni}), we have
	\begin{equation*}
	\eta_{m+1}^\imath(\jmath)=\eta_{m+1}^{\imath+2^m}(\jmath)=-\eta_{m+1}^\imath(\jmath+2^m)=\eta_{m+1}^{\imath+2^m}(\jmath+2^m),
	\end{equation*}
	and
	\begin{equation*}
	\eta_{m+1}^{\imath+2^{m-1}}(\jmath)=\eta_{m+1}^{\imath+2^m+2^{m-1}}(\jmath)=\eta_{m+1}^{\imath+2^{m-1}}(\jmath+2^m)=-\eta_{m+1}^{\imath+2^m+2^{m-1}}(\jmath+2^m)
	\end{equation*}
	for each $\imath\in\{1,2,...,2^{m-1}\}$ and $\jmath\in\{1,2,...,2^m\}$. 
	Then
	\begin{equation*}
	\mathcal{M}(\eta_{m+1}(I))=\left(\begin{matrix}A&-A\\B&B\\A&A\\B&-B\end{matrix}\right).
	\end{equation*}
	Thanks to (\ref{eq-2abi}),
	\begin{equation*}
	\frac{1}{2^{m+1}}\left(\begin{matrix}A&-A\\B&B\\A&A\\B&-B\end{matrix}\right)
	\left(\begin{matrix}\overline{A^T}&\overline{B^T}&\overline{A^T}&\overline{B^T}\\-\overline{A^T}&\overline{B^T}&\overline{A^T}&-\overline{B^T}\end{matrix}\right)
	=\mathbb{I}_{2^{m+1}}.
	\end{equation*}
	It follows that
	\begin{equation*}
	2^{-\frac{m+1}{2}}\mathcal{M}(\eta_N(I))
	\end{equation*}
	is an unitary matrix for each $I\in\mathbb{S}$. Hence
	\begin{equation*}
	2^{-\frac{N}{2}}\mathcal{M}(\eta_N(I))
	\end{equation*}
	is an unitary matrix for each $N\in\mathbb{N}^+$ and $I\in\mathbb{S}$, by induction.
\end{proof}

\begin{prop}\label{pr-srfid}
	Let $\mathcal{G}=(G,\pi,x)$ be a slice-domain over $\mathbb{H}$ with distinguished point, $p,q\in G$ with $\pi(p)=\pi(q)$, and $f$ be a slice regular function on $G$. If $\mathcal{G}$ is a slice-domain of existence of $f$ with respect to $\mathcal{G}$, and $f_p=f_q$, then $p=q$.
\end{prop}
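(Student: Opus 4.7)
My plan is to assume $p\neq q$ and derive a contradiction by constructing a slice-domain $\widetilde{\mathcal{G}}$, to which $f$ extends, that is strictly larger than $\mathcal{G}$ via a non-injective fiber-preserving map, contradicting the maximality of $\mathcal{G}$ as a slice-domain of existence. To this end, I would define an equivalence relation $\sim$ on $G$ by $y_1\sim y_2$ iff $\pi(y_1)=\pi(y_2)$ and $f_{y_1}=f_{y_2}$. The hypothesis says $p\sim q$, and the goal becomes showing $\sim$ is the diagonal.

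First I would verify that $\sim$ is an open relation: whenever $y_1\sim y_2$, the germ equality supplies open neighborhoods $U_i\ni y_i$ with $\pi|_{U_i}:U_i\to W$ slice-homeomorphisms onto a common slice-open set $W\subset\mathbb{H}$, with $f\circ\pi|_{U_1}^{-1}=f\circ\pi|_{U_2}^{-1}$ on $W$, so every $y_1'\in U_1$ is $\sim$-equivalent to $\pi|_{U_2}^{-1}(\pi(y_1'))\in U_2$. The more delicate step is to show the graph of $\sim$ is closed in $G\times G$. Given $(y_1^n,y_2^n)\to(y_1,y_2)$ with $y_1^n\sim y_2^n$, take $U_1,U_2,W$ for the limit pair; for large $n$ both $y_i^n\in U_i$, and injectivity of $\pi|_{U_2}$ forces $y_2^n=\pi|_{U_2}^{-1}\circ\pi|_{U_1}(y_1^n)$, so $y_2=\pi|_{U_2}^{-1}\circ\pi|_{U_1}(y_1)$ by continuity. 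The germ equalities $f_{y_1^n}=f_{y_2^n}$ then say that the two slice regular functions $f\circ\pi|_{U_1}^{-1}$ and $f\circ\pi|_{U_2}^{-1}$ on $W$ agree on a slice-open neighborhood of each $\pi(y_1^n)$; accumulating at $\pi(y_1)$ and invoking the identity theorem for slice regular functions on the slice-connected set $W$ forces agreement on all of $W$, hence $y_1\sim y_2$.

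With $\sim$ open and with closed graph, the quotient $\widetilde G:=G/\!\sim$ is Hausdorff; it is connected as a quotient of a connected space, and the induced projection $\widetilde\pi$ is a local slice-homeomorphism since $\pi$ is. Thus $\widetilde{\mathcal{G}}:=(\widetilde G,\widetilde\pi,[x])$ is a slice-domain over $\mathbb{H}$ with distinguished point, the quotient map $\varphi:G\to\widetilde G$ is fiber-preserving with $\varphi(x)=[x]$, and $f$ descends to a slice regular function $\widetilde f$ on $\widetilde G$ with $\widetilde f\circ\varphi=f$. Hence $\mathcal{G}\prec\widetilde{\mathcal{G}}$ and $\widetilde{\mathcal{G}}$ is $\{f\}$-extendible.

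Since $\mathcal{G}$ is a slice-domain of existence of $f$, the maximality gives $\widetilde{\mathcal{G}}\prec\mathcal{G}$ via a fiber-preserving map $\psi:\widetilde G\to G$ with $\psi([x])=x$. Then $\psi\circ\varphi:G\to G$ preserves fibers and fixes $x$, so by the standard uniqueness of such maps on connected Riemann slice-domains it must equal $\id_G$; in particular $\varphi$ is injective, so $\sim$ is trivial and $p=q$, contradicting $p\neq q$. The main obstacle is the closed-graph argument, where the identity theorem for slice regular functions on a slice-connected slice-open subset of $\mathbb{H}$ is the essential input; everything else is a direct unwinding of the definitions and of the maximality built into the notion of domain of existence.
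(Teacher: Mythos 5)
Your proposal is correct and follows essentially the same route as the paper: both form the quotient of $G$ by the germ-equivalence relation $\sim$, verify it is a slice-domain over $\mathbb{H}$ to which $f$ extends, and invoke the maximality of the slice-domain of existence to force the quotient map to be injective. The only cosmetic differences are that you package the Hausdorff verification as ``open relation plus closed graph'' (the paper argues it directly via uniqueness of lifts) and phrase the conclusion as a contradiction rather than directly.
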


\begin{proof}
	We define an equivalence $\sim$ on $G$ by $p'\sim q'$, if and only if, $\pi(p')=\pi(q')$ and $f_{p'}=f_{q'}$. We set
	\begin{equation*}
	\widehat{G}:=G/\sim,
	\end{equation*}
	and let
	\begin{equation*}
	\varphi:G\rightarrow\widehat{G}
	\end{equation*}
	be the quotient map of $\sim$. Let the topology of $\widehat{G}$ be the quotient space topology, then $\varphi$ is continuous. since
	\begin{equation*}
	\pi(p_1)=\pi(p_2),\qquad\forall\ q'\in G\ \mbox{and}\ p_1,p_2\in\varphi^{-1}(q'),
	\end{equation*}
	it follows that there exists a map $\widehat{\pi}:\widehat{G}\rightarrow\mathbb{H}$, such that
	\begin{equation}\label{eq-2pqp}
	\widehat{\pi}(q')=\pi(p'),\qquad\forall\ q'\in G\ \mbox{and}\ p'\in\varphi^{-1}(q').
	\end{equation}
	Similarly, there exists a map $\widehat{f}:\widehat{G}\rightarrow\mathbb{H}$, such that
	\begin{equation}\label{eq-2fqp}
	\widehat{f}(q')=f(p'),\qquad\forall\ q'\in G\ \mbox{and}\ p'\in\varphi^{-1}(q').
	\end{equation}
	We set
	\begin{equation}\label{eq-2xfx}
	\widehat{x}:=\varphi(x).
	\end{equation}
	We will prove that
	\begin{equation*}
	\widehat{\mathcal{G}}:=(\widehat{G},\widehat{\pi},\widehat{x})
	\end{equation*}
	is a slice-domain of existence of $f$ (following the proof of \cite[Theorem \ref{th-union}]{Dou2018001}), and
	\begin{equation*}
	f|_{\widehat{G}}=\widehat{f}.
	\end{equation*}
	
	1). For each $p'\in\widehat{G}$, there exists $q'\in\varphi^{-1}(p')$. Since $G$ is connected, there exists a path $\gamma$ in $G$ from $x$ to $q'$. Because $\varphi$ is continuous, $\varphi\circ\gamma$ is a path in $\widehat{G}$ from $\widehat{x}$ to $p'$. Then $\widehat{G}$ is path-connected. It is also connected.
	
	2). For each $p'\in\widehat{G}$, there exists $q'\in\varphi^{-1}(p')$. There exists a domain $U$ in $G$ containing $q'$ such that $\pi|_U:U\rightarrow\pi(U)$ is a slice-homeomorphism. We notice that $\varphi|_U:U\rightarrow\varphi(U)$ is a continuous bijection. We will prove that $\varphi|_U^{-1}$ is continuous.
	
	For each domain $V$ in $U$, we set
	\begin{equation*}
	V':=\varphi(V).
	\end{equation*}
	For each $y\in\varphi^{-1}(V')$, there exists a domain $W$ in $G$ containing $y$, such that $\pi|_W:W\rightarrow\pi(W)$ is a slice-homeomorphism and $\pi(W)\subset\pi(V)$. We denote the unique element in $\varphi|_U^{-1}\circ\varphi(y)$ by $y'$. Since
	\begin{equation*}
	\varphi(y')=\varphi(\varphi|_U^{-1}\circ\varphi(y))=\varphi(y),
	\end{equation*}
	it is clear that
	\begin{equation*}
	y\sim y'.
	\end{equation*}
	According to $f_y=f_{y'}$ and the identity principle (see \cite[Theorem \ref{pt-ipsd}]{Dou2018001}), we have
	\begin{equation*}
	f\circ\pi|_U^{-1}=f\circ\pi|_W^{-1}\qquad\mbox{on}\qquad\pi(W).
	\end{equation*}
	Thence
	\begin{equation*}
	f_{\pi|_U^{-1}(z)}=f_{\pi|_W^{-1}(z)},\qquad\forall\ z\in\pi(W).
	\end{equation*}
	It follows that
	\begin{equation*}
	\pi|_U^{-1}(z)\sim\pi|_W^{-1}(z)\qquad\forall\ z\in\pi(W).
	\end{equation*}
	Thereby
	\begin{equation*}
	W\subset\varphi^{-1}(V').
	\end{equation*}
	And since $W$ is open in $G$, $y$ is an interior point of $\varphi^{-1}(V')$. It follows that each point in $\varphi^{-1}(V')$ is an interior point. Then $\varphi^{-1}(V')$ is open in $G$. And since $\varphi$ is continuous, $V'$ is open in $\widehat{G}$. So
	\begin{equation*}
	V'=(\varphi|_U^{-1})^{-1}(V)
	\end{equation*}
	is open in $\widehat{G}$ for each domain $V$ in $U$. It is clear that $\varphi|_U^{-1}$ is continuous.
	
	Then $\varphi|_U$ is a homeomorphism. And since $\pi|_U$ is a slice-homeomorphism, it follows that
	\begin{equation*}
	\widehat{\pi}|_{\varphi(U)}=\pi|_U\circ\varphi|_U^{-1}
	\end{equation*}
	is a slice-homeomorphism from $\varphi(U)$ to $\pi(U)$. Then $\widehat{\pi}$ is local slice-homeomorphism.
	
	3). For each $p',q'\in\widehat{G}$ with
	\begin{equation*}
	p'\neq q'.
	\end{equation*}
	If $\widehat{\pi}(p')\neq\widehat{\pi}(q')$, then there exist a slice-domain $U$ in $\mathbb{H}$ containing $\widehat{\pi}(p')$ and a slice-domain $V$ in $\mathbb{H}$ containing $\widehat{\pi}(q')$, such that $U\cap V=\varnothing$. Since $\widehat{\pi}$ is local slice-homeomorphism, $\widehat{\pi}^{-1}(V)$ and $\widehat{\pi}^{-1}(U)$ are two disjoint open sets in $\widehat{G}$ containing $p'$ and $q'$ respectively. Then $\widehat{G}$ is Hausdorff.
	
	If $\widehat{\pi}(p')=\widehat{\pi}(q')$, then there exist a domain $U$ in $\widehat{G}$ containing $p'$, a domain $V$ in $\widehat{G}$ containing $q'$, and a slice-domain $W$ in $\mathbb{H}$, such that
	\begin{equation*}
	\widehat{\pi}|_U:U\rightarrow W\qquad\mbox{and}\qquad\widehat{\pi}|_V:V\rightarrow W
	\end{equation*}
	are slice-homeomorphisms. We notice that $\widehat{\pi}|_U^{-1}$ and $\widehat{\pi}|_V^{-1}$ are two continuous mappings from $W$ to $\widehat{G}$. If there exists $y\in U\cap V$, then
	\begin{equation*}
	\widehat{\pi}|_U^{-1}(\widehat{\pi}(y))=y=\widehat{\pi}|_V^{-1}(\widehat{\pi}(y)),
	\end{equation*}
	and
	\begin{equation*}
	\widehat{\pi}\circ\widehat{\pi}|_U^{-1}={\id}_W=\widehat{\pi}\circ\widehat{\pi}|_V^{-1}.
	\end{equation*}
	And thanks to \cite[Proposition \ref{pr-ul}]{Dou2018001}, we have
	\begin{equation*}
	\widehat{\pi}|_U^{-1}=\widehat{\pi}|_V^{-1}.
	\end{equation*}
	It is clear that
	\begin{equation*}
	p'=\widehat{\pi}|_U^{-1}(\widehat{\pi}(p'))=\widehat{\pi}|_V^{-1}(\widehat{\pi}(q'))=q',
	\end{equation*}
	which is a contradiction. It implies that $U\cap V=\varnothing$. Then $\widehat{G}$ is Hausdorff. And since 1) and 2), $\widehat{\mathcal{G}}$ is a slice-domain over $\mathbb{H}$ with distinguished point.
	
	4). According to (\ref{eq-2pqp}) and (\ref{eq-2xfx}), $\varphi$ is the fiber preserving map from $\mathcal{G}$ to $\widehat{\mathcal{G}}$. It follows that
	\begin{equation}\label{eq-2gg}
	\mathcal{G}\prec\widehat{\mathcal{G}}.
	\end{equation}
	Thanks to (\ref{eq-2fqp}), we have
	\begin{equation*}
	\widehat{f}|_G=\widehat{f}\circ\varphi=f.
	\end{equation*}
	It is clear that $\widehat{f}$ is a slice regular extension of $f$. And since $\mathcal{G}$ is a slice-domain of existence of $f$, it follows that $\widehat{\mathcal{G}}\prec\mathcal{G}$ by \cite[Theorem \ref{th-exsd}]{Dou2018001}. And according to (\ref{eq-2gg}), we have
	\begin{equation*}
	\widehat{\mathcal{G}}\cong\mathcal{G}.
	\end{equation*}
	Thence $\widehat{G}$ is also a slice-domain of existence of $f$, $\varphi$ is invertible, and
	\begin{equation*}
	f|_{\widehat{G}}=\widehat{f}.
	\end{equation*}
	
	We notice that $p\sim q$, i.e., $\varphi(p)=\varphi(q)$. It follows that
	\begin{equation*}
	p=\varphi^{-1}\circ\varphi(p)=\varphi^{-1}\circ\varphi(q)=q.
	\end{equation*}
\end{proof}

Let $f_0:\mathbb{R}^+\rightarrow\mathbb{R}$ be the square root function, defined by
\begin{equation*}
f_0(x):=\sqrt{x},
\end{equation*}
We set
\begin{equation*}
A:=\{yi\in\mathbb{C}:y\in\mathbb{R}\ \mbox{and}\ y\le 0\}\qquad\mbox{and}\qquad\Omega:=\mathbb{C}\backslash A.
\end{equation*}
Then for each $I\in\mathbb{S}$, there exists a holomorphic extension
\begin{equation*}
f^{(I)}_0:\Omega^I\rightarrow\mathbb{H}
\end{equation*}
of $f_0$, where
\begin{equation*}
\Omega^I:=P_I(\Omega).
\end{equation*}
We notice that there also exists a slice regular extension
\begin{equation*}
f_1:U_0\rightarrow\mathbb{H}
\end{equation*}
of $f_0$, where
\begin{equation*}
U_0:=\mathbb{H}\backslash(\mathbb{R}^-\cup\{0\}).
\end{equation*}

For each $I\in\mathbb{S}$ and $a\in\mathbb{H}$, we set
\begin{equation*}
\Omega^I_a:=\{(z,I,a)\in\Omega\times\mathbb{S}\times\mathbb{H}:z\in\Omega\},
\end{equation*}
and $\Omega^I_a$ has an induced topology from $\Omega$. We set
\begin{equation*}
E:=\{q\in\mathbb{H}:|q|=1\}\qquad\mbox{and}\qquad X:=\bigcup_{(I,a)\in\mathbb{S}\times E}\Omega^I_a.
\end{equation*}
Let the topology of $X$ be the disjoint union topology. Then we define a function $F:X\rightarrow\mathbb{H}$, by
\begin{equation*}
F((z,I,a)):=f^{(I)}_0\circ P_I(z)\cdot a,\qquad\forall\ I\in\mathbb{S}\ \mbox{and}\ a\in E.
\end{equation*}
We define a map
\begin{equation*}
\pi:X\rightarrow\mathbb{H},\qquad (z,I,a)\mapsto P_I(z),\qquad\forall\ (z,I,a)\in X.
\end{equation*}
We define an equivalence $\sim$ on $X$ by $x\sim y$, if and only if, $F(x)=F(y)$ and $\pi(x)=\pi(y)$, where $x,y\in X$. We notice that, if $x,y\in X$ with $x\sim y$, there exist domains $W_x$ containing $x$ and $W_y$ containing $y$ in $X$, such that
\begin{equation*}
\pi(W_x)=\pi(W_y)\qquad\mbox{and}\qquad F\circ\pi|_{W_x}^{-1}=F\circ\pi|_{W_y}^{-1}.
\end{equation*}
We set
\begin{equation*}
\widehat{X}:=X/\sim.
\end{equation*}
Let $\varphi:X\rightarrow\widehat{X}$ be the quotient map induced by $\sim$, and the topology of $\widehat{X}$ be the quotient space topology. Then there exist functions
\begin{equation*}
\widehat{F}:\widehat{X}\rightarrow\mathbb{H},\qquad\widehat{\pi}:\widehat{X}\rightarrow\mathbb{H}\qquad\mbox{and}\qquad\widehat{x_0}\in\widehat{X},
\end{equation*}
such that
\begin{equation*}
\widehat{F}\circ\varphi=F,\qquad\widehat{\pi}\circ\varphi=\pi\qquad\mbox{and}\qquad\widehat{x_0}=\varphi((1,I,1)),\quad\forall\ I\in\mathbb{S}.
\end{equation*}

\begin{exa}\label{ex-sr}
	$\mathcal{G}_0:=(\widehat{X},\widehat{\pi},\widehat{x_0})$ is a slice-domain of existence of $f_1$ with respect to slice-domain $(U_0,id_{U_0},1)$ over $\mathbb{H}$ with distinguished point.	
\end{exa}

\begin{proof}
	We set
	\begin{equation*}
	\widehat{X}_{\mathbb{R}^+}:=\{q\in\widehat{X}:\widehat{\pi}(q)\in\mathbb{R}^+\}\qquad\mbox{and}\qquad\widehat{X}_{\mathbb{R}^-}:=\{q\in\widehat{X}:-\widehat{\pi}(q)\in\mathbb{R}^+\}.
	\end{equation*}
	Let $q\in\widehat{X}_{\mathbb{R}^+}$, then there exists $a\in E$ with $\widehat{F}(q)=f_1(\pi(q))a$. And for each $I\in\mathbb{S}$ there exist
	\begin{equation*}
	q^{(I)}\in\Omega^I_a
	\end{equation*}
	such that
	\begin{equation*}
	\{q^{(I)}\}=\varphi^{-1}(q)\cap\Omega^I_a\qquad\mbox{and}\qquad\varphi^{-1}(q)=\bigcup_{I\in\mathbb{S}}\{q^{(I)}\}.
	\end{equation*}
	Then
	\begin{equation*}
	V_q:=\bigcup_{I\in\mathbb{S}}\varphi(B^I_a(q^{(I)},|q|))
	\end{equation*}
	is a domain in $\widehat{X}$, and $\widehat{\pi}|_{V_p}:V_p\rightarrow B_\mathbb{H}(\widehat{\pi}(q),|q|)$ is a slice-homeomorphism, where $B^I_a(q^{(I)},|q|)$ is the open ball in $\Omega^I_a$ of radius $|q|$ and center $\widehat{\pi}(q)$.
	
	It is similar when $q\in\widehat{X}_{\mathbb{R}^-}$. There exists $a\in E$ with
	\begin{equation*}
	\widehat{F}(q)=f_1(-\pi(q))a.
	\end{equation*}
	And for each $I\in\mathbb{S}$, there exist
	\begin{equation*}
	q^{(I)}\in\Omega^I_{-Ia},
	\end{equation*}
	such that
	\begin{equation*}
	\{q^{(I)}\}=\varphi^{-1}(q)\cap\Omega^I_{-Ia}\qquad\mbox{and}\qquad\varphi^{-1}(q)=\bigcup_{I\in\mathbb{S}}\{q^{(I)}\}.
	\end{equation*}
	Then
	\begin{equation*}
	V_q:=\bigcup_{I\in\mathbb{S}}\varphi(B^I_{-Ia}(q^{(I)},|q|))
	\end{equation*}
	is a domain in $\widehat{X}$, and $\widehat{\pi}|_{V_p}:V_p\rightarrow B(\widehat{\pi}(q),|q|)$ is a slice-homeomorphism.
	
	If $q\notin\widehat{X}_{\mathbb{R}}$, we set
	\begin{equation*}
	V_q:=\phi(B_{-Ia}^I(q^{(I)},|Im(q)|)).
	\end{equation*}
	We notice that $\widehat{\pi}|_{V_p}:V_p\rightarrow B(\widehat{\pi}(q),|Im(q)|)$ is a slice-homeomorphism. Then $\mathcal{G}_0$ is local slice-homeomorphism.
	
	Following the proof of Proposition \ref{pr-srfid}, we can get $\widehat{X}$ is a Hausdorff space. We notice that for each $p\in E$, there exist $I,J\in\mathbb{S}$ and $x,y\in\mathbb{R}$ with $I\perp J$ and $x^2+y^2=1$, such that $p=x+yI$. Then
	\begin{equation}\label{eq-2pjx}
	p=J(-xJ-yJI)\qquad\mbox{and}\qquad J,\ -xJ-yJI\in\mathbb{S}.
	\end{equation}
	For each $q\in\widehat{X}_{\mathbb{R}^+}$, there exists $a\in E$ with
	\begin{equation*}
	\widehat{F}(q)=f_1(\pi(q))a.
	\end{equation*}
	According to (\ref{eq-2pjx}), there exists $I,J\in\mathbb{S}$ such that
	\begin{equation*}
	a=IJ.
	\end{equation*}
	It is trivial to prove that, there exist a path $\alpha$ in $\widehat{X}_I$ and a path $\beta$ in $\widehat{X}_J$, such that $\alpha\circ\beta$ is a path in $\widehat{X}$, from $\widehat{x}$ to $q$. And we notice that for each $q''\in\widehat{X}$, there exists a point $q'\in\widehat{X}_\mathbb{R}$ and a path in $\widehat{X}$, from $q''$ to $q'$. It follows that $\widehat{X}$ is connected. In fact, there exists a $3$-part path in $\widehat{X}$ from $\widehat{x}$ to $q$, for each $q\in\widehat{X}$. Then $\mathcal{G}_0$ is a slice-domain over $\mathbb{H}$ with distinguished point. And $\widehat{F}$ is a slice regular extension of $f_1$.
	
	Let $\mathcal{G}'=(G',\pi',x')$ be a slice-domain of existence of $f_1$, and we denote the slice regular extension of $f$ by $f'$. For each $q\in G'$, there exist $N\in\mathbb{N}^+$ and an $N$-part path $\gamma^q$ in $G'$ from $x'$ to $q$. We notice that
	\begin{equation*}
	f'_{x'}=\widehat{F}_{\widehat{x}},
	\end{equation*}
	it follows that
	\begin{equation*}
	f'_{\gamma(\frac{1}{N})}=\widehat{F}_{\alpha^q(\frac{1}{N})},
	\end{equation*}
	where
	\begin{equation*}
	\alpha^q:=(\pi\circ\gamma^q)_{\mathcal{G}_0}.
	\end{equation*}
	Then we have
	\begin{equation*}
	f'_q=f'_{\gamma^q(1)}=\widehat{F}_{\alpha^q(1)}
	\end{equation*}
	by induction. We notice that for each $q_1,q_2\in\widehat{X}$ with $f_{q_1}=f_{q_2}$, we have
	\begin{equation*}
	q_1=q_2.
	\end{equation*}
	Then $\alpha^q(1)$ is not depend on the choice of $\gamma^q$. So there exists a map $\varphi:G'\rightarrow\widehat{X}$ with $\varphi(q)=\alpha^q(1)$. We notice that $\varphi$ is the fiber preserving from $\mathcal{G}'$ to $\mathcal{G}_0$. It follows that $\mathcal{G}'\prec\mathcal{G}_0$. Then $\mathcal{G}_0$ is a slice-domain of existence of $f_1$, by \cite[Theorem \ref{th-exsd}]{Dou2018001}.
\end{proof}

We define a path $\alpha$ in $\mathbb{C}$, by
\begin{equation*}
\alpha(t):=e^{\pi it},\qquad\forall\ t\in[0,1].
\end{equation*}
Then
\begin{equation*}
\beta:=(\alpha,\alpha^{(-1)})
\end{equation*}
is a $2$-part path in $\mathbb{C}$. We notice that
\begin{equation*}
\widehat{F}(\alpha^I_{\mathcal{G}_0}(1))=I,\qquad\forall\ I\in\mathbb{S},
\end{equation*}
and since
\begin{equation*}
\beta^K\circ\alpha^{K_2}=\alpha^{K_1},\qquad\forall\ K=(K_1,K_2)\in\mathbb{S}^2
\end{equation*}
then
\begin{equation*}
K_1=\widehat{F}(\alpha^{K_1}_{\mathcal{G}_0}(1))=K_2\widehat{F}(\beta^K_{\mathcal{G}_0}(1)),\qquad\forall\ K=(K_1,K_2)\in\mathbb{S}^2.
\end{equation*}
It follows that
\begin{equation}\label{eq-es}
\widehat{F}(\beta^K_{\mathcal{G}_0}(1))=K_2^{-1}K_1,\qquad\forall\ K=(K_1,K_2)\in\mathbb{S}^2.
\end{equation}
Let $I\in\mathbb{S}$, and set
\begin{equation*}
J:=\eta_2(I).
\end{equation*}
We also can get (\ref{eq-es}) by the Representation Formula \ref{tm-rf}:
\begin{equation*}
\begin{split}
\widehat{F}(\beta^K_{\mathcal{G}_0}(1))=&\zeta(K)\mathcal{M}(J)f(\beta^J_{\mathcal{G}_0}(1))
\\=&(1,K_1,K_2K_1,-K_2)
\left(\begin{matrix}
1&I&-1&-I\\1&I&1&I\\1&-I&1&-I\\1&-I&-1&I
\end{matrix}\right)^{-1}
\left(\begin{matrix}
I^{-1}I\\I^{-1}(-I)\\(-I)^{-1}I\\(-I)^{-1}(-I)
\end{matrix}\right)
\\=&(1,K_1,K_2K_1,-K_2)\frac{1}{4}
\left(\begin{matrix}
1&1&1&1\\-I&-I&I&I\\-1&1&1&-1\\I&-I&I&-I
\end{matrix}\right)
\left(\begin{matrix}
1\\-1\\-1\\1
\end{matrix}\right)
\\=&(1,K_1,K_2K_1,-K_2)\left(\begin{matrix}0\\0\\-1\\0\end{matrix}\right)
\\=&K_2^{-1}K_1
\end{split}
\end{equation*}
for each $K\in\mathbb{S}^2$.

Now we describe another example without proof. Let $f_2:\mathbb{R}^+\rightarrow\mathbb{R}^+$ be the natural logarithm function, i.e.,
\begin{equation*}
f_2(x)=ln(x),\qquad\forall\ x\in\mathbb{R}.
\end{equation*}
There exist a slice regular function $f_3:U_0\rightarrow\mathbb{H}$ with $f_3|_{\mathbb{R}^+}=f_2$, and a holomorphic extension
\begin{equation*}
f^{(I)}_2:\Omega^I\rightarrow\mathbb{H}
\end{equation*}
of $f_2$ for each $I\in\mathbb{S}$. For each $I\in\mathbb{S}$ and $a\in\mathbb{H}$, we define a function $F':X'\rightarrow\mathbb{H}$, by
\begin{equation}\label{eq-2fzi}
F'(z,I,a)\mapsto f_2^{(I)}\circ P_I(z)+a.
\end{equation}
We set
\begin{equation*}
E':=\{q\in\mathbb{H}:Im(q)=0\}\qquad\mbox{and}\qquad X':=\bigcup_{(I,a)\in\mathbb{S}\times E'}\Omega^I_a.
\end{equation*}
We define an equivalence $\cong$ on $X'$ by $x\cong y$, if and only if, $\pi(x)=\pi(y)$ and $F'(x)=F'(y)$. We set
\begin{equation}\label{eq-2xx}
\widetilde{X}:=X'/\cong,
\end{equation}
and let
\begin{equation*}
\phi:X'\rightarrow X'/\cong
\end{equation*}
be the quotient map induced by $\cong$. Then there exist
\begin{equation}\label{eq-2fxh}
\widetilde{F}:\widetilde{X}\rightarrow\mathbb{H}\qquad\mbox{and}\qquad\widetilde{\pi}:\widetilde{X}\rightarrow\mathbb{H},
\end{equation}
such that
\begin{equation*}
\widetilde{F}\circ\phi=F'\qquad\mbox{and}\qquad\widetilde{\pi}\circ\phi=\pi.
\end{equation*}
Then
\begin{equation*}
\mathcal{G}_2:=(\widetilde{X},\widetilde{\pi},\widetilde{x})
\end{equation*}
is a slice-domain of existence of $f_3$, where
\begin{equation}\label{eq-2xpi}
\widetilde{x}:=\phi(1,I,0).
\end{equation}
And $\widetilde{F}$ is a slice regular extension of $f_3$. We notice that
\begin{equation*}
\widetilde{F}(\beta^K_{\mathcal{G}_2(1)})=\pi K_1-\pi K_2,\qquad\forall\ K\in\mathbb{S}^2.
\end{equation*}
Let $I\in\mathbb{S}$, and set
\begin{equation*}
J:=\eta_2(I).
\end{equation*}
We also can get by the Representation Formula:
\begin{equation*}
\begin{split}
\widehat{F}(\beta^K_{\mathcal{G}_2}(1))=&\zeta(K)\mathcal{M}(J)f(\beta^J_{\mathcal{G}_2}(1))
\\=&(1,K_1,K_2K_1,-K_2)
\left(\begin{matrix}
1&I&-1&-I\\1&I&1&I\\1&-I&1&-I\\1&-I&-1&I
\end{matrix}\right)^{-1}
\left(\begin{matrix}
\pi I-\pi I\\\pi I-(-\pi I)\\(-\pi I)-\pi I\\(-\pi I)-(-\pi I)
\end{matrix}\right)
\\=&(1,K_1,K_2K_1,-K_2)\frac{1}{4}
\left(\begin{matrix}
1&1&1&1\\-I&-I&I&I\\-1&1&1&-1\\I&-I&I&-I
\end{matrix}\right)
\left(\begin{matrix}
0\\2\pi I\\-2\pi I\\0
\end{matrix}\right)
\\=&(1,K_1,K_2K_1,-K_2)\left(\begin{matrix}0\\\pi\\0\\\pi\end{matrix}\right)
\\=&\pi K_1-\pi K_2
\end{split}
\end{equation*}
for each $K\in\mathbb{S}^2$.

\begin{defn}
	Let $\mathcal{G}=(G,\pi,x)$ be a slice-domain over $\mathbb{H}$ with distinguished point, and $N\in\mathbb{N}^+$. $\mathcal{G}$ is called $N$-axially symmetric, if the following statements hold
	\begin{enumerate}
		\item $\pi(x)\in\mathbb{R}$
		
		\item For each $\gamma\in\mathcal{P}^N(\mathbb{C})$ and $I\in\mathbb{S}^N$ with $\gamma^I\prec\mathcal{G}$, we have
		\begin{equation*}
		\gamma^K\prec\mathcal{G},\qquad\forall\ K\in\mathbb{S}^N.
		\end{equation*}
	\end{enumerate}
	
	$\mathcal{G}$ is called $\infty$-axially symmetric, if $\mathcal{G}$ is $m$-axially symmetric for each $m\in\mathbb{N}^+$.
\end{defn}

We notice that $\mathcal{G}_0$ and $\mathcal{G}_2$ are $\infty$-axially symmetric.

Let $\mathcal{G}=(G,\pi,x)$ be a slice-domain over $\mathbb{H}$ with distinguished point. For each $I\in\mathbb{S}$ and $y\in G_I$, we denote the connected component in $G_I$ containing $y$, by $G_I^y$. For each $y\in G_\mathbb{R}$, we denote the the connected component in $G_\mathbb{R}$ containing $y$, by $G_\mathbb{R}^y$.

\begin{prop}\label{pr-srip}
	Let $\mathcal{G}=(G,\pi,x)$ be a $\infty$-axially symmetric slice-domain over $\mathbb{H}$ with distinguished point, $I\in\mathbb{S}$ and $f:G_I^x\rightarrow\mathbb{H}$ be a holomorphic function. If there exists a slice regular function $F$ on $G$ with $F|_{G_I^x}=f$. Then
	\begin{equation*}
	F(\gamma^K_{\mathcal{G}}(1))=\zeta(K)\mathcal{M}(\eta_N(I))f(\gamma^{\eta_N(I)}_{\mathcal{G}}(1))
	\end{equation*}
	for each $N\in\mathbb{N}^+$, $\gamma\in\mathcal{P}^N(\mathbb{C})$ and $K\in\mathbb{S}^N$ with $\gamma^K\prec\mathcal{G}$.
\end{prop}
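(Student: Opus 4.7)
The plan is to apply the Representation Formula (Theorem \ref{tm-rf}) with the specific choice $J:=\eta_N(I)$, and then to identify $F$ with $f$ on all lifted endpoints, using the fact that each of them lies inside the single slice-component $G_I^x$.

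First I check that $\eta_N(I)$ satisfies the hypotheses of Theorem \ref{tm-rf}. By Proposition \ref{pr-umm}, the matrix $2^{-N/2}\mathcal{M}(\eta_N(I))$ is unitary, hence $\mathcal{M}(\eta_N(I))$ is invertible; by Propositions \ref{pr-qmi} and \ref{pr-aii}, its rows $\zeta(\eta_N^\imath(I))$ are left linearly independent, i.e., $\eta_N(I)$ is left slice-linearly independent. Since $\mathcal{G}$ is $\infty$-axially symmetric and $\gamma^K\prec\mathcal{G}$, the definition of axial symmetry yields $\gamma^{K'}\prec\mathcal{G}$ for every $K'\in\mathbb{S}^N$; in particular $\gamma^{\eta_N^\imath(I)}\prec\mathcal{G}$ for each $\imath\in\{1,\dots,2^N\}$. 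Hence Theorem \ref{tm-rf} applies and gives
\begin{equation*}
F(\gamma^K_{\mathcal{G}}(1))=\zeta(K)\mathcal{M}(\eta_N(I))^{-1}F(\gamma^{\eta_N(I)}_{\mathcal{G}}(1)).
\end{equation*}

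Next I will argue that $F$ may be replaced by $f$ on the right-hand side. Each component of $\eta_N^\imath(I)$ is $\pm I$, so by Definition \ref{df:2lni} the path $\gamma^{\eta_N^\imath(I)}$ lies entirely in $\mathbb{C}_I$. Its lift $\gamma^{\eta_N^\imath(I)}_{\mathcal{G}}$ therefore projects into $\mathbb{C}_I$, so it lies in $G_I=\pi^{-1}(\mathbb{C}_I)$; since it starts at $x\in G_\mathbb{R}\subset G_I^x$ and $[0,1]$ is connected, its image is contained in the connected component $G_I^x$. The hypothesis $F|_{G_I^x}=f$ then gives $F(\gamma^{\eta_N^\imath(I)}_{\mathcal{G}}(1))=f(\gamma^{\eta_N^\imath(I)}_{\mathcal{G}}(1))$ componentwise, and substitution into the displayed equation finishes the proof. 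If an explicit form of the inverse is desired, Proposition \ref{pr-umm} supplies $\mathcal{M}(\eta_N(I))^{-1}=\tfrac{1}{2^N}\overline{\mathcal{M}(\eta_N(I))^T}$.

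The only step that is not purely mechanical is the path-lifting argument showing $\gamma^{\eta_N^\imath(I)}_{\mathcal{G}}([0,1])\subset G_I^x$: this is precisely where the hypothesis that $f$ is defined only on the single connected component $G_I^x$ (rather than on all of $G_I$) is used in an essential way, and it is what forces us to restrict the choice of lifting matrix to $\eta_N(I)$ instead of an arbitrary left slice-linearly independent $J$.
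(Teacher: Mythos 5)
Your proof is correct and follows essentially the same route as the paper: the $\infty$-axial symmetry supplies $\gamma^{\eta_N^\imath(I)}\prec\mathcal{G}$ for all $\imath$, and the conclusion is then the Representation Formula \ref{tm-rf} applied with $J=\eta_N(I)$ --- the extra details you supply (left slice-linear independence of $\eta_N(I)$ via Propositions \ref{pr-umm}, \ref{pr-qmi} and \ref{pr-aii}, and the containment of the lifted paths in $G_I^x$ so that $F$ may be replaced by $f$) are exactly what the paper's one-line proof leaves implicit. Note only that the displayed formula in the statement omits the inverse on $\mathcal{M}(\eta_N(I))$; this is a typo in the paper, since Theorem \ref{tm-rf} and your derivation both give $\mathcal{M}(\eta_N(I))^{-1}$.
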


\begin{proof}
	We notice that $\mathcal{G}$ is $\infty$-axially symmetric and $K\in\mathbb{S}^N$, it follows that
	\begin{equation*}
	\gamma^{\eta^\imath_N(I)}\prec\mathcal{G},\qquad\forall\ \imath\in\{1,2,...,2^N\}.
	\end{equation*}
	Then this proposition immediately from the Representation Formula \ref{tm-rf}.
\end{proof}

Let $\mathcal{G}=(G,\pi,x)$ be a $\infty$-axially symmetric slice-domain over $\mathbb{H}$ with distinguished point, $I\in\mathbb{S}$ and $q\in\mathcal{G}$. Thanks to \cite[Theorem \ref{th-efpsp}]{Dou2018001}, there exists $N\in\mathbb{N}^+$, $K\in\mathbb{S}^N$ and $\gamma\in\mathcal{P}^N(\mathbb{C})$, such that $\gamma^K_{\mathcal{G}}$ is an $N$-part path in $G$ from $x$ to $q$. If $F$ is a slice regular function on $G$. According Proposition \ref{pr-srip}, $F$ is determined by $F|_{G^x_I}$. Conversely, if $f:G_I^x\rightarrow\mathbb{H}$ is a holomorphic function, does there exist a slice regular $F$ on $G$ such that
\begin{equation*}
F|_{G^x_I}=f?
\end{equation*}

We attempt to define a slice regular $F$ on $G$, by Representation Formula \ref{tm-rf}, i.e.,
\begin{equation}\label{eq-rficrs}
F(\gamma^K_{\mathcal{G}}(1))=\zeta(K)\mathcal{M}(\eta_N(I)))f(\gamma^{\eta_N(I)}_{\mathcal{G}}(1)),
\end{equation}
for each $N\in\mathbb{N}^+$, $\gamma\in\mathcal{P}^N(\mathbb{C})$, and $K\in\mathbb{S}^N$ with $\gamma^K\prec\mathcal{G}$. However $F$ constructed by (\ref{eq-rficrs}) may be a multivalued function. For example, there exists a holomorphic function $f_4:\widetilde{X}_I^{\widetilde{x}}\rightarrow\mathbb{H}$ such that
\begin{equation*}
f_4\circ\widetilde{\pi}|_{\widetilde{X}_{\mathbb{R}}^{\widetilde{x}}}^{-1}=f_0,
\end{equation*}
where $\widetilde{X}$ is defined in (\ref{eq-2xx}), $\widetilde{\pi}$ is defined by (\ref{eq-2fxh}) and $\widetilde{x}$ is defined in (\ref{eq-2xpi}). We define a path $\alpha^{(m)}$ in $\mathbb{C}$, by
\begin{equation*}
\alpha^{(m)}(t):=e^{m\pi it},\qquad\forall\ m\in\mathbb{N}^+\ \mbox{and}\ t\in[0,1].
\end{equation*}
We define a $1$-part path
\begin{equation*}
\gamma_1:=\alpha^{(5)}
\end{equation*}
in $\mathbb{C}$, and a $2$-part path
\begin{equation*}
\gamma_2:=(\alpha^{(4)},\alpha^{(3)})
\end{equation*}
in $\mathbb{C}$. Let $J_1,J_2\in\mathbb{S}$ with $J_1\perp J_2$. We set
\begin{equation*}
K:=\frac{4J_1+3J_2}{5}\in\mathbb{S}\qquad\mbox{and}\qquad J:=(J_1,J_2)\in\mathbb{S}^2.
\end{equation*}
Since paths $\alpha^{(m)}$, $m\in\mathbb{N}^+$, do not pass through the origin of $\mathbb{C}$, it follows that
\begin{equation*}
(\gamma_1)^K\prec\mathcal{G}_2\qquad\mbox{and}\qquad (\gamma_2)^J\prec\mathcal{G}_2.
\end{equation*}
Then we set
\begin{equation*}
q_1:=(\gamma_1)^K_{\mathcal{G}_2}(1)\qquad\mbox{and}\qquad q_2:=(\gamma_2)^J_{\mathcal{G}_2}(1).
\end{equation*}
Suppose that there exists a slice regular function $F_4$ on $\widetilde{X}$ with
\begin{equation*}
F_4|_{\widetilde{X}_I^{\widetilde{x}}}=f_4,
\end{equation*}
Since
\begin{equation*}
\widetilde{F}_{q_1}=(F'+4\pi K)_{(-1,K,0)}=(F'-\pi K+4\pi J_1+3\pi J_2)_{(-1,K,0)}=\widetilde{F}_{q_2},
\end{equation*}
\begin{equation*}
\widetilde{\pi}(q_1)=P_K(\gamma_1(1))=P_K(-1)=-1=P_{J_2}(-1)=P_{J_2}(\gamma_2(1))=\widetilde{\pi}(q_2),
\end{equation*}
and Proposition \ref{pr-srfid}, it follows that
\begin{equation*}
q_1=q_2,
\end{equation*}
where $F'$ is defined by (\ref{eq-2fzi}) and $\widetilde{F}$ is defined by (\ref{eq-2fxh}). We notice that
\begin{equation*}
F_4(q_2)=F_4(q_1)=f_4(q_1)=\sqrt{1}K^5=K,
\end{equation*}
and
\begin{equation*}
\begin{split}
F_4(q_2)=&\zeta(J)\mathcal{M}(\eta_2(I))F_4((\gamma_1)_{\mathcal{G}_2}^{\eta_2(I)}(1))
\\=&(1,J_1,J_2 J_1,-J_2)
\left(\begin{matrix}
1&I&-1&-I\\1&I&1&I\\1&-I&1&-I\\1&-I&-1&I
\end{matrix}\right)^{-1}
\left(\begin{matrix}
f_4((\gamma_1)^{(I,I)}_{\mathcal{G}_2}(1))\\f_4((\gamma_1)^{(I,-I)}_{\mathcal{G}_2}(1))\\f_4((\gamma_1)^{(-I,I)}_{\mathcal{G}_2}(1))\\f_4((\gamma_1)^{(-I,-I)}_{\mathcal{G}_2}(1))
\end{matrix}\right)
\\=&(1,J_1,J_2 J_1,-J_2)\frac{1}{4}
\left(\begin{matrix}
1&1&1&1\\-I&-I&I&I\\-1&1&1&-1\\I&-I&I&-I
\end{matrix}\right)
\left(\begin{matrix}
\sqrt{1}I^4I^3\\\sqrt{1}I^4(-I)^3\\\sqrt{1}(-I)^4 I^3\\\sqrt{1}(-I)^4(-I)^3
\end{matrix}\right)
\\=&(1,J_1,J_2 J_1,-J_2)\frac{1}{4}
\left(\begin{matrix}
1&1&1&1\\-I&-I&I&I\\-1&1&1&-1\\I&-I&I&-I
\end{matrix}\right)
\left(\begin{matrix}
-I\\I\\-I\\I
\end{matrix}\right)
\\=&(1,J_1,J_2 J_1,-J_2)
\left(\begin{matrix}
0\\0\\0\\1
\end{matrix}\right)
\\=&-J_2,
\end{split}
\end{equation*}
which is a contradiction. So $f_4$ can not ``extend slice regularly" to $\widetilde{X}$.

\section{Holomorphic stem system}\label{sc-hss}

The notion of the stem function was introduced by Fueter in \cite{Fueter1934/35}. Ghiloni and Perotti extend the stem function to develop the theory of slice regular functions on the real alternative *-algebra in \cite{Ghiloni2011001}. In this section, we will define a similar notion, so-called holomorphic stem systems, to introduce the $*$-product of slice regular functions on Riemann slice-domains (see Section \ref{sc-psrf}).

\begin{defn}\label{df-css}
	Let $\mathcal{G}$ be a slice-domain over $\mathbb{H}$ with distinguished point, $N\in\mathbb{N}^+$, and $\gamma$ be an $N$-part path in $\mathbb{C}$. We say that $\gamma$ is contained in $\mathcal{G}$ (denoted by $\gamma\prec\mathcal{G}$), if $\gamma^I\prec\mathcal{G}$ for each $I\in\mathbb{S}^N$.
	
	We denote by $\mathcal{P}^{\infty}_{\mathbb{C}}(\mathcal{G})$ (resp. $\mathcal{P}^N_{\mathbb{C}}(\mathcal{G})$) the set of all the finite-part (resp. $N$-part) paths in $\mathbb{C}$ contained in $\mathcal{G}$.
\end{defn}

Let $\mathcal{G}=(G,\pi,x)$ be a slice-domain over $\mathbb{H}$ with distinguished point, if $\pi(x)\notin\mathbb{R}$, then there exists $I\in\mathbb{S}$ such that $\pi(x)\in\mathbb{C}_I$. Let $K\in\mathbb{S}$ with $K\perp I$, then
\begin{equation*}
\gamma^K(0)\in\mathbb{C}_K\backslash\mathbb{R},\qquad\forall\ \gamma\in\mathcal{P}^{\infty}(\mathbb{C}).
\end{equation*}
It follows that
\begin{equation*}
\gamma^K(0)\neq\pi(x)\quad\mbox{and}\quad\gamma\nprec\mathcal{G},\qquad\forall\ \gamma\in\mathcal{P}^{\infty}(\mathbb{C}).
\end{equation*}
Then
\begin{equation*}
\mathcal{P}^{\infty}_{\mathbb{C}}(\mathcal{G})=\varnothing.
\end{equation*}

\begin{prop}\label{pr-spa}
	Let $\mathcal{G}$ be a $\infty$-axially symmetric slice-domain over $\mathbb{H}$ with distinguished point, $N\in\mathbb{N}^+$, and $\gamma$ be an $N$-part path in $\mathbb{C}$. If there exists $I\in\mathbb{S}^N$ such that $\gamma^I\prec\mathcal{G}$, then $\gamma\prec\mathcal{G}$.
\end{prop}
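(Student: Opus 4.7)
The plan is to observe that this proposition is an almost immediate unpacking of the two relevant definitions. By Definition \ref{df-css}, the assertion $\gamma\prec\mathcal{G}$ for an $N$-part path $\gamma$ in $\mathbb{C}$ means exactly that $\gamma^K\prec\mathcal{G}$ for every $K\in\mathbb{S}^N$. On the other hand, the hypothesis that $\mathcal{G}$ is $\infty$-axially symmetric means by definition that $\mathcal{G}$ is $m$-axially symmetric for every $m\in\mathbb{N}^+$; in particular, for the given $N$.

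So the first step is simply to instantiate the definition of $N$-axially symmetric at the given $N$. Part (2) of that definition states: whenever there exist $\gamma\in\mathcal{P}^N(\mathbb{C})$ and $I\in\mathbb{S}^N$ with $\gamma^I\prec\mathcal{G}$, one has $\gamma^K\prec\mathcal{G}$ for all $K\in\mathbb{S}^N$. This is precisely the conclusion we need, rephrased via Definition \ref{df-css} as $\gamma\prec\mathcal{G}$.

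The only minor thing worth checking (and it holds automatically) is that $\gamma^K$ makes sense as an $N$-part path in $\mathbb{H}$ for every $K\in\mathbb{S}^N$: the junction points $\gamma_\imath(1)=\gamma_{\imath+1}(0)$ are real by the definition of an $N$-part path in $\mathbb{C}$, and the maps $P_{K_\imath}$ fix $\mathbb{R}$ pointwise, so $\phi_K(\gamma)$ indeed satisfies the $N$-part junction condition in $\mathbb{H}$. There is no real obstacle here; the content of the proposition lies in having set up Definition \ref{df-css} in a way that matches the $\infty$-axial symmetry hypothesis. The proof reduces to a one-line citation of the two definitions.
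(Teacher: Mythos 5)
Your proof is correct and matches the paper's own argument, which is likewise a one-line appeal to Definition \ref{df-css} and the definition of $\infty$-axial symmetry. The extra check that $\gamma^K$ is a genuine $N$-part path in $\mathbb{H}$ is a harmless (and valid) addition the paper leaves implicit.
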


\begin{proof}
	If $\gamma^I\prec\mathcal{G}$, and since $\mathcal{G}$ is $\infty$-axially symmetric, it follows that $\gamma\prec\mathcal{G}$.
\end{proof}

\begin{defn}
	We call a subset $\Gamma$ of $\mathcal{P}^{\infty}(\mathbb{C})$ is radial, if $\gamma[t]\in\Gamma$ and $\gamma[t^-]\in\Gamma$, for each $\gamma\in\Gamma$ and $t\in[0,1]$.
\end{defn}

\begin{defn}
	Let $\Omega$ be an open set in $\mathbb{C}$ and $N\in\mathbb{N}^+$. A function $F:\Omega\rightarrow M_{2^N\times 1}(\mathbb{H})$ is called stem ($N$-)holomorphic or (stem finite-)holomorphic, if $f$ has continuous partial derivatives and satisfies
	\begin{equation*}
	(\frac{\partial}{\partial x}+\sigma_N\frac{\partial}{\partial y})F(x+yi)=0_{2^N\times 1}
	\end{equation*}
	for each $x,y\in\mathbb{R}$ with $x+yi\in\Omega$.
	
	We denote by $\mathcal{SH}^N(\Omega)$ the set of all the stem $N$-holomorphic function on $\Omega$ for each $N\in\mathbb{N}^+$. And we denote by $\mathcal{SH}^{\infty}(\Omega)$ the set of all the stem finite-holomorphic functions.
\end{defn}

Let $x_0\in\mathbb{R}$. We recall $\mathcal{P}^{\infty}_{x_0}(\mathbb{C})$ is the set of all finite-part path in $\mathbb{C}$ with an initial point $x_0$. If $\Gamma$ is a radial subset of $\mathcal{P}^{\infty}_{x_0}(\mathbb{C})$, then
\begin{equation*}
\gamma[0]=\alpha,\qquad\forall\ \gamma\in\Gamma,
\end{equation*}
where $\alpha$ is a $1$-part path in $\mathbb{C}$ with
\begin{equation}\label{eq-2asx}
\alpha(s):=x_0,\qquad\forall\ s\in[0,1].
\end{equation}
We say that $\alpha$ is the initial path of $\Gamma$.

For each $q\in\mathbb{H}$, $z\in\mathbb{C}$, $r\in\mathbb{R}^+$, and $I\in\mathbb{S}$, we set
\begin{equation*}
\begin{split}
\mathbb{B}_{\mathbb{H}}(q,r):=\{p\in\mathbb{H}:|p-q|<r\},\qquad
\mathbb{B}_{\mathbb{C}}(z,r):=\{p\in\mathbb{C}:|p-z|<r\},
\\\mathbb{B}_I(q,r):=\mathbb{B}_\mathbb{H}(q,r)\cap\mathbb{C}_I\qquad \mbox{and}\qquad \mathbb{B}_{\mathbb{R}}(q,r):=\{p\in\mathbb{R}:|p-q|<r\}.
\end{split}
\end{equation*}

Let $\Gamma$ be a subset of $\mathcal{P}^{\infty}(\mathbb{C})$, and $r:\Gamma\rightarrow\mathbb{R}^+$ be a map, we set
\begin{equation*}
\mathbb{B}^r_\gamma:=B_{\mathbb{C}}(\gamma(1),r(\gamma)),\qquad\forall\ \gamma\in\Gamma,
\end{equation*}
and
\begin{equation*}
\mathcal{SH}^{\infty}(\mathbb{B}_{\Gamma}^r):=\bigcup_{\gamma\in\Gamma}\mathcal{SH}^{\infty}(\mathbb{B}_{\gamma}^r).
\end{equation*}

Let $(G,\pi)$ be a slice-domain over $\mathbb{H}$, and $\Gamma$ be a subset of $\mathcal{P}^{\infty}(\mathbb{C})$ (resp. $\mathcal{P}^{\infty}(\mathbb{H})$ or $\mathcal{P}^{\infty}(G)$). We set
\begin{equation*}
\Gamma^N:=\Gamma\cap\mathcal{P}^N(\mathbb{C})\qquad(\mbox{resp.}\quad\Gamma^N:=\Gamma\cap\mathcal{P}^N(\mathbb{H})\quad\mbox{or}\quad\Gamma^N:=\Gamma\cap\mathcal{P}^N(G)).
\end{equation*}

\begin{defn}\label{df-hssy}
	Let $x_0\in\mathbb{R}$, $\Gamma$ be a radial subset of $\mathcal{P}^{\infty}_{x_0}(\mathbb{C})$, $r:\Gamma\rightarrow\mathbb{R}^+$ be a map, and $F:\Gamma\rightarrow\mathcal{SH}^{\infty}(\mathbb{B}_{\Gamma}^r)$ be a map. $(F,r,\Gamma)$ is called a (holomorphic) stem system, if the following statements hold:
	\begin{enumerate}
		\item (Local holomorphy) For each $N\in\mathbb{N}^+$ and $\gamma\in\Gamma^N$, we have
		\begin{equation*}
		F_\gamma\in\mathcal{SH}^N(\mathbb{B}_{\gamma}^r),
		\end{equation*}
		where
		\begin{equation*}
		F_\gamma:=F(\gamma).
		\end{equation*}
		
		\item (Local compatibility) For each $N\in\mathbb{N}^+$, $N$-part path $\gamma$ in $\Gamma$, $m\in\{1,2,...,N\}$, and $t_1,t_2\in[\frac{m-1}{N},\frac{m}{N}]$ with $t_1<t_2$. If there exists a real number $t\in[t_1,t_2]$ such that
		\begin{equation*}
		\gamma([t_1,t])\subset\mathbb{B}_{\gamma[t_1]}^r\qquad\mbox{and}\qquad\gamma([t,t_2])\subset\mathbb{B}_{\gamma[t_2^-]}^r,
		\end{equation*}
		then $F_{\gamma[t_1]}$ and $F_{\gamma[t_2^-]}$ coincide on $\mathbb{B}_{\gamma[t_1]}^r\cap\mathbb{B}_{\gamma[t_2^-]}^r$.
		
		\item (Axial compatibility) For each $N\in\mathbb{N}^+$, $\gamma\in\Gamma^N$, $m\in\{1,2,...,N-1\}$ and
		\begin{equation*}
		x\in\mathbb{B}_{\gamma[\frac{m}{N}]}^r\cap\mathbb{B}_{\gamma[\frac{m}{N}^-]}^r\cap\mathbb{R},
		\end{equation*}
		we have
		\begin{equation*}
		F_{\gamma[\frac{m}{N}]}(x)=(F_{\gamma[\frac{m}{N}^-]}(x)^T,0_{1\times 2^m})^T.
		\end{equation*}
		
		\item (Initial compatibility) There exists a function $f:\mathbb{B}_{\gamma[0]}^r\cap\mathbb{R}\rightarrow\mathbb{H}$ such that
		\begin{equation*}
		F_{\gamma[0]}(x)=(f(x),0)^T
		\end{equation*}
		for each $x\in\mathbb{B}_{\gamma[0]}^r\cap\mathbb{R}$ and $\gamma\in\Gamma$.
	\end{enumerate}
\end{defn}

Let  $\Gamma$ be a subset of $\mathcal{P}^{\infty}(\mathbb{C})$, and $r:\Gamma\rightarrow\mathbb{R}^+$ be a map. For each $N\in\mathbb{N}^+$, $\gamma\in\Gamma^N$ and $K\in\mathbb{S}^N$, we set
\begin{equation*}
(\mathbb{B}_{\gamma}^r)^K:=P_{K_N}(\mathbb{B}_{\gamma}^r).
\end{equation*}

\begin{prop}\label{pr-sfsl}
	Let $(F,r,\Gamma)$ be a holomorphic stem system, $N\in\mathbb{N}^+$, $\gamma\in\Gamma^N$, and $K\in\mathbb{S}^N$. Then $F_{\gamma}^K:(\mathbb{B}_{\gamma}^r)^K\rightarrow\mathbb{H}$ is a holomorphic function defined by
	\begin{equation}\label{eq-2fkr}
	F_{\gamma}^K(x+yK_N):=\zeta(K)F_\gamma(x+yi)
	\end{equation}
	for each $x,y\in\mathbb{R}$ with $x+yi\in\mathbb{B}_\gamma^r$.
\end{prop}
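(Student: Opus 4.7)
The approach is a direct verification that the formula (\ref{eq-2fkr}) defines an unambiguous, continuously differentiable function on $(\mathbb{B}_\gamma^r)^K$ which satisfies the Cauchy-Riemann equation with respect to $K_N$. The proof has three stages: well-definedness and smoothness, reduction of the holomorphy condition to an algebraic identity, and verification of that identity.

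First, I would observe that since $K_N^2=-1$ the elements $1$ and $K_N$ are $\mathbb{R}$-linearly independent in $\mathbb{H}$, so each point $w\in\mathbb{C}_{K_N}$ admits a unique representation $w=x+yK_N$ with $x,y\in\mathbb{R}$. The map $P_{K_N}:\mathbb{C}\rightarrow\mathbb{C}_{K_N}$ is an $\mathbb{R}$-linear homeomorphism, so $(\mathbb{B}_\gamma^r)^K=P_{K_N}(\mathbb{B}_\gamma^r)$ is an open disk in $\mathbb{C}_{K_N}$, and (\ref{eq-2fkr}) therefore defines a bona fide function on this domain. Because $\zeta(K)\in M_{1\times 2^N}(\mathbb{H})$ is a constant row vector independent of $x,y$, continuous differentiability of $F_\gamma^K$ is inherited from that of the stem-holomorphic function $F_\gamma$.

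Second, I would compute the Cauchy-Riemann operator directly. Since $\zeta(K)$ is constant,
\begin{equation*}
\left(\frac{\partial}{\partial x}+K_N\frac{\partial}{\partial y}\right)F_\gamma^K(x+yK_N)=\zeta(K)\frac{\partial F_\gamma}{\partial x}(x+yi)+K_N\zeta(K)\frac{\partial F_\gamma}{\partial y}(x+yi).
\end{equation*}
Applying the stem-holomorphy condition $\partial_x F_\gamma+\sigma_N\partial_y F_\gamma=0$ reduces this to $\bigl(K_N\zeta(K)-\zeta(K)\sigma_N\bigr)\partial_y F_\gamma(x+yi)$, so the proof is finished once one knows
\begin{equation*}
\zeta(K)\sigma_N=K_N\zeta(K),\qquad\forall\,K\in\mathbb{S}^N.
\end{equation*}

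Third, this algebraic identity is the crux; it encodes the compatibility between the stem Cauchy-Riemann operator and slice holomorphy along the direction $K_N$, and I would prove it by induction on $N$. The base case $N=1$ reduces, via $\zeta((K_1))=(1,K_1)$, to a direct $2\times 2$ computation. For the inductive step, I would use the fact that $\zeta(K)$ for $K\in\mathbb{S}^{N+1}$ splits into two $2^N$-blocks indexed by the top binary digit $m_{N+1}\in\{0,1\}$ (one block a copy of $\zeta$ evaluated on a shorter tuple, the other a scalar multiple involving $K_{N+1}K_N$), and the matrix $\sigma_{N+1}$ has a corresponding block structure inherited from $\sigma_N$; substituting these decompositions and applying the inductive hypothesis yields the identity. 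The bookkeeping is reminiscent of the block manipulation used in Proposition \ref{pr-umm}.

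The main obstacle is the last step: tracking the recursive block structure of $\sigma_N$ and the binary indexing of $\zeta(K)$. If this identity has already been recorded in the companion paper \cite{Dou2018001} (as seems natural from the way $\sigma_N$ is introduced), then the present proposition is essentially a one-line consequence of the computation in the second stage; otherwise the induction described above is routine but somewhat intricate.
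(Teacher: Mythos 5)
Your proposal is correct and follows essentially the same route as the paper: the paper likewise reduces the claim to the commutation identity $K_N\zeta(K)=\zeta(K)\sigma_N$, which it cites directly from the companion paper (\cite[Proposition \ref{pr-ct}]{Dou2018001}) rather than reproving it, exactly as you anticipated in your final remark. The inductive proof of that identity you sketch is not needed here but is consistent with how the analogous tensor identity (\ref{eq-tt}) is justified later in the paper.
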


\begin{proof}
	According to \cite[Proposition \ref{pr-ct}]{Dou2018001}, we have
	\begin{equation*}
	\begin{split}
	(\frac{\partial}{\partial x}+K_N\frac{\partial}{\partial y})F_{\gamma}^K(x+yK_N)
	=&(\frac{\partial}{\partial x}+K_N\frac{\partial}{\partial y})\zeta(K)F_\gamma(x+yi)
	\\=&\zeta(K)(\frac{\partial}{\partial x}+\sigma_N\frac{\partial}{\partial y})F_\gamma(x+yi)
	\\=&\zeta(K)0_{2^N\times 1}
	\\=&0
	\end{split}
	\end{equation*}
	for each $N\in\mathbb{N}^+$, $\gamma\in\Gamma^N$, and $x,y\in\mathbb{R}$ with $x+yi\in\mathbb{B}_\gamma^r$.
\end{proof}

\begin{defn}\label{df-hss}
	Let $\mathcal{F}=(F,r,\Gamma)$ be a holomorphic stem system, and $\mathcal{G}=(G,\pi,x)$ be a slice-domain over $\mathbb{H}$ with distinguished point. We say that $\mathcal{F}$ is a holomorphic stem system on $\mathcal{G}$ (denoted by $\mathcal{F}\prec\mathcal{G}$), if there exists a slice regular function $f$ on $G$, such that the following statements hold:
	\begin{enumerate}
		\item $\gamma\prec\mathcal{G}$ for each $\gamma\in\Gamma$.
		
		\item For each $N\in\mathbb{N}^+$, $\gamma\in\Gamma^N$ and $K\in\mathbb{S}^N$, there exists a domain $U^K_\gamma$ in $G_{K_N}$ containing $\gamma^K_{\mathcal{G}}(1)$, such that $\pi|_{U^K_\gamma}:U^K_\gamma\rightarrow(\mathbb{B}_{\gamma}^r)^K$ is a homeomorphism with respect to topologies $\tau(G_{K_N})$ and $\tau(\mathbb{C}_{K_N})$, and
		\begin{equation}\label{eq-2fpi}
		f\circ\pi|_{U^K_\gamma}^{-1}=F_{\gamma}^K,
		\end{equation}
		where $F_{\gamma}^K$ is defined by (\ref{eq-2fkr}).
	\end{enumerate}
	We call $f$ the slice regular function induced by $\mathcal{F}$ on $\mathcal{G}$ and write
	\begin{equation*}
	\mathcal{I}_{\mathcal{G}}[\mathcal{F}]:=f.
	\end{equation*}
	We denote by $\mathcal{HS}({\mathcal{G}})$ the set of all the holomorphic stem systems on $\mathcal{G}$. We define a map
	\begin{equation*}
	\mathcal{I}_{\mathcal{G}}:\mathcal{HS}(\mathcal{G})\rightarrow\mathcal{SR}(G),\qquad\mathcal{L}\mapsto\mathcal{I}_{\mathcal{G}}[\mathcal{L}],\qquad\forall\ \mathcal{L}\in\mathcal{HS}(\mathcal{G}).
	\end{equation*}
\end{defn}

	Let $\mathcal{F}$ be a holomorphic stem system, and $\mathcal{G}$ be a slice-domain over $\mathbb{H}$ with distinguished point with $\mathcal{F}\prec\mathcal{G}$. According to \cite[Identity Principle \ref{th-idh}]{Dou2018001}, the slice regular function induced by $\mathcal{F}$ on $\mathcal{G}$ is unique. So $\mathcal{I}_{\mathcal{G}}$ is well defined.

\begin{thm}\label{th-sfsr}
	For each holomorphic stem system $\mathcal{F}$, there exists a slice-domain $\mathcal{G}$ over $\mathbb{H}$ with distinguished point, such that $\mathcal{F}\prec\mathcal{G}$.
\end{thm}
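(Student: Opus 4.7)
The plan is to construct $\mathcal{G}=(G,\pi,x)$ together with a slice regular function $f$ on $G$ realizing $\mathcal{F}$, and then set $\mathcal{I}_{\mathcal{G}}[\mathcal{F}]:=f$. The construction proceeds as a quotient of a disjoint union of holomorphic disks, modelled on the quotient constructions in Example \ref{ex-sr} and Proposition \ref{pr-srfid}.

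First, for each $N\in\mathbb{N}^+$, $\gamma\in\Gamma^N$, and $K\in\mathbb{S}^N$, set $D_\gamma^K:=\{(z,\gamma,K):z\in(\mathbb{B}_\gamma^r)^K\}$, topologised as a copy of $(\mathbb{B}_\gamma^r)^K\subset\mathbb{C}_{K_N}$, and let
\[
X:=\bigsqcup_{N\in\mathbb{N}^+}\ \bigsqcup_{\gamma\in\Gamma^N,\,K\in\mathbb{S}^N}D_\gamma^K
\]
with the disjoint union topology. Define $\pi_X(z,\gamma,K):=z$ and $f_X(z,\gamma,K):=F_\gamma^K(z)$, the latter being holomorphic on each $(\mathbb{B}_\gamma^r)^K$ by Proposition \ref{pr-sfsl}. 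Introduce on $X$ the smallest equivalence relation $\sim$ generated by the three families of identifications imposed by Definition \ref{df-hssy}: local compatibility identifies overlapping patches along the same slice and path; axial compatibility identifies patches of adjacent path-lengths at real crossings; and initial compatibility identifies $(x_0,\alpha,I_1)$ with $(x_0,\alpha,I_2)$ for all $I_1,I_2\in\mathbb{S}$, where $\alpha$ is the initial constant path of $\Gamma$. Set $G:=X/{\sim}$ with the quotient topology and let $\varphi:X\to G$ be the quotient map; since $\pi_X$ and $f_X$ are constant on classes by construction, they descend to $\pi:G\to\mathbb{H}$ and $f:G\to\mathbb{H}$. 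Define the distinguished point $x:=\varphi((x_0,\alpha,I_0))$ for any $I_0\in\mathbb{S}$.

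Next one verifies that $\mathcal{G}:=(G,\pi,x)$ is a slice-domain over $\mathbb{H}$ with distinguished point. Connectedness of $G$ follows by lifting a path $\gamma\in\Gamma^N$ segment by segment: since $\Gamma$ is radial, each partial $\gamma[t]$ belongs to $\Gamma$, and the patches $\varphi(D_{\gamma[t]}^{K'})$ with $K'=(K_1,\ldots,K_{\lceil Nt\rceil})$ can be concatenated into a path in $G$ from $x$ to $[(\gamma(1),\gamma,K)]$. For the Hausdorff property one adapts the argument of Proposition \ref{pr-srfid}: two distinct classes with a common projection have distinct germs of $f$, so the identity principle \cite[Theorem \ref{pt-ipsd}]{Dou2018001} separates them. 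The local slice-homeomorphism property of $\pi$ is immediate on non-real patches $\varphi(D_\gamma^K)$, which map bijectively and continuously onto the slice-open disk $(\mathbb{B}_\gamma^r)^K\subset\mathbb{C}_{K_N}$; at a real crossing one glues patches for every $K\in\mathbb{S}^N$ into a single slice-open neighborhood, the key point being that axial compatibility, combined with $F_\gamma^K(x)=\zeta(K)F_\gamma(x)$ at real $x$, makes these fuse consistently. Slice regularity of $f$ on $G$ then follows from Proposition \ref{pr-sfsl}: each $F_\gamma^K$ is holomorphic on its slice, so $f$ restricted to each $G_I$ is locally holomorphic. Condition $(1)$ of Definition \ref{df-hss} is witnessed by the segment-by-segment lift used for connectedness, and condition $(2)$ with $U_\gamma^K:=\varphi(D_\gamma^K)$ holds by construction of $f$.

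The main obstacle will be verifying the Hausdorff property and the local slice-homeomorphism at real points simultaneously. At such a point one must fuse patches indexed by all $K\in\mathbb{S}^N$ into one slice-open set whose projection covers a full slice-open neighborhood in $\mathbb{H}$, and one must confirm that the three compatibility conditions of Definition \ref{df-hssy}, viewed through the identity $F_\gamma^K(x)=\zeta(K)F_\gamma(x)$, are precisely what is needed to make this fusion well-defined and injective on germs. Everything else reduces to reusing the quotient arguments of Proposition \ref{pr-srfid} and the identity principle.
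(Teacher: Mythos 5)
Your construction-by-gluing is not the route the paper takes, and as written it has a genuine gap at exactly the point you flag as ``the main obstacle''. The equivalence relation you generate from the three compatibility conditions of Definition \ref{df-hssy} only ever identifies points on disks attached to truncations $\gamma[t_1]$, $\gamma[t_2^-]$ of a \emph{single} path; it never identifies $(z,\gamma,K)$ with $(z,\gamma,K')$ for two different tuples $K,K'\in\mathbb{S}^N$ attached to the same full-length $\gamma\in\Gamma^N$, nor points on disks attached to two distinct paths of $\Gamma$ with overlapping terminal balls. Consequently, at a real point $z\in\mathbb{B}_\gamma^r\cap\mathbb{R}$ of a terminal disk the class $\varphi((z,\gamma,K))$ has no neighborhood mapping onto a slice-open set of $\mathbb{H}$ (a subset of a single slice $\mathbb{C}_{K_N}$ containing a real point is not slice-open), so $\pi$ fails to be a local slice-homeomorphism there, and the data needed to fuse the disks $D_\gamma^{K'}$ across slices is simply not among your generators. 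Your Hausdorff argument also does not go through: the separation argument of Proposition \ref{pr-srfid} applies to the relation ``same projection and same germ'', which is \emph{not} the relation you defined; for the generated relation two distinct classes over the same point can carry the same germ, and chains of identifications can produce gluing loci whose relative boundary lies inside the intersection of two disks, yielding non-separable pairs.

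The paper avoids all of this. It uses only the initial compatibility condition to assemble a single well-defined slice regular function $f_0$ on the ball $\mathbb{B}_0=B_{\mathbb{H}}(x_1,r_\alpha)$ around the common initial point, takes $\mathcal{G}$ to be a slice-domain of existence of $f_0$ with respect to $(\mathbb{B}_0,\mathrm{id}_{\mathbb{B}_0},x_1)$ (whose existence, Hausdorffness and local slice-homeomorphism property are already established in \cite[Theorem \ref{th-exsd}]{Dou2018001}), and then proves that every $\gamma\in\Gamma$ satisfies the conditions of Definition \ref{df-hss} by a connectedness argument: one considers the infimum $t_1$ of parameters where the conditions fail and uses local compatibility (to rule out $\{Nt_1\}\neq 0$ and $t_1=0$) and axial compatibility (to cross the nodes $t_1=m/N$) to derive a contradiction. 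In other words, the hard topological work is delegated to the already-constructed maximal continuation, and the compatibility conditions are used only to \emph{propagate} the stem data into it, not to \emph{build} the space. If you want to salvage a direct gluing construction, you would have to replace your generated relation by ``same projection and same germ'' (as in Proposition \ref{pr-srfid}) and then re-derive connectedness and condition (2) of Definition \ref{df-hss} from the compatibility conditions, which essentially reproduces the paper's propagation argument anyway.
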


\begin{proof}
	We write $\mathcal{F}=(F,r,\Gamma)$, and let $x_1\in\mathbb{R}$ be the initial point of the paths in $\Gamma$. Let $\alpha$ be the initial path of $\Gamma$, defined by (\ref{eq-2asx}). We define a function $f_0:\mathbb{B}_0\rightarrow\mathbb{H}$, by
	\begin{equation}\label{eq-2fxy}
		f_0(x+yI):=F_\alpha^I(x+yI)=(1,I)F_\alpha(x+yi)
	\end{equation}
	for each $x,y\in\mathbb{R}$ with $x+yi\in\mathbb{B}_\alpha^r$, and $I\in\mathbb{S}$, where $\mathbb{B}_0:=B_{\mathbb{H}}(x_1,r_{\alpha})$.  According $\mathcal{F}$ is initial compatible, then there exists a function $f_1:\mathbb{B}_\alpha^r\cap\mathbb{R}\rightarrow\mathbb{H}$, such that
	\begin{equation*}
	F_\alpha(x)=(f_1(x),0)^T,\qquad\forall\ x\in\mathbb{B}_\alpha^r\cap\mathbb{R}.
	\end{equation*}
	It follows that
	\begin{equation*}
	f_0(x)=(1,I)F_\alpha(x)=(1,I)(f_1(x),0)^T=f_1(x),\qquad\forall x\in\mathbb{B}_\alpha^r\cap\mathbb{R}
	\end{equation*}
	does not depend on the choice of $I$. Therefore $f_0$ is well defined.
	
	According to Proposition \ref{pr-sfsl}, $f_0$ is a slice regular function on $\mathbb{B}_0$, and
	\begin{equation*}
	\mathcal{G}_0:=(\mathbb{B}_0,id_{\mathbb{B}_0},x_1)
	\end{equation*}
	is a slice-domain over $\mathbb{H}$ with distinguished point. Let $\mathcal{G}=(G,\pi,x_0)$ be a slice-domain of existence of the function $f_0$ with respect to $\mathcal{G}_0$, and $f$ be a slice regular function on $G$ with
	\begin{equation}\label{eq-2fbz}
	f|_{\mathbb{B}_0}=f_0.
	\end{equation}
	Let $\Gamma'$ be a subset of $\Gamma$, such that $\gamma\in\Gamma'$, if and only if the following properties hold:
	\begin{enumerate}[i)]
		\item $\gamma\in\Gamma$ and $\gamma\prec\mathcal{G}$.
		
		\item\label{it-2nki} For each $N\in\mathbb{N}^+$ and $K\in\mathbb{S}^N$. If $\gamma\in\Gamma^N$, then there exists a domain $U^K_\gamma$ in $G_{K_N}$ containing $\gamma^K_{\mathcal{G}}(1)$, such that $\pi|_{U^K_\gamma}:U^K_\gamma\rightarrow(\mathbb{B}_{\gamma}^r)^K$ is a homeomorphism with respect to topologies $\tau(G_{K_N})$ and $\tau(\mathbb{C}_{K_N})$, and
		\begin{equation}\label{eq-2fpu}
		f\circ\pi|_{U^K_\gamma}^{-1}=F_{\gamma}^K.
		\end{equation}
	\end{enumerate}
	For each $N\in\mathbb{N}^+$ and $\gamma\in\Gamma^N$, we will prove that $\gamma\in\Gamma'$, following the proof of \cite[Theorem \ref{th-dere}]{Dou2018001} with ignoring some of the details which is similar to \cite[Theorem \ref{th-dere}]{Dou2018001}.
	
	We notice that for each $N\in\mathbb{N}^+$, $K\in\mathbb{S}^N$ and $\gamma\in(\Gamma')^N$, there exists a unique domain satisfing \ref{it-2nki}), denoted by $U^K_\gamma$.
	
	Suppose $\Gamma'\neq\Gamma$, then we set
	\begin{equation*}
	t_1:=\inf\{t\in[0,1]:\gamma[t]\notin\Gamma'\ or\ \gamma[t^-]\notin\Gamma'\}.
	\end{equation*}
	
	\textbf{1). We will prove that $t_1\neq 0$, in this step.}
	
	Thanks to (\ref{eq-2fxy}) and (\ref{eq-2fbz}), we have
	\begin{equation*}
	\gamma[0]=\gamma[0^-]=\alpha\in\Gamma.
	\end{equation*}
	
	Since $\gamma$ is continuous, there exists $t_2\in(0,\frac{1}{N})$ such that
	\begin{equation*}
	\gamma([0,t_2])\subset\mathbb{B}_\alpha^r.
	\end{equation*}
	According to $\mathcal{F}$ is locally compatible, we have
	\begin{equation*}
	F_{\gamma[t_3]}=F_{\alpha},\qquad\mbox{on}\qquad\mathbb{B}_{\gamma[t_3]}^r\cap\mathbb{B}_\alpha^r,\qquad\forall\ t_3\in[0,t_2].
	\end{equation*}
	And thanks to (\ref{eq-2fkr}) and (\ref{eq-2fpu}), it follows that
	\begin{equation*}
	\begin{split}
	F_{\gamma[t_3]}^K(x+yK)=&\zeta(K)F_{\gamma[t_3]}(x+yi)
	\\=&\zeta(K)F_{\alpha}(x+yi)
	\\=&F_{\alpha}^K(x+yK)
	\\=&f\circ\pi|_{U_\alpha^K}^{-1}(x+yK)
	\end{split}
	\end{equation*}
	for each $K\in\mathbb{S}$, and $x,y\in\mathbb{R}$ with $x+yi\in\mathbb{B}_{\gamma[t_3]}^r\cap\mathbb{B}_\alpha^r$.
	
	For each $K\in\mathbb{S}$, according to \cite[Proposition \ref{pr-xfh}]{Dou2018001}, there exists a slice-domain $U_3^K$ in $\mathbb{H}$ containing $(\mathbb{B}_{\gamma[t_3]}^r)^K$, and a slice regular function $f_3^K$ on $U_3^K$ with
	\begin{equation*}
	f_3^K|_{(\mathbb{B}_{\gamma[t_3]}^r)^K}=F_{\gamma[t_3]}^K.
	\end{equation*}
	And according to \cite[Theorem \ref{th-exsd} and Proposition \ref{pr-tsh}]{Dou2018001},
	\begin{equation*}
	(U_3^K,id_{U_3^K},q_3)\prec(G,\pi,\pi|_{U_\alpha^K}^{-1}(q_3)),\qquad\forall\ q_3\in(\mathbb{B}_{\gamma[t_3]}^r)^K\cap(\mathbb{B}_\alpha^r)^K.
	\end{equation*}
	It follows that
	\begin{equation*}
	\gamma[t_3]=\gamma[t_3^-]\in\Gamma',\qquad\forall\ t_3\in(0,t_2).
	\end{equation*}
	Then
	\begin{equation*}
	0\neq\inf\{t\in[0,1]:\gamma[t]\notin\Gamma'\ or\ \gamma[t^-]\notin\Gamma'\}=t_1.
	\end{equation*}
	
	\textbf{2). We will prove that $\{Nt_1\}=0$, in this step.}
	
	Suppose that $\{Nt_1\}\neq 0$. There exists
	\begin{equation*}
	t_4\in(\frac{\lfloor Nt_1\rfloor}{N},t_1)\qquad\mbox{and}\qquad t_5\in(t_1,\frac{\lceil Nt_1\rceil}{N})
	\end{equation*}
	such that
	\begin{equation*}
	\gamma([t_4,t_5])\subset\mathbb{B}_{\gamma[t_1]}^r.
	\end{equation*}
	And thanks to (\ref{eq-2fkr}) and (\ref{eq-2fpu}), we have
	\begin{equation}\label{eq-2fkrt}
	\begin{split}
	F_{\gamma[t_1]}^K(x+yK_{N_1})=&\zeta(K)F_{\gamma[t_1]}(x+yi)
	\\=&\zeta(K)F_{\gamma[t_4]}(x+yi)
	\\=&F_{\gamma[t_4]}^K(x+yK_{N_1})
	\\=&f\circ\pi|_{U^K_{\gamma[t_4]}}^{-1}(x+yK_{N_1})
	\end{split}
	\end{equation}
	for each $x,y\in\mathbb{R}$ with $x+yi\in\mathbb{B}_{\gamma[t_1]}^r\cap\mathbb{B}_{\gamma[t_4]}^r$ and $K\in\mathbb{S}^{N_1}$, where
	\begin{equation}\label{eq-2non}
	N_1:=\lceil Nt_1\rceil.
	\end{equation}
	According to \cite[Theorem \ref{th-exsd}, Proposition \ref{pr-tsh} and Proposition \ref{pr-xfh}]{Dou2018001}, we have
	\begin{equation*}
	\gamma[t_1]=\gamma[t_1^-]\in\mathcal{G}.
	\end{equation*}
	And thanks to (\ref{eq-2fkrt}), it follows that
	\begin{equation*}
	\gamma[t_1]=\gamma[t_1^-]\in\Gamma'.
	\end{equation*}
	Similarly, for each $t_6\in(t_1,t_5)$, since $\gamma(t_6)\in\mathbb{B}_{\gamma[t_1]}^r$, then
	\begin{equation}\label{eq-2fkx}
	\begin{split}
	F_{\gamma[t_6]}^K(x+yK_{N_1}))=&F_{\gamma[t_1]}^K(x+yK_{N_1})
	\\=&f\circ\pi|_{U^K_{\gamma[t_1]}}^{-1}(x+yK_{N_1})
	\end{split}
	\end{equation}
	for each $x,y\in\mathbb{R}$ with $x+yi\in\mathbb{B}_{\gamma[t_1]}^r\cap\mathbb{B}_{\gamma[t_6]}^r$ and $K\in\mathbb{S}^{N_1}$. According to \cite[Theorem \ref{th-exsd}, Proposition \ref{pr-tsh} and Proposition \ref{pr-xfh}]{Dou2018001}, we have
	\begin{equation*}
	\gamma[t_6]=\gamma[t_6^-]\in\mathcal{G}.
	\end{equation*}
	And thanks to (\ref{eq-2fkx}), it follows that
	\begin{equation*}
	\gamma[t_6]=\gamma[t_6^-]\in\Gamma',\qquad\forall\ t_6\in(t_1,t_5).
	\end{equation*}
	Then
	\begin{equation*}
	t_1=\inf\{t\in[0,1]:\gamma[t]\notin\Gamma'\ or\ \gamma[t^-]\notin\Gamma'\}\ge t_5>t_1,
	\end{equation*}
	which is a contradiction. It follows that $\{Nt_1\}=0$.
	
	\textbf{3). We will prove that $\Gamma'=\Gamma$, in this step.}
	
	We can prove that $\gamma[t_1^-]\in\Gamma'$, by the same method to 2). If $t_1=1$, then
	\begin{equation*}
	\gamma=\gamma[t_1^-]\in\Gamma'.
	\end{equation*}
	Then
	\begin{equation*}
	t_1=\inf\{t\in[0,1]:\gamma[t]\notin\Gamma'\ or\ \gamma[t^-]\notin\Gamma'\}\neq 1,
	\end{equation*}
	which is a contradiction.
	
	Otherwise, then $t_1\neq 1$. We notice that
	\begin{equation*}
	\begin{split}
	F^K_{\gamma[t_1]}(x)=&\zeta(K)F_{\gamma[t_1]}(x)
	\\=&\zeta(K)(F_{\gamma[t_1^-]}(x)^T,0_{1\times 2^m})^T
	\\=&\zeta(K^{N_1})F_{\gamma[t_1^-]}(x)
	\\=&F_{\gamma[t_1^-]}^{K^{(N_1)}}(x)
	\\=&f\circ\pi|_{U^{K^{(N_1)}}_{\gamma[t_1^-]}}^{-1}(x)
	\end{split}
	\end{equation*}
	for each $x\in\mathbb{B}_{\gamma[t_1]}^r\cap\mathbb{B}_{\gamma[t_1^-]}^r\cap\mathbb{R}$, and $K\in\mathbb{S}^{N_1+1}$, where $N_1$ is defined by (\ref{eq-2non}).
	
	According to \cite[Theorems \ref{th-idh}, \ref{th-exsd} and Propositions \ref{pr-tsh}, \ref{pr-xfh}]{Dou2018001}, we have
	\begin{equation*}
	\gamma[t_1]\in\Gamma'.
	\end{equation*}
	By the same method to 2), we can get that there exists $t_7\in(t_1,t_1+\frac{1}{N})$ such that
	\begin{equation*}
	\gamma[t_8]=\gamma[t_8^-]\in\Gamma',\qquad\forall\ t_8\in(t_1,t_7).
	\end{equation*}
	Then
	\begin{equation*}
	t_1=\inf\{t\in[0,1]:\gamma[t]\notin\Gamma'\ or\ \gamma[t^-]\notin\Gamma'\}\ge t_7>t_1,
	\end{equation*}
	which is a contradiction.
	
	In summary,
	\begin{equation*}
	\gamma\in\Gamma',\qquad\forall\ \gamma\in\Gamma.
	\end{equation*}
	It follows that $\Gamma'=\Gamma$ and Theorem \ref{th-sfsr} holds.
\end{proof}

\begin{defn}\label{df-srs}
	Let $\mathcal{G}=(G,\pi,x)$ be a $\infty$-axially symmetric slice-domain over $\mathbb{H}$ with distinguished point.
	
	A map $r:\mathcal{P}^{\infty}_{\mathbb{C}}(\mathcal{G})\rightarrow\mathbb{H}$ is called a stem radius system of $\mathcal{G}$, if for each $N\in\mathbb{N}^+$, $\gamma\in\mathcal{P}^N_{\mathbb{C}}(\mathcal{G})$ and $K\in\mathbb{S}^N$, there exists a domain $U^K_{\gamma}$ in $G_{K_N}$ containing $\gamma^K_{\mathcal{G}}(1)$ such that $\pi|_{U^K_{\gamma}}:U^K_{\gamma}\rightarrow(\mathbb{B}_{\gamma}^r)^K$ is a homeomorphism with respect to topologies $\tau(G_{K_N})$ and $\tau(\mathbb{C}_{K_N})$.
	
	We denote $U^K_{\gamma}$ by $(\mathcal{G}_\gamma^r)^K$.
\end{defn}

For each $z,w\in\mathbb{C}$, we define a path $\alpha[z,w]$ in $\mathbb{C}$ by
\begin{equation*}
\alpha[z,w](t):=z+tw,\qquad\forall\ t\in[0,1].
\end{equation*}

For each $z\in\mathbb{C}$ and $\gamma\in\mathcal{P}^N(\mathbb{C})$, we define an $N$-part path $\gamma\langle z\rangle$ in $\mathbb{C}$, by
\begin{equation*}
\gamma\langle z\rangle:=(\gamma_1,\gamma_2,...,\gamma_{N-1},\gamma_N\cdot\alpha[\gamma_N(1),z]).
\end{equation*}

Let $\mathcal{G}=(G,\pi,x)$ be a $\infty$-axially symmetric slice-domain over $\mathbb{H}$ with distinguished point, and $r$ be a stem radius system of $\mathcal{G}$. According to for each $N\in\mathbb{N}^+$, $\gamma\in\mathcal{P}^N_{\mathbb{C}}(\mathcal{G})$ and $K\in\mathbb{S}^N$,
\begin{equation*}
\pi|_{(\mathcal{G}_\gamma^r)^K}:(\mathcal{G}_\gamma^r)^K\rightarrow(\mathbb{B}_{\gamma}^r)^K
\end{equation*}
is a homeomorphism with respect to topologies $\tau(G_{K_N})$ and $\tau(\mathbb{C}_{K_N})$, we have
\begin{equation*}
\gamma\langle z\rangle\prec\mathcal{G},\qquad\forall\ z\in\mathbb{B}_{\gamma}^r.
\end{equation*}

\begin{defn}
	Let $\mathcal{G}=(G,\pi,x)$ be a $\infty$-axially symmetric slice-domain over $\mathbb{H}$ with distinguished point, $f$ be a slice regular function on $G$, and $r$ be a stem radius system of $\mathcal{G}$. We define a map
	\begin{equation*}
	F:\mathcal{P}^{\infty}_{\mathbb{C}}(\mathcal{G})\rightarrow\mathcal{SH}^{\infty}(\mathbb{B}^r_{\mathcal{P}^\infty_{\mathbb{C}}(\mathcal{G})}),\qquad \gamma\mapsto F_{\gamma},\qquad\forall\ \gamma\in\mathcal{P}^{\infty}_{\mathbb{C}}(\mathcal{G}),
	\end{equation*}
	where $F_{\gamma}:\mathbb{B}_{\gamma}^r\rightarrow M_{2^N\times 1}(\mathbb{H})$ defined by
	\begin{equation}\label{eq-2fgz}
	F_{\gamma}(z):=\mathcal{G}^f_{\gamma\langle z\rangle},\qquad\forall\ z\in\mathbb{B}_\gamma^r,
	\end{equation}
	and $\mathcal{G}^f_{\gamma\langle z\rangle}$ is defined by (\ref{eq-gfg}).
	
	We call $F$ is the stem function system of $f$ with respect to $r$, denoted by $f^r$.
\end{defn}

\begin{thm}\label{th-srhs}
	Let $\mathcal{G}=(G,\pi,x_0)$ be a $\infty$-axially symmetric slice-domain over $\mathbb{H}$ with distinguished point, $f$ be a slice regular function on $G$, and $r$ be a stem radius system of $\mathcal{G}$. Then
	\begin{equation*}
	\mathcal{F}:=(f^r,r,\mathcal{P}^{\infty}_{\mathbb{C}}(\mathcal{G}))
	\end{equation*}
	is a holomorphic stem system.
	
	We call $\mathcal{F}$ the holomorphic stem system of $f$ with respect to $r$, denoted by $f^r_{\mathcal{G}}$.
\end{thm}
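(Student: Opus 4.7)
The plan is to verify, in order, the four compatibility axioms of Definition \ref{df-hssy} for $\mathcal{F}=(f^r,r,\Gamma)$ with $\Gamma:=\mathcal{P}^{\infty}_\mathbb{C}(\mathcal{G})$. The engine throughout is the defining identity behind the Representation Formula (Theorem \ref{tm-rf}): by (\ref{eq-gfg}), for every $K\in\mathbb{S}^N$, $z\in\mathbb{B}_\gamma^r$, and $\gamma\in\Gamma^N$,
\begin{equation*}
\zeta(K)F_\gamma(z)=f(\gamma\langle z\rangle^K_\mathcal{G}(1)).
\end{equation*}
Before the axioms, one notes that $\Gamma$ is automatically radial (any initial segment of a slice-lifted path is still slice-lifted), and every path in $\Gamma$ starts at the common real point $\pi(x_0)$ (real by $\infty$-axial symmetry), so the initial path $\alpha$ of $\Gamma$ is well defined.

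For local holomorphy, fix $\gamma\in\Gamma^N$ and $z=x+yi\in\mathbb{B}_\gamma^r$. On the homeomorphic lift $(\mathcal{G}_\gamma^r)^K$ the map $f\circ\pi|_{(\mathcal{G}_\gamma^r)^K}^{-1}$ is holomorphic with imaginary unit $K_N$, so differentiating the defining identity in $(x,y)$ gives $(\partial_x+K_N\partial_y)[\zeta(K)F_\gamma]=0$. The structural relation $K_N\zeta(K)=\zeta(K)\sigma_N$ that underlies Proposition \ref{pr-sfsl} converts this into $\zeta(K)(\partial_x+\sigma_N\partial_y)F_\gamma=0$. Letting $K$ range through the rows of the full-slice-rank matrix $J=\eta_N(I)$ and stacking yields $\mathcal{M}(J)(\partial_x+\sigma_N\partial_y)F_\gamma=0$, and inverting $\mathcal{M}(J)$ forces $F_\gamma\in\mathcal{SH}^N(\mathbb{B}_\gamma^r)$. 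For initial compatibility, one specializes to $N=1$: at real $x$ the paths $\alpha\langle x\rangle^I$ and $\alpha\langle x\rangle^{-I}$ both run entirely in the real axis, so their lifts to $\mathcal{G}$ coincide, and writing $f_1(x):=f(\alpha\langle x\rangle^I_\mathcal{G}(1))$ Proposition \ref{pr-umm} gives
\begin{equation*}
F_\alpha(x)=\mathcal{M}(\eta_1(I))^{-1}(f_1(x),f_1(x))^T=(f_1(x),0)^T.
\end{equation*}

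Local compatibility uses the same inversion principle: for $z\in\mathbb{B}_{\gamma[t_1]}^r\cap\mathbb{B}_{\gamma[t_2^-]}^r$ the two paths $\gamma[t_1]\langle z\rangle^K$ and $\gamma[t_2^-]\langle z\rangle^K$ can be joined through $\gamma(t)$ inside slice balls whose lifts to $\mathcal{G}$ are homeomorphic, producing a homotopy of lifts rel endpoints that forces $\gamma[t_1]\langle z\rangle^K_\mathcal{G}(1)=\gamma[t_2^-]\langle z\rangle^K_\mathcal{G}(1)$; both $\gamma[t_1]$ and $\gamma[t_2^-]$ have $m$ parts, so inverting $\mathcal{M}(\eta_m(I))$ against the common column gives $F_{\gamma[t_1]}(z)=F_{\gamma[t_2^-]}(z)$. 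The main obstacle is axial compatibility: at real $x$ the $(m+1)$-th piece of $\gamma[m/N]\langle x\rangle$ joins two real points, so $\gamma[m/N]\langle x\rangle^{\eta_{m+1}^\imath(I)}_\mathcal{G}(1)$ is independent of the last component $(-1)^{\imath_1}I$, and a direct binary-arithmetic check shows that the first $m$ entries of $\eta_{m+1}^{2k-1}(I)$ and $\eta_{m+1}^{2k}(I)$ both equal $\eta_m^k(I)$. Consequently $f(\gamma[m/N]\langle x\rangle^{\eta_{m+1}(I)}_\mathcal{G}(1))$ is the duplicated column $(v_1,v_1,v_2,v_2,\ldots,v_{2^m},v_{2^m})^T$ with $v_k:=f(\gamma[m/N^-]\langle x\rangle^{\eta_m^k(I)}_\mathcal{G}(1))$. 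Multiplying by $\mathcal{M}(\eta_{m+1}(I))^{-1}=\frac{1}{2^{m+1}}\overline{\mathcal{M}(\eta_{m+1}(I))^T}$ from Proposition \ref{pr-umm} and exploiting the block orthogonality relations from the inductive proof of that proposition (which make the lower half of the inverse annihilate duplicated columns) collapses the product exactly to $(F_{\gamma[m/N^-]}(x)^T,0_{1\times 2^m})^T$. This combinatorial collapse, an induction parallel to that of Proposition \ref{pr-umm}, is the only step not reducible to a direct application of the Representation Formula together with invertibility of $\mathcal{M}$.
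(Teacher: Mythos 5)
Your proposal is correct and follows essentially the same route as the paper: verify the four axioms of Definition \ref{df-hssy} one by one, using the identity $\zeta(K)F_\gamma(z)=f(\gamma\langle z\rangle^K_{\mathcal{G}}(1))$ together with the invertibility of $\mathcal{M}(\eta_N(I))$ for local holomorphy and local/initial compatibility, and the unique-lifting argument through $\gamma(t)$ for local compatibility. Your treatment of axial compatibility via the explicit pairing of rows $2k-1,2k$ of $\eta_{m+1}(I)$ and the unitary inverse of Proposition \ref{pr-umm} is just a more computational phrasing of the paper's factorization $f(\gamma[\tfrac{m}{N}]\langle x\rangle^{J}_{\mathcal{G}}(1))=\mathcal{M}(J)(\mathbb{I}_{2^m},0_{2^m})^T\mathcal{M}(K)^{-1}f(\gamma[\tfrac{m}{N}^-]\langle x\rangle^{K}_{\mathcal{G}}(1))$, so the two arguments coincide.
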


\begin{proof}
	For each $N\in\mathbb{N}^+$, $K\in\mathbb{S}^N$ and $\gamma\in\mathcal{P}^N_{\mathbb{C}}(\mathcal{G})$, we set
	\begin{equation*}
	U^K_{\gamma}:=(\mathcal{G}^r_{\gamma})^K.
	\end{equation*}
	
	\textbf{1). We will prove that $\mathcal{F}$ is locally holomorphic, in this step.}
	
	Let $N\in\mathbb{N}^+$, $\gamma\in\mathcal{P}^N_{\mathbb{C}}(\mathcal{G})$, $I\in\mathbb{S}$ and
	\begin{equation*}
	J:=\eta_N(I).
	\end{equation*}
	According to (\ref{eq-2fgz}), \cite[Proposition \ref{pr-ct}]{Dou2018001} and Theorem \ref{tm-rf}, we have
	\begin{equation*}
	\begin{split}
	(\frac{\partial}{\partial x}+\sigma_N\frac{\partial}{\partial y})f_{\gamma}^r(x+yi)
	=&(\frac{\partial}{\partial x}+\sigma_N\frac{\partial}{\partial y})\mathcal{G}^f_{\gamma\langle x+yi\rangle}
	\\=&(\frac{\partial}{\partial x}+\sigma_N\frac{\partial}{\partial y})\mathcal{M}(J)^{-1}f(\gamma\langle x+yi\rangle^J_{\mathcal{G}}(1))
	\\=&(\frac{\partial}{\partial x}+\sigma_N\frac{\partial}{\partial y})\mathcal{M}(J)^{-1}F_1(x+yJ)
	\\=&\mathcal{M}(J)^{-1}(\frac{\partial}{\partial x}+D_N(J)\frac{\partial}{\partial y})F_1(x+yJ)
	\\=&\mathcal{M}(J)^{-1}0_{2^N\times 1}
	\\=&0_{2^N\times 1}
	\end{split}
	\end{equation*}
	for each $x,y\in\mathbb{R}$ with $x+yi\in\mathbb{B}_{\gamma}^r$, where
	\begin{equation*}
	f(\gamma\langle x+yi\rangle^J_{\mathcal{G}}(1)):=\left(
	\begin{matrix}
	&f(\gamma\langle x+yi\rangle^{J_1}_{\mathcal{G}}(1))
	\\&f(\gamma\langle x+yi\rangle^{J_2}_{\mathcal{G}}(1))
	\\&\vdots
	\\&f(\gamma\langle x+yi\rangle^{J_{2^N}}_{\mathcal{G}}(1))
	\end{matrix}\right),
	\end{equation*}
	and
	\begin{equation*}
	F_1(x+yJ):=\left(
	\begin{matrix}
	&f\circ\pi|_{U^{J_1}_\gamma}^{-1}(x+yJ_{1,N})
	\\&f\circ\pi|_{U^{J_2}_\gamma}^{-1}(x+yJ_{2,N})
	\\&\vdots
	\\&f\circ\pi|_{U^{J_{2^N}}_\gamma}^{-1}(x+yJ_{2^N,N})
	\end{matrix}\right).
	\end{equation*}
	Thence
	\begin{equation*}
	f^r_\gamma\in\mathcal{SH}^N(\mathbb{B}^r_{\gamma}),\qquad\forall N\in\mathbb{N}^+\ \mbox{and}\ \gamma\in\mathcal{P}^N_{\mathbb{C}}(\mathcal{G}).
	\end{equation*}
	It follows that $\mathcal{F}$ is locally holomorphic (see Definition \ref{df-hssy} (1)).
	
	\textbf{2). We will prove that $\mathcal{F}$ is local compatible, in this step.}
	
	For each $N\in\mathbb{N}^+$, $m\in\{1,2,...,N\}$, $\gamma\in\mathcal{P}^N_{\mathbb{C}}(\mathcal{G})$, and $t_1,t_2\in[\frac{m-1}{N},\frac{m}{N}]$ with $t_1<t_2$. If there exists $t\in[t_1,t_2]$ with
	\begin{equation*}
	\gamma[t_1,t]\subset\mathbb{B}_{\gamma[t_1]}^r\qquad\mbox{and}\qquad\gamma[t,t_2]\subset\mathbb{B}_{\gamma[t_2]}^r,
	\end{equation*}
	then for each $z\in\mathbb{B}_{\gamma[t_1]}^r\cap\mathbb{B}_{\gamma[t_2]}^r$, there exists a path $\alpha$ in $\mathbb{B}_{\gamma[t_1]}^r\cap\mathbb{B}_{\gamma[t_2]}^r$ from $\gamma(t)$ to $z$.
	
	We define a $m$-part path $\beta$ in $\mathbb{C}$ from $\pi(x_0)$ to $z$, by
	\begin{equation*}
	\beta:=(\gamma_1,\gamma_2,...,\gamma_{m-1},\gamma_{(Nt)}\circ\alpha).
	\end{equation*}
	We notice that
	\begin{equation*}
	\beta^K_{\mathcal{G}}(1)=\pi|_{U^K_{\gamma[t_1]}}^{-1}\circ P_{K_m}(z)=\pi|_{U^K_{\gamma[t_2]}}^{-1}\circ P_{K_m}(z),\qquad\forall\ K\in\mathbb{S}^m.
	\end{equation*}
	And according to (\ref{eq-2fgz}), we have
	\begin{equation*}
	\begin{split}
	f^r_{\gamma[t_1]}(z)=&\mathcal{M}(J)F_2(x_1+y_1J)
	\\=&\mathcal{M}(J)f(\beta^J_{\mathcal{G}}(1))
	\\=&\mathcal{M}(J)F_3(x_1+y_1J)
	\\=&f^r_{\gamma[t_2]}(z),
	\end{split}
	\end{equation*}
	where $x_1,y_1\in\mathbb{R}$ with $z=x_1+y_1i$, $I\in\mathbb{S}$, $J=\eta_m(I)$,
	\begin{equation*}
	F_2(x_1+y_1J):=\left(
	\begin{matrix}
	&f\circ\pi|_{U^{J_1}_{\gamma[t_1]}}^{-1}(x_1+y_1J_{1,m})
	\\&f\circ\pi|_{U^{J_2}_{\gamma[t_1]}}^{-1}(x_1+y_1J_{2,m})
	\\&\vdots
	\\&f\circ\pi|_{U^{J_{2^N}}_{\gamma[t_1]}}^{-1}(x_1+y_1J_{2^N,m})
	\end{matrix}\right),
	\end{equation*}
	\begin{equation*}
	f(\beta^J_{\mathcal{G}}(1))=\left(
	\begin{matrix}
	&f(\beta^{J_1}_{\mathcal{G}}(1))
	\\&f(\beta^{J_2}_{\mathcal{G}}(1))
	\\&\vdots
	\\&f(\beta^{J_{2^N}}_{\mathcal{G}}(1))
	\end{matrix}\right),
	\end{equation*}
	and
	\begin{equation*}
	F_3(x_1+y_1J):=\left(
	\begin{matrix}
	&f\circ\pi|_{U^{J_1}_{\gamma[t_2]}}^{-1}(x_1+y_1J_{1,m})
	\\&f\circ\pi|_{U^{J_2}_{\gamma[t_2]}}^{-1}(x_1+y_1J_{2,m})
	\\&\vdots
	\\&f\circ\pi|_{U^{J_{2^N}}_{\gamma[t_2]}}^{-1}(x_1+y_1J_{2^N,m})
	\end{matrix}\right).
	\end{equation*}
	It follows that $f^r_{\gamma[t_1]}$ and $f^r_{\gamma[t_2]}$ are coincide on $\mathbb{B}_{\gamma[t_1]}^r\cap\mathbb{B}_{\gamma[t_2]}^r$. Then $\mathcal{F}$ is local compatible (see Definition \ref{df-hssy} (2)).
	
	\textbf{3). We will prove that $\mathcal{F}$ is axially compatible, in this step.}
	
	For each $N\in\mathbb{N}^+$, $m\in\{1,2,...,N\}$, $\gamma\in\mathcal{P}^N_{\mathbb{C}}(\mathcal{G})$, and
	\begin{equation*}
	x\in\mathbb{B}_{\gamma[\frac{m}{N}]}^r\cap\mathbb{B}_{\gamma[\frac{m}{N}^-]}^r\cap\mathbb{R}.
	\end{equation*}
	Let $I\in\mathbb{S}$, and we set
	\begin{equation*}
	J:=\eta_{m+1}(I)\qquad\mbox{and}\qquad K:=\eta_m(I).
	\end{equation*}
	
	For each $\imath\in\{1,2,...,2^{m+1}\}$, we notice that
	\begin{equation*}
	\gamma[\frac{m}{N}]\langle x\rangle^{J_\imath}_{\mathcal{G}}(1)=\gamma[\frac{m}{N}^-]\langle x\rangle^{J_\imath'}_{\mathcal{G}}(1)\qquad\mbox{and}\qquad\zeta(J_\imath)=(\zeta(J_\imath'),0_{1\times 2^m}).
	\end{equation*}
	where
	\begin{equation*}
	J_\imath'=(J_{\imath,1},J_{\imath,1},...,J_{\imath,m}).
	\end{equation*}
	And according to Theorem \ref{tm-rf}, we have
	\begin{equation*}
	\begin{split}
	&f(\gamma[\frac{m}{N}]\langle x\rangle^J_{\mathcal{G}}(1))
	\\:=&(f(\gamma[\frac{m}{N}]\langle x\rangle^{J_1}_{\mathcal{G}}(1)),f(\gamma[\frac{m}{N}]\langle x\rangle^{J_2}_{\mathcal{G}}(1)),...,f(\gamma[\frac{m}{N}]\langle x\rangle^{J_{2^{m+1}}}_{\mathcal{G}}(1)))^T
	\\=&(f(\gamma[\frac{m}{N}^-]\langle x\rangle^{J_1'}_{\mathcal{G}}(1)),f(\gamma[\frac{m}{N}^-]\langle x\rangle^{J_2'}_{\mathcal{G}}(1)),...,f(\gamma[\frac{m}{N}^-]\langle x\rangle^{J_{2^{m+1}}'}_{\mathcal{G}}(1)))^T
	\\=&(\zeta(J_1')^T,\zeta(J_2')^T,...,\zeta(J_{2^{m+1}}')^T)^T\mathcal{M}(K)f(\gamma[\frac{m}{N}^-]\langle x\rangle^K_{\mathcal{G}}(1))
	\\=&\mathcal{M}(J)(\mathbb{I}_{2^m},0_{2^m})^T\mathcal{M}(K)f(\gamma[\frac{m}{N}^-]\langle x\rangle^K_{\mathcal{G}}(1)),
	\end{split}
	\end{equation*}
	where
	\begin{equation*}
	f(\gamma[\frac{m}{N}^-]\langle x\rangle^K_{\mathcal{G}}(1)):=\left(
	\begin{matrix}
	&f(\gamma[\frac{m}{N}^-]\langle x\rangle^{K_1}_{\mathcal{G}}(1))
	\\&f(\gamma[\frac{m}{N}^-]\langle x\rangle^{K_2}_{\mathcal{G}}(1))
	\\&\vdots
	\\&f(\gamma[\frac{m}{N}^-]\langle x\rangle^{K_{2^m}}_{\mathcal{G}}(1))
	\end{matrix}\right).
	\end{equation*}
	And thanks to (\ref{eq-mrk}), (\ref{eq-gfg}) and (\ref{eq-2fgz}), it is clear that
	\begin{equation*}
	\begin{split}
	f^r_{\gamma[\frac{m}{N}]}(x)=&\mathcal{G}^f_{\gamma[\frac{m}{N}]\langle x\rangle}
	\\=&\mathcal{M}(J)^{-1}f(\gamma[\frac{m}{N}]\langle x\rangle^J_{\mathcal{G}}(1))
	\\=&\mathcal{M}(J)^{-1}\mathcal{M}(J)(\mathbb{I}_{2^m},0_{2^m})^T\mathcal{M}(K)f(\gamma[\frac{m}{N}^-]\langle x\rangle^K_{\mathcal{G}}(1))
	\\=&((\mathcal{M}(K)f(\gamma[\frac{m}{N}]\langle x\rangle^K_{\mathcal{G}}(1)))^T,0_{1\times 2^m})^T
	\\=&(f^r_{\gamma[\frac{m}{N}^-]}(x)^T,0_{1\times 2^m})^T
	\end{split}
	\end{equation*}
	for each $x\in\mathbb{B}_{\gamma[\frac{m}{N}]}^r\cap\mathbb{B}_{\gamma[\frac{m}{N}^-]}^r\cap\mathbb{R}$.
	
	Then $\mathcal{F}$ is axially compatible (see Definition \ref{df-hssy} (3)).
	
	\textbf{4). We will prove that $\mathcal{F}$ is initially compatible, in this step.}
	
	For each $x\in\mathbb{B}_{\gamma[0]}^r\cap\mathbb{R}$, and $I\in\mathbb{S}$, according to (\ref{eq-mrk}), (\ref{eq-gfg}) and (\ref{eq-2fgz}), we have
	\begin{equation*}
	\begin{split}
	f^r_{\gamma[0]}(x)=&\mathcal{G}^f_{\gamma[0]\langle x\rangle}
	\\=&
	\left(\begin{matrix}
	1&I\\1&-I
	\end{matrix}\right)^{-1}
	\left(\begin{matrix}
	f(\gamma[0]\langle x\rangle^I_{\mathcal{G}}(1))\\f(\gamma[0]\langle x\rangle^{-I}_{\mathcal{G}}(1))
	\end{matrix}\right)
	\\=&\frac{1}{2}
	\left(\begin{matrix}
	1&1\\-I&I
	\end{matrix}\right)
	\left(\begin{matrix}
	f(\gamma[0]\langle x\rangle^I_{\mathcal{G}}(1))\\f(\gamma[0]\langle x\rangle^{I}_{\mathcal{G}}(1))
	\end{matrix}\right)
	\\=&(f(\gamma[0]\langle x\rangle^I_{\mathcal{G}}(1)),0)^T.
	\end{split}
	\end{equation*}
	Then $\mathcal{F}$ is initially compatible.
	
	In summary, $\mathcal{F}$ is a holomorphic stem function system.
\end{proof}

\begin{defn}\label{df-2sds}
	Let $\mathcal{G}=(G,\pi,x)$ be a slice-domain over $\mathbb{H}$ with distinguished point, and $r$ be a stem radius system of $\mathcal{G}$. We set
	\begin{equation*}
	\mathcal{HS}^r(\mathcal{G}):=\{(F,r,\mathcal{P}^\infty_{\mathbb{C}}(\mathcal{G}))\in\mathcal{HS}(\mathcal{G})|F:\mathcal{P}^\infty_{\mathbb{C}}(\mathcal{G})\rightarrow\mathcal{SH}^\infty(\mathbb{B}^r_{\mathcal{P}^\infty_{\mathbb{C}}(\mathcal{G})})\ is\ a\ map\},
	\end{equation*}
	and
	\begin{equation*}
	\mathcal{I}_{\mathcal{G}}^r:=\mathcal{I}_{\mathcal{G}}|_{\mathcal{HS}^r(\mathcal{G})}:\mathcal{HS}^r(\mathcal{G})\rightarrow \mathcal{SR}(G),
	\end{equation*}
	where $\mathcal{HS}(\mathcal{G})$ is defined in Definition \ref{df-hss}.
\end{defn}

Let $\mathcal{G}=(G,\pi,x_0)$ be a slice-domain over $\mathbb{H}$ with distinguished point, $r$ be a stem radius system of $\mathcal{G}$, and
\begin{equation*}
\mathcal{F}_\imath=(F_\imath,r,\mathcal{P}^\infty_{\mathbb{C}}(\mathcal{G}))\in\mathcal{HS}(\mathcal{G}),\qquad\imath=1,2.
\end{equation*}
We define $F_1+F_2:\mathcal{P}^\infty_{\mathbb{C}}(\mathcal{G})\rightarrow\mathcal{SH}^{\infty}(\mathbb{B}_{\mathcal{P}^\infty_{\mathbb{C}}(\mathcal{G})}^r)$ by
\begin{equation*}
(F_1+F_2)(\gamma):=F_1(\gamma)+F_2(\gamma).
\end{equation*}
We can prove that $(F_1+F_2,r,\mathcal{P}^\infty_{\mathbb{C}}(\mathcal{G}))$ is a holomorphic stem system, by direct verification.

We call $(F_1+F_2,r,\mathcal{P}^\infty_{\mathbb{C}}(\mathcal{G}))$ the sum of $\mathcal{F}_1$ and $\mathcal{F}_2$, denoted by $\mathcal{F}_1+\mathcal{F}_2$.

\begin{prop}\label{pr-is}
	Let $\mathcal{G}=(G,\pi,x_0)$ be a slice-domain over $\mathbb{H}$ with distinguished point, and $r$ be a stem radius system of $\mathcal{G}$. Then $(\mathcal{HS}^r(\mathcal{G}),+)$ is an Abelian group.
	
	Moreover, if $\mathcal{G}$ is $\infty$-axially symmetric, then $\mathcal{I}_{\mathcal{G}}^r$ is a group isomorphic between $(\mathcal{HS}^r(\mathcal{G}),+)$ and $(\mathcal{SR}(G),+)$.
\end{prop}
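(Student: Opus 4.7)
The plan is to verify the four abelian group axioms for $(\mathcal{HS}^r(\mathcal{G}), +)$ and then, under the $\infty$-axial symmetry hypothesis, establish bijectivity of $\mathcal{I}_{\mathcal{G}}^r$ as an additive map between stem systems and slice regular functions.

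First, the zero map $0 \colon \gamma \mapsto 0_{2^N \times 1}$ trivially satisfies each of the four conditions in Definition \ref{df-hssy}, and the slice regular function it induces on $G$ is the zero function; hence $(0, r, \mathcal{P}^\infty_{\mathbb{C}}(\mathcal{G})) \in \mathcal{HS}^r(\mathcal{G})$ and serves as the identity. For each $\mathcal{F} = (F, r, \mathcal{P}^\infty_{\mathbb{C}}(\mathcal{G})) \in \mathcal{HS}^r(\mathcal{G})$, the negation $-\mathcal{F} := (-F, r, \mathcal{P}^\infty_{\mathbb{C}}(\mathcal{G}))$ satisfies the compatibility axioms by linearity, and the induced slice regular function is $-\mathcal{I}_{\mathcal{G}}[\mathcal{F}]$; this gives inverses. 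Associativity and commutativity of $+$ are inherited pointwise from $M_{2^N \times 1}(\mathbb{H})$, and closure is exactly the direct verification noted immediately before the statement. This establishes the first claim.

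For the second claim, I first argue that $\mathcal{I}_{\mathcal{G}}^r$ is a homomorphism: given $\mathcal{F}_1, \mathcal{F}_2 \in \mathcal{HS}^r(\mathcal{G})$, the slice regular function $\mathcal{I}_{\mathcal{G}}[\mathcal{F}_1] + \mathcal{I}_{\mathcal{G}}[\mathcal{F}_2]$ satisfies (\ref{eq-2fpi}) for $\mathcal{F}_1 + \mathcal{F}_2$ by linearity of pre-composition with $\pi|_{U^K_\gamma}^{-1}$, and by the Identity Principle \cite[Theorem \ref{th-idh}]{Dou2018001} it must agree with $\mathcal{I}_{\mathcal{G}}[\mathcal{F}_1 + \mathcal{F}_2]$. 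Injectivity then reduces to showing that $\mathcal{I}_{\mathcal{G}}^r[\mathcal{F}] \equiv 0$ forces $F \equiv 0$. Fix $N \in \mathbb{N}^+$, $\gamma \in \mathcal{P}^N_{\mathbb{C}}(\mathcal{G})$ and any $I \in \mathbb{S}$; applying (\ref{eq-2fpi}) and (\ref{eq-2fkr}) to $K = \eta^m_N(I)$ for $m = 1, 2, \dots, 2^N$ yields the system $\mathcal{M}(\eta_N(I)) F_\gamma(x+yi) = 0$ on $\mathbb{B}_\gamma^r$, and the invertibility furnished by Proposition \ref{pr-umm} forces $F_\gamma \equiv 0$.

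Surjectivity is where the $\infty$-axial symmetry is essential, since it guarantees that for every $\gamma \in \mathcal{P}^N(\mathbb{C})$ with $\gamma^I \prec \mathcal{G}$ for some $I$ we actually have $\gamma \in \mathcal{P}^N_{\mathbb{C}}(\mathcal{G})$ by Proposition \ref{pr-spa}. Given $f \in \mathcal{SR}(G)$, Theorem \ref{th-srhs} furnishes the holomorphic stem system $f^r_{\mathcal{G}} = (f^r, r, \mathcal{P}^\infty_{\mathbb{C}}(\mathcal{G}))$. To conclude $f^r_{\mathcal{G}} \in \mathcal{HS}^r(\mathcal{G})$ with $\mathcal{I}^r_{\mathcal{G}}[f^r_{\mathcal{G}}] = f$, I verify directly that for every $\gamma \in \mathcal{P}^N_{\mathbb{C}}(\mathcal{G})$ and $K \in \mathbb{S}^N$,
\begin{equation*}
(f^r_\gamma)^K(x + yK_N) = \zeta(K)\, \mathcal{G}^f_{\gamma\langle x+yi\rangle} = f\bigl(\gamma\langle x+yi\rangle^K_{\mathcal{G}}(1)\bigr) = f \circ \pi|_{(\mathcal{G}^r_\gamma)^K}^{-1}(x + yK_N),
\end{equation*}
where the first equality is (\ref{eq-2fkr}) combined with (\ref{eq-2fgz}), the second is Representation Formula (\ref{eq-mrk}) together with (\ref{eq-gfg}), and the third identifies the lift with the chart inverse from Definition \ref{df-srs}.

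The main obstacle I anticipate lies in the last identification for surjectivity, namely matching the distinguished domain $(\mathcal{G}^r_\gamma)^K$ attached to the stem radius system against the endpoint of the lift $\gamma\langle z\rangle^K_{\mathcal{G}}$. This equality should follow from uniqueness of path liftings (\cite[Proposition \ref{pr-ul}]{Dou2018001}) after noting that $z \mapsto \pi|_{(\mathcal{G}^r_\gamma)^K}^{-1}(P_{K_N}(z))$ and $z \mapsto \gamma\langle z\rangle^K_{\mathcal{G}}(1)$ both lift the slice preserving path $t \mapsto P_{K_N}(\gamma(1)+t(z-\gamma(1)))$ starting at the common point $\gamma^K_{\mathcal{G}}(1)$, so they coincide throughout the connected ball $(\mathbb{B}_\gamma^r)^K$.
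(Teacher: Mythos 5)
Your proposal is correct and follows essentially the same route as the paper: closure and the homomorphism property via the $\zeta(K)$-linearity of the charts together with the Identity Principle, injectivity via the invertibility of $\mathcal{M}(\eta_N(I))$, and surjectivity via Theorem \ref{th-srhs}. You merely supply some details the paper leaves implicit (the identity and inverse elements, and the chart-versus-lift identification $\pi|_{(\mathcal{G}^r_\gamma)^K}^{-1}(P_{K_N}(z))=\gamma\langle z\rangle^K_{\mathcal{G}}(1)$ needed to confirm $\mathcal{I}^r_{\mathcal{G}}[f^r_{\mathcal{G}}]=f$), which you resolve correctly by uniqueness of path lifting.
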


\begin{proof}
	1. Let $\mathcal{F}_\imath=(F_\imath,r,\mathcal{P}^\infty_{\mathbb{C}}(\mathcal{G}))\in\mathcal{HS}^r(G)$, $\imath=1,2$. Thanks to (\ref{eq-gfg}), (\ref{eq-2fkr}) and (\ref{eq-2fgz}), we have
	\begin{equation*}
	\begin{split}
	&(\mathcal{I}_{\mathcal{G}}^r(\mathcal{F}_1)+\mathcal{I}_{\mathcal{G}}^r(\mathcal{F}_2))\circ\pi|_{(\mathcal{G}^r_\gamma)^K}^{-1}(x+yK_N)
	\\=&\mathcal{I}_{\mathcal{G}}^r(\mathcal{F}_1)\circ\pi|_{(\mathcal{G}^r_\gamma)^K}^{-1}(x+yK_N)+\mathcal{I}_{\mathcal{G}}^r(\mathcal{F}_2)\circ\pi|_{(\mathcal{G}^r_\gamma)^K}^{-1}(x+yK_N)
	\\=&\zeta(K)(F_1)_\gamma(x+yi)+\zeta(K)(F_2)_\gamma(x+yi)
	\\=&\zeta(K)((F_1)_\gamma+(F_2)_\gamma)(x+yi)
	\\=&\zeta(K)(F_1+F_2)_\gamma(x+yi)
	\\=&(F_1+F_2)^K_\gamma(x+yK_N)
	\end{split}
	\end{equation*}
	for each $N\in\mathbb{N}^+$, $K\in\mathbb{S}^N$, $\gamma\in\mathcal{P}^N_{\mathbb{C}}(\mathcal{G})$, and $x,y\in\mathbb{R}$ with $x+yi=\gamma(1)\in\mathbb{B}_\gamma^r$. Then
	\begin{equation*}
	(\mathcal{I}_{\mathcal{G}}^r(\mathcal{F}_1)+\mathcal{I}_{\mathcal{G}}^r(\mathcal{F}_2))\circ\pi|_{(\mathcal{G}^r_\gamma)^K}^{-1}=(F_1+F_2)^K_\gamma.
	\end{equation*}
	Notice that $\mathcal{I}_{\mathcal{G}}^r(\mathcal{F}_1)+\mathcal{I}_{\mathcal{G}}^r(\mathcal{F}_2)$ is a slice regular function on $G$, and according to Definitions \ref{df-hss} and \ref{df-2sds}, we have
	\begin{equation*}
	\mathcal{F}_1+\mathcal{F}_2=(F_1+F_2,r,\mathcal{P}^\infty_{\mathbb{C}}(\mathcal{G}))\in\mathcal{HS}^r(G)
	\end{equation*}
	and
	\begin{equation*}
	\mathcal{I}_{\mathcal{G}}^r(\mathcal{F}_1)+\mathcal{I}_{\mathcal{G}}^r(\mathcal{F}_2)=\mathcal{I}_{\mathcal{G}}^r(\mathcal{F}_1+\mathcal{F}_2).
	\end{equation*}
	
	According to $F_1+F_2=F_2+F_1$, it follows that $\mathcal{F}_1+\mathcal{F}_2=\mathcal{F}_2+\mathcal{F}_1$. Then $(\mathcal{HS}^r(\mathcal{G}),+)$ is an Abelian group.
	
	2. According to Theorem \ref{th-srhs}, $\mathcal{I}_{\mathcal{G}}^r$ is a surjection.
	
	3. If $\mathcal{I}_{\mathcal{G}}^r(\mathcal{F})=0_{\mathcal{SR}(G)}$, for some $\mathcal{F}=(F,r,\mathcal{P}^\infty_{\mathbb{C}}(\mathcal{G}))\in\mathcal{SR}^r(G)$, then
	\begin{equation*}
	\zeta(K)F_\gamma^r(x+yi)=0_{\mathcal{SR}(G)}\circ\pi|_{(\mathcal{G}^r_\gamma)^K}^{-1}(x+yK_N)=0,
	\end{equation*}
	for each $N\in\mathbb{N}^+$, $K\in\mathbb{S}^N$, $\gamma\in\mathcal{P}^N_{\mathbb{C}}(\mathcal{G})$, and $x,y\in\mathbb{R}$ with $x+yi\in\mathbb{B}_\gamma^r$. It follows that
	\begin{equation*}
	\begin{split}
	&F_\gamma^r(x+yi)
	\\=&\mathcal{M}(J)^{-1}\mathcal{M}(J)F_\gamma^r(x+yi)
	\\=&\mathcal{M}(J)^{-1}(\zeta(J_1)F_\gamma^r(x+yi),\zeta(J_2)F_\gamma^r(x+yi),...,\zeta(J_{2^N})F_\gamma^r(x+yi))^T
	\\=&0_{2^N\times 1}
	\end{split}
	\end{equation*}
	for each $N\in\mathbb{N}^+$, $I\in\mathbb{S}$, $J=\eta_N(I)$, $\gamma\in\mathcal{P}^N_{\mathbb{C}}(\mathcal{G})$, and $x,y\in\mathbb{R}$ with $x+yi\in\mathbb{B}_\gamma^r$. Then
	\begin{equation*}
	\mathcal{F}=0_{\mathcal{HS}^r(\mathcal{G})}.
	\end{equation*}
	It follows that $\mathcal{I}_{\mathcal{G}}^r$ is an injection.
\end{proof}

\section{$*$-product of slice regular functions}\label{sc-psrf}

The $*$-product is introduced in \cite{Gentili2008001}, and has been extended to axially symmetric slice domain in $\mathbb{H}$ (see \cite{Colombo2009001}). We will discuss $*$-product on $*$-preserving slice-domains over $\mathbb{H}$ with distinguished point, by the tensor product (following \cite{Ghiloni2011001}), in this section.

For each $N\in\mathbb{N}^+$, we consider the tensor product $\mathbb{C}^{\otimes N}\otimes\mathbb{H}$ over $\mathbb{R}$. Let $ab$ (or $a\cdot b$) be the product of $a$ and $b$, defined by
\begin{equation*}
ab:=\otimes_{\imath=1}^{N+1}(a_\imath b_\imath)=(a_1b_1)\otimes(a_2b_2)\otimes...\otimes(a_{N+1}b_{N+1}),
\end{equation*}
where $a=\otimes_{\imath=1}^{N+1}a_\imath\in\mathbb{C}^{\otimes N}\otimes\mathbb{H}$ and $b=\otimes_{\imath=1}^{N+1}b_\imath\in\mathbb{C}^{\otimes N}\otimes\mathbb{H}$. For each $q\in\mathbb{H}$, we set
\begin{equation*}
q_{\mathbb{H}}:=1^{\otimes N}\otimes q,
\end{equation*}
abbreviated as $q$ without ambiguity. We set
\begin{equation*}
i_{N,\jmath}:=1^{\otimes \jmath-1}\otimes i\otimes 1^{\otimes N+1-\jmath},\qquad\forall\ \jmath\in\{1,2,...,2^N\},
\end{equation*}
and
\begin{equation*}
i_N(m):=\prod_{\imath=N}^1(i_{N,\imath}i_{N,\imath-1})^{m_{\imath}},\qquad\forall\ m\in\{1,2,...,2^N\},
\end{equation*}
where $i_0:=1$, and $(m_Nm_{N-1}...m_1)_2$ is the binary number of $m-1$. We set
\begin{equation*}
\zeta_N(i):=(i_N(1),i_N(2),...,i_N(2^N)).
\end{equation*}
We also have
\begin{equation}\label{eq-tt}
i_{N,N}\zeta_N(i)=\zeta_N(i)\sigma_N,
\end{equation}
by the same method of Lemma \cite[Proposition \ref{le-ct}]{Dou2018001}.

For each $N\in\mathbb{N}^+$ and $a=(a_1,a_2,...,a_{2^N})^T\in M_{2^N\times 1}(\mathbb{H})$, we define a map $\mathcal{I}_N:M_{2^N\times 1}(\mathbb{H})\rightarrow\mathbb{C}^{\otimes N}\otimes\mathbb{H}$, by
\begin{equation}\label{eq-2ti}
\mathcal{I}_N:M_{2^N\times 1}(\mathbb{H})\rightarrow\mathbb{C}^{\otimes N}\otimes\mathbb{H},\qquad a\mapsto\zeta_N(i)a_{\mathbb{H}},
\end{equation}
where
\begin{equation*}
a_{\mathbb{H}}:=((a_1)_{\mathbb{H}},(a_2)_{\mathbb{H}},...,(a_{2^N})_{\mathbb{H}})^T.
\end{equation*}
Then
\begin{equation*}
\mathcal{I}_N(e_{N,\imath})=i_N(\imath),\qquad\forall\ \imath\in\{1,2,..,2^N\},
\end{equation*}
where
\begin{equation*}
e_{N,\imath}:=(0_{1\times(\imath-1)},1,0_{1\times(N-\imath+1)})^T.
\end{equation*}

We notice that $\{e_{N,\imath}\}_{\imath=1}^{2^N}$ is a basis over $\mathbb{H}$ of $M_{2^N\times 1}(\mathbb{H})$, and $\{i_N(\imath)\}_{\imath=1}^{2^N}$ a basis over $1^{\otimes N}\otimes\mathbb{H}\cong\mathbb{H}$ of $\mathbb{C}^{\otimes N}\otimes\mathbb{H}$. Then

\begin{prop}\label{pr-vt}
	 Let $N\in\mathbb{N}^+$. Then $\mathcal{I}_N$ is an isomorphism between $(M_{2^N\times 1}(\mathbb{H}),+,\cdot_{\mathbb{H}})$ and $(\mathbb{C}^{\otimes N}\otimes\mathbb{H},+,\cdot_{\mathbb{H}})$, where $\cdot_{\mathbb{H}}$ is the two-sided scalar multiplication between $\mathbb{H}$ (resp. $1^{\otimes N}\otimes\mathbb{H}$) and $M_{2^N\times 1}(\mathbb{H})$ (resp. $\mathbb{C}^{\otimes N}\otimes\mathbb{H}$).
	
	 Similarly $\mathcal{I}_N^{-1}$ exists, and $\mathcal{I}_N^{-1}$ is also an isomorphism between $(\mathbb{C}^{\otimes N}\otimes\mathbb{H},+,\cdot_{\mathbb{H}})$ and $(M_{2^N\times 1}(\mathbb{H}),+,\cdot_{\mathbb{H}})$.
\end{prop}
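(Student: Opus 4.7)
The plan is to exploit the bases identified immediately before the statement. The excerpt already remarks that $\{e_{N,\imath}\}_{\imath=1}^{2^N}$ is a basis over $\mathbb{H}$ of $M_{2^N\times 1}(\mathbb{H})$ and $\{i_N(\imath)\}_{\imath=1}^{2^N}$ is a basis over $1^{\otimes N}\otimes\mathbb{H}\cong\mathbb{H}$ of $\mathbb{C}^{\otimes N}\otimes\mathbb{H}$. The central observation is that by the defining formula (\ref{eq-2ti}),
\begin{equation*}
\mathcal{I}_N(e_{N,\imath})=\zeta_N(i)\cdot(e_{N,\imath})_{\mathbb{H}}=i_N(\imath),\qquad\imath=1,2,\ldots,2^N,
\end{equation*}
so $\mathcal{I}_N$ sends one distinguished basis onto the other. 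Once I check that $\mathcal{I}_N$ is additive and respects both-sided scalar multiplication by $\mathbb{H}$, the bijectivity on bases will automatically promote to a full module isomorphism and $\mathcal{I}_N^{-1}$ will inherit the same structure-preservation.

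First I would dispatch additivity, which is immediate from the expression $\mathcal{I}_N(a)=\zeta_N(i)a_{\mathbb{H}}$ together with $(a+b)_{\mathbb{H}}=a_{\mathbb{H}}+b_{\mathbb{H}}$ and the bilinearity of the matrix product in $\mathbb{C}^{\otimes N}\otimes\mathbb{H}$. Next I would verify compatibility with two-sided scalar multiplication. The key point is that for every $q\in\mathbb{H}$ and every $m\in\{1,2,\ldots,2^N\}$ the element $i_N(m)$ (which lives in the subspace $\mathbb{C}^{\otimes N}\otimes 1$) commutes with $q_{\mathbb{H}}=1^{\otimes N}\otimes q$ (which lives in $1^{\otimes N}\otimes\mathbb{H}$), simply because the two factors of the tensor product act in disjoint slots:
\begin{equation*}
i_N(m)\cdot q_{\mathbb{H}}=i_N(m)\otimes q=q_{\mathbb{H}}\cdot i_N(m).
\end{equation*}
Entrywise commutation therefore gives $\zeta_N(i)q_{\mathbb{H}}=q_{\mathbb{H}}\zeta_N(i)$, whence
\begin{equation*}
\mathcal{I}_N(qa)=\zeta_N(i)q_{\mathbb{H}}a_{\mathbb{H}}=q_{\mathbb{H}}\mathcal{I}_N(a),\qquad\mathcal{I}_N(aq)=\zeta_N(i)a_{\mathbb{H}}q_{\mathbb{H}}=\mathcal{I}_N(a)q_{\mathbb{H}}.
\end{equation*}

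Combining additivity, $(\mathbb{H},\mathbb{H})$-bilinearity, and the basis-to-basis correspondence $e_{N,\imath}\mapsto i_N(\imath)$, I conclude that $\mathcal{I}_N$ is a bijective $(\mathbb{H},\mathbb{H})$-bimodule homomorphism; its inverse, determined by $i_N(\imath)\mapsto e_{N,\imath}$, inherits additivity and both-sided scalar-compatibility automatically. I do not expect any serious obstacle, since the substantive work is the identification of the two bases (already stated) and the disjoint-slot commutation (which is built into the definition of the tensor product over $\mathbb{R}$); everything else is notation chasing in the product $\zeta_N(i)a_{\mathbb{H}}$.
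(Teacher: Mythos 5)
Your verification is correct and is exactly the ``direct from the definition'' argument that the paper itself invokes (its proof of Proposition \ref{pr-vt} is the single line ``This proposition is proved directly by the definition''). You simply make explicit the routine steps the paper omits: additivity of $a\mapsto\zeta_N(i)a_{\mathbb{H}}$, the disjoint-slot commutation $\zeta_N(i)q_{\mathbb{H}}=q_{\mathbb{H}}\zeta_N(i)$ giving two-sided $\mathbb{H}$-linearity, and bijectivity via the basis correspondence $e_{N,\imath}\mapsto i_N(\imath)$ already recorded in the text preceding the statement.
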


\begin{proof}
	This proposition is proved directly by the definition.
\end{proof}

Let $N\in\mathbb{N}^+$. According to (\ref{eq-tt}) and (\ref{eq-2ti}), for each $a\in M_{2^N\times 1}(\mathbb{H})$, we have
\begin{equation}\label{eq-tt2}
\begin{split}
i_{N,N}\mathcal{I}_N(a)=&i_{N,N}\zeta_N(i)a_{\mathbb{H}}
\\=&\zeta_N(i)\sigma_Na_{\mathbb{H}}
\\=&\zeta_N(i)(\sigma_Na)_{\mathbb{H}}
\\=&\mathcal{I}_N(\sigma_N a).
\end{split}
\end{equation}
For each $a\in\mathbb{C}^{\otimes N}\otimes\mathbb{H}$, we also have
\begin{equation}\label{eq-tt1}
\begin{split}
\mathcal{I}_N^{-1}(i_{N,N}a)=&\mathcal{I}_N^{-1}(i_{N,N}\mathcal{I}_N\circ\mathcal{I}_N^{-1}(a))
\\=&\mathcal{I}_N^{-1}(\mathcal{I}_N(\sigma_N\circ\mathcal{I}_N^{-1}(a)))
\\=&\sigma_N\circ\mathcal{I}_N^{-1}(a).
\end{split}
\end{equation}

\begin{defn}
	Let $N\in\mathbb{N}^+$ and $\Omega$ be a domain in $\mathbb{C}$. A function $f:\Omega\rightarrow\mathbb{C}^{\otimes N}\otimes\mathbb{H}$ is called tensor ($N$-)holomorphic or (finite-)holomorphic, if $f$ has continuous partial derivatives and satisfies
	\begin{equation*}
		(\frac{\partial}{\partial x}+i_{N,N}\frac{\partial}{\partial y})f(x+yi)=0_{\mathbb{H}}
	\end{equation*}
	for each $x,y\in\mathbb{R}$ with $x+yi\in\Omega$.
	
	For each $N\in\mathbb{N}^+$, we denote the set of all the tensor $N$-holomorphic function on $\Omega$, by $\mathcal{TH}^N(\Omega)$.
	
	We denote the set of all the tensor finite-holomorphic functions, by $\mathcal{TH}^{\infty}(\Omega)$, defined by
	\begin{equation*}
	\mathcal{TH}^{\infty}(\Omega):=\bigcup_{m\in\mathbb{N}^+}(\mathcal{TH}^m(\Omega)).
	\end{equation*}
\end{defn}

\begin{prop}\label{pr-shths}
	Let $N\in\mathbb{N}^+$ and $\Omega$ be a domain in $\mathbb{C}$. A function $f:\Omega\rightarrow\mathbb{C}^{\otimes N}\otimes\mathbb{H}$ is tensor $N$-holomorphic if and only if, $\mathcal{I}_N^{-1}\circ f$ is stem $N$-holomorphic.
\end{prop}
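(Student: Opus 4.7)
The plan is to transfer the defining Cauchy--Riemann-type equation through the isomorphism $\mathcal{I}_N$, exploiting identity (\ref{eq-tt1}), namely $\mathcal{I}_N^{-1}(i_{N,N}a)=\sigma_N\mathcal{I}_N^{-1}(a)$, to swap multiplication by $i_{N,N}$ on the tensor side with left multiplication by $\sigma_N$ on the vector side. The preliminary observation is that, by Proposition \ref{pr-vt}, both $\mathcal{I}_N$ and $\mathcal{I}_N^{-1}$ are in particular $\mathbb{R}$-linear isomorphisms between finite-dimensional real vector spaces; hence they are continuous, and they commute with the partial operators $\partial/\partial x$ and $\partial/\partial y$ applied componentwise. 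In particular, $f$ admits continuous partial derivatives on $\Omega$ if and only if $\mathcal{I}_N^{-1}\circ f$ does, so the regularity hypotheses of the two sides of the equivalence match automatically.

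For the forward implication, assume $f\in\mathcal{TH}^N(\Omega)$. Then for every $x,y\in\mathbb{R}$ with $x+yi\in\Omega$,
\begin{equation*}
(\tfrac{\partial}{\partial x}+\sigma_N\tfrac{\partial}{\partial y})(\mathcal{I}_N^{-1}\circ f)(x+yi)
=\mathcal{I}_N^{-1}\bigl(\tfrac{\partial f}{\partial x}(x+yi)\bigr)+\sigma_N\,\mathcal{I}_N^{-1}\bigl(\tfrac{\partial f}{\partial y}(x+yi)\bigr),
\end{equation*}
where I used $\mathbb{R}$-linearity and the fact that $\mathcal{I}_N^{-1}$ commutes with partials. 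Applying (\ref{eq-tt1}) to the second summand rewrites it as $\mathcal{I}_N^{-1}\bigl(i_{N,N}\tfrac{\partial f}{\partial y}(x+yi)\bigr)$, so the whole expression collapses to $\mathcal{I}_N^{-1}\bigl((\tfrac{\partial}{\partial x}+i_{N,N}\tfrac{\partial}{\partial y})f(x+yi)\bigr)=\mathcal{I}_N^{-1}(0_{\mathbb{H}})=0_{2^N\times 1}$, showing $\mathcal{I}_N^{-1}\circ f\in\mathcal{SH}^N(\Omega)$.

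The converse is entirely symmetric: assuming $\mathcal{I}_N^{-1}\circ f\in\mathcal{SH}^N(\Omega)$, apply $\mathcal{I}_N$ to the stem equation and use identity (\ref{eq-tt2}), namely $i_{N,N}\mathcal{I}_N(a)=\mathcal{I}_N(\sigma_N a)$, to recover $(\tfrac{\partial}{\partial x}+i_{N,N}\tfrac{\partial}{\partial y})f=0_{\mathbb{H}}$. There is no genuine obstacle; the only point that requires a brief verification is that $\mathcal{I}_N$ is $\mathbb{R}$-linear and commutes with partial differentiation, which is immediate from the definition (\ref{eq-2ti}) since $\zeta_N(i)$ is a constant row-vector whose entries lie in the central subalgebra and the map $a\mapsto a_{\mathbb{H}}$ is $\mathbb{R}$-linear.
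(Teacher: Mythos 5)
Your proof is correct and follows essentially the same route as the paper: both directions transfer the Cauchy--Riemann-type equation through $\mathcal{I}_N^{-1}$ (resp.\ $\mathcal{I}_N$) using the identities (\ref{eq-tt1}) and (\ref{eq-tt2}) to exchange $i_{N,N}$ with $\sigma_N$. The only difference is that you spell out the $\mathbb{R}$-linearity and commutation with partial derivatives that the paper's computation uses implicitly, which is a harmless (and arguably welcome) addition.
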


\begin{proof}
	If $f$ is tensor $N$-holomorphic, then according to (\ref{eq-tt1}),
	\begin{equation*}
	\begin{split}
	(\frac{\partial}{\partial x}+\sigma_N\frac{\partial}{\partial y})\mathcal{I}_N^{-1}\circ f(x+yi)
	=&\mathcal{I}_N^{-1}((\frac{\partial}{\partial x}+i_{N,N}\frac{\partial}{\partial y})f(x+yi))
	\\=&\mathcal{I}_N^{-1}(0_{\mathbb{H}})
	\\=&0_{2^N\times 1}
	\end{split}
	\end{equation*}
	for each $x,y\in\mathbb{R}$ with $x+yi\in\Omega$. Conversely, if $\mathcal{I}_N^{-1}\circ f$ is stem $N$-holomorphic, then according to (\ref{eq-tt2}),
	\begin{equation*}
	\begin{split}
	(\frac{\partial}{\partial x}+i_{N,N}\frac{\partial}{\partial y})f(x+yi)
	=&(\frac{\partial}{\partial x}+i_{N,N}\frac{\partial}{\partial y})\mathcal{I}_N\circ\mathcal{I}_N^{-1}\circ f((x+yi)
	\\=&\mathcal{I}_N((\frac{\partial}{\partial x}+\sigma_N\frac{\partial}{\partial y})\mathcal{I}_N^{-1}\circ f((x+yi))
	\\=&\mathcal{I}_N(0_{2^N\times 1})
	\\=&0_{\mathbb{H}}
	\end{split}
	\end{equation*}
	for each $x,y\in\mathbb{R}$ with $x+yi\in\Omega$.
\end{proof}

\begin{prop}\label{pr-thsh}
	Let $N\in\mathbb{N}^+$, $\Omega$ be a domain in $\mathbb{C}$, and $f,g:\Omega\rightarrow\mathbb{C}^{\otimes N}\otimes\mathbb{H}$ are two tensor $N$-holomorphic functions. Then $fg:\Omega\rightarrow\mathbb{C}^{\otimes N}\otimes\mathbb{H}$ is also a tensor $N$-holomorphic function, where $fg$ is defined by
	\begin{equation*}
	fg(z):=f(z)g(z),\qquad\forall\ z\in\Omega.
	\end{equation*}
\end{prop}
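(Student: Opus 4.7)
The plan is to reduce this to a one-line verification via the Leibniz rule, after establishing a small structural fact about the algebra $\mathbb{C}^{\otimes N}\otimes\mathbb{H}$.

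First I would observe that $i_{N,N}=1^{\otimes N-1}\otimes i\otimes 1_{\mathbb{H}}$ lies in the \emph{center} of $(\mathbb{C}^{\otimes N}\otimes\mathbb{H},+,\cdot)$. Indeed, with the componentwise product defined at the start of Section~7, multiplying $i_{N,N}$ against a simple tensor $a_1\otimes\cdots\otimes a_N\otimes q$ only alters the $N$-th slot by left-multiplying by $i$; since the $N$-th slot lives in the commutative field $\mathbb{C}$, this coincides with right-multiplication by $i$, i.e.\ with multiplying $a_1\otimes\cdots\otimes a_N\otimes q$ against $i_{N,N}$ on the right. The last factor $1_{\mathbb{H}}$ is central in $\mathbb{H}$, so it does not interfere. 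By linearity, $i_{N,N}$ commutes with every element of $\mathbb{C}^{\otimes N}\otimes\mathbb{H}$.

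Next I would apply the Leibniz rule to $\partial_x(fg)$ and $\partial_y(fg)$, which is valid componentwise (the algebra is finite-dimensional over $\mathbb{R}$ and the product is bilinear), and combine:
\begin{equation*}
\bigl(\partial_x+i_{N,N}\partial_y\bigr)(fg)=(\partial_x f)g+f(\partial_x g)+i_{N,N}(\partial_y f)g+i_{N,N}f(\partial_y g).
\end{equation*}
Using centrality of $i_{N,N}$ to move it past $f$ in the last term, this rearranges as
\begin{equation*}
\bigl[(\partial_x+i_{N,N}\partial_y)f\bigr]g+f\bigl[(\partial_x+i_{N,N}\partial_y)g\bigr]=0\cdot g+f\cdot 0=0_{\mathbb{H}},
\end{equation*}
by the tensor $N$-holomorphicity of $f$ and $g$. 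Since both $f$ and $g$ have continuous partial derivatives, so does $fg$, so the defining conditions of tensor $N$-holomorphicity are met.

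The only potentially delicate point is the centrality of $i_{N,N}$, which must be argued carefully because $\mathbb{H}$ is non-commutative; once that is in place, the rest is the same one-line Leibniz calculation that proves the product of two holomorphic functions on a domain in $\mathbb{C}$ is holomorphic. Alternatively, one could try to route everything through $\mathcal{I}_N^{-1}$ and Proposition~\ref{pr-shths}, but since the stem-holomorphic side does not have an obvious multiplication, it is cleaner to work directly in $\mathbb{C}^{\otimes N}\otimes\mathbb{H}$ as above.
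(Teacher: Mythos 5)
Your proposal is correct and follows essentially the same route as the paper: the paper's proof likewise observes that $i_{N,N}\cdot a=a\cdot i_{N,N}$ for every $a\in\mathbb{C}^{\otimes N}\otimes\mathbb{H}$ and then concludes by the Leibniz rule. Your added justification of the centrality of $i_{N,N}$ (the $i$ sits in a commutative tensor slot and the quaternionic slot is $1$) is a welcome elaboration of a step the paper merely asserts.
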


\begin{proof}
	We notice that $i_{N,N}\cdot a=a\cdot i_{N,N}$ for each $a\in\mathbb{C}^{\otimes N}\otimes\mathbb{H}$. Then
	\begin{equation*}
	\begin{split}
	&(\frac{\partial}{\partial x}+i_{N,N}\frac{\partial}{\partial y})fg(x+yi)
	\\=&((\frac{\partial}{\partial x}+i_{N,N}\frac{\partial}{\partial y})f(x+yi))g(x+yi)
	+f(x+yi)((\frac{\partial}{\partial x}+i_{N,N}\frac{\partial}{\partial y})g(x+yi))
	\\=&0_{\mathbb{H}}
	\end{split}
	\end{equation*}
	for each $x,y\in\mathbb{R}$ with $x+yi\in\Omega$.
\end{proof}

\begin{defn}
	Let $N\in\mathbb{N}^+$, $\Omega$ be a domain in $\mathbb{C}$, and $f,g:\Omega\rightarrow M_{2^N\times 1}(\mathbb{H})$ are two $M_{2^N\times 1}(\mathbb{H})$-value functions on $\Omega$. We call
	\begin{equation*}
	f*g:=\mathcal{I}_N^{-1}(\mathcal{I}_N(f)\mathcal{I}_N(g))
	\end{equation*}
	the slice product of $f$ and $g$. This product is called the $*$-product or slice product.
\end{defn}

For each $N\in\mathbb{N}$ and $a,b\in M_{2^N\times 1}(\mathbb{H})$, we set
\begin{equation}\label{eq-sm}
a*b:=\mathcal{I}_N^{-1}(\mathcal{I}_N(a)\mathcal{I}_N(b)).
\end{equation}

Let $N\in\mathbb{N}^+$, $\Omega$ be a domain in $\mathbb{C}$, and $f,g:\Omega\rightarrow M_{2^N\times 1}(\mathbb{H})$ are two $M_{2^N\times 1}(\mathbb{H})$-value functions on $\Omega$. Obviously,
\begin{equation*}
f*g(z)=f(z)*g(z),\qquad\forall\ z\in\Omega.
\end{equation*}

\begin{prop}\label{pr-sh}
	Let $N\in\mathbb{N}^+$, $\Omega$ is a domain in $\mathbb{C}$, and $f,g$ are two stem $N$-holomorphic function on $\Omega$. Then $f*g$ is also a stem $N$-holomorphic function.
\end{prop}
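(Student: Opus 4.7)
The plan is to reduce the statement to the tensor-holomorphic setting, where the analogous fact has already been established in Proposition \ref{pr-thsh}, and then transport back through the isomorphism $\mathcal{I}_N$ of Proposition \ref{pr-vt}. The definition (\ref{eq-sm}) of the $*$-product is engineered precisely to make this transport work: $f*g$ is by construction the $\mathcal{I}_N^{-1}$-image of the pointwise tensor product of $\mathcal{I}_N\circ f$ and $\mathcal{I}_N\circ g$.

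First, I would observe that Proposition \ref{pr-shths} characterizes stem $N$-holomorphy of an $M_{2^N\times 1}(\mathbb{H})$-valued function via tensor $N$-holomorphy of its image under $\mathcal{I}_N$: a function $h:\Omega\to M_{2^N\times 1}(\mathbb{H})$ is stem $N$-holomorphic if and only if $\mathcal{I}_N\circ h:\Omega\to\mathbb{C}^{\otimes N}\otimes\mathbb{H}$ is tensor $N$-holomorphic. (The ``only if'' direction comes from reading Proposition \ref{pr-shths} with $f$ replaced by $\mathcal{I}_N\circ h$, using that $\mathcal{I}_N$ is an $\mathbb{R}$-linear bijection by Proposition \ref{pr-vt}.) Applying this to our hypotheses, $\mathcal{I}_N\circ f$ and $\mathcal{I}_N\circ g$ are tensor $N$-holomorphic on $\Omega$.

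Next, by Proposition \ref{pr-thsh}, the pointwise product $(\mathcal{I}_N\circ f)(\mathcal{I}_N\circ g)$ is again tensor $N$-holomorphic on $\Omega$. Unwinding (\ref{eq-sm}) pointwise gives
\begin{equation*}
\mathcal{I}_N\circ(f*g)(z)=\mathcal{I}_N\bigl(\mathcal{I}_N^{-1}(\mathcal{I}_N(f(z))\mathcal{I}_N(g(z)))\bigr)=(\mathcal{I}_N\circ f)(z)\,(\mathcal{I}_N\circ g)(z)
\end{equation*}
for each $z\in\Omega$, so $\mathcal{I}_N\circ(f*g)$ coincides with the tensor product just shown to be tensor $N$-holomorphic. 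A second appeal to Proposition \ref{pr-shths} (now in the direction ``tensor holomorphy of $\mathcal{I}_N\circ h$ implies stem holomorphy of $h$'') then yields the stem $N$-holomorphy of $f*g$.

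I do not foresee a real obstacle; the whole argument is a diagram-chase through the isomorphism $\mathcal{I}_N$, with the nontrivial analytic content already packaged into Propositions \ref{pr-shths} and \ref{pr-thsh}. The only point worth being careful about is the continuity of the partial derivatives required in the definition of stem holomorphy, but this is automatic since $\mathcal{I}_N$ is $\mathbb{R}$-linear (hence smooth) and commutes with $\partial_x$ and $\partial_y$, so all regularity hypotheses are preserved in both directions.
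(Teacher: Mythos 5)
Your proposal is correct and follows exactly the paper's route: the paper proves Proposition \ref{pr-sh} by the same reduction, citing Proposition \ref{pr-shths} to pass between stem and tensor holomorphy via $\mathcal{I}_N$ and Proposition \ref{pr-thsh} for closure of tensor holomorphic functions under pointwise product. Your write-up simply spells out the diagram chase that the paper leaves implicit.
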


\begin{proof}
	This proposition follows immediately from Proposition \ref{pr-shths} and \ref{pr-thsh}.
\end{proof}

\begin{thm}\label{th-sp}
	Let $\mathcal{F}_1=(F_1,r,\Gamma)$ and $\mathcal{F}_2=(F_2,r,\Gamma)$ be two holomorphic stem systems. Then
	\begin{equation*}
	\mathcal{F}:=(F,r,\Gamma)
	\end{equation*}
	is also a holomorphic stem system, where $F:\Gamma\rightarrow\mathcal{SH}^{\infty}(\mathbb{B}_{\Gamma}^r)$ is the map with
	\begin{equation*}
	F_\gamma:=F(\gamma)\qquad\mbox{and}\qquad F_\gamma=(F_1)_\gamma*(F_2)_\gamma,\qquad\forall\ \gamma\in\Gamma.
	\end{equation*}
	
	We denote $F$ by $F_1*F_2$.
	
	We also denote $\mathcal{F}$ by $\mathcal{F}_1*\mathcal{F}_2$.
\end{thm}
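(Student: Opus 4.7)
The plan is to verify that $\mathcal{F}=(F,r,\Gamma)$ satisfies the four clauses of Definition \ref{df-hssy}, inheriting each from $\mathcal{F}_1$ and $\mathcal{F}_2$. Two are immediate: local holomorphy follows from Proposition \ref{pr-sh} applied to $(F_1)_\gamma,(F_2)_\gamma\in\mathcal{SH}^N(\mathbb{B}_\gamma^r)$ for each $\gamma\in\Gamma^N$; and local compatibility is inherited from $\mathcal{F}_1,\mathcal{F}_2$ because $*$ is defined pointwise, so any coincidence of $(F_\imath)_{\gamma[t_1]}$ and $(F_\imath)_{\gamma[t_2^-]}$ on $\mathbb{B}_{\gamma[t_1]}^r\cap\mathbb{B}_{\gamma[t_2^-]}^r$ for $\imath=1,2$ passes to their pointwise $*$-products.

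The substance is axial (and initial) compatibility; both reduce to a single algebraic claim, namely that the ``append zeros'' map
\begin{equation*}
\iota_m:M_{2^m\times 1}(\mathbb{H})\hookrightarrow M_{2^{m+1}\times 1}(\mathbb{H}),\qquad a\mapsto(a^T,0_{1\times 2^m})^T,
\end{equation*}
commutes with $*$, i.e.\ $\iota_m(a*b)=\iota_m(a)*\iota_m(b)$ for all $a,b\in M_{2^m\times 1}(\mathbb{H})$. Via the isomorphisms $\mathcal{I}_m$ and $\mathcal{I}_{m+1}$ of Proposition \ref{pr-vt}, this corresponds to the $\mathbb{R}$-algebra embedding $j_m:\mathbb{C}^{\otimes m}\otimes\mathbb{H}\hookrightarrow\mathbb{C}^{\otimes(m+1)}\otimes\mathbb{H}$ that inserts a $1$ in the new (last) $\mathbb{C}$-factor. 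The key identification $\mathcal{I}_{m+1}\circ\iota_m=j_m\circ\mathcal{I}_m$ follows by inspecting the formula for $i_{m+1}(\imath)$ when $\imath\le 2^m$: the highest binary bit $m_{m+1}$ of $\imath-1$ vanishes, so the product collapses to $\prod_{k=m}^1(i_{m+1,k}i_{m+1,k-1})^{m_k}=j_m(i_m(\imath))$, while contributions from $\imath>2^m$ are killed by the appended zeros. Since tensor-product multiplication is componentwise and $1\cdot 1=1$ in the inserted slot, $j_m$ is a ring homomorphism, and applying $\mathcal{I}_{m+1}^{-1}$ to the identity $j_m(\mathcal{I}_m(a))\cdot j_m(\mathcal{I}_m(b))=j_m(\mathcal{I}_m(a)\mathcal{I}_m(b))$ delivers $\iota_m(a)*\iota_m(b)=\iota_m(a*b)$.

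Plugging $a=(F_1)_{\gamma[m/N^-]}(x)$ and $b=(F_2)_{\gamma[m/N^-]}(x)$ into this identity and invoking the axial compatibility of $\mathcal{F}_1,\mathcal{F}_2$ yields axial compatibility of $\mathcal{F}$. For initial compatibility, a direct computation shows $(a,0)^T*(b,0)^T=(ab,0)^T$ in $M_{2\times 1}(\mathbb{H})$ (the image product lies in the subalgebra $1\otimes\mathbb{H}\subset\mathbb{C}\otimes\mathbb{H}$), so the initial scalars $f_1,f_2$ of $\mathcal{F}_1,\mathcal{F}_2$ combine to give $f=f_1\cdot f_2$ for $\mathcal{F}$. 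The main (really only) obstacle is the index-bookkeeping identification $i_{m+1}(\imath)=j_m(i_m(\imath))$ for $\imath\le 2^m$; once this is in hand, everything else is a formal consequence of the multiplicativity of $j_m$ and the pointwise nature of all the constructions involved.
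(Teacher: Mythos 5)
Your proposal is correct and follows essentially the same route as the paper: local holomorphy via Proposition \ref{pr-sh}, local compatibility from the pointwise nature of $*$, and axial/initial compatibility reduced to the identity $(a^T,0_{1\times 2^m})^T*(b^T,0_{1\times 2^m})^T=((a*b)^T,0_{1\times 2^m})^T$, which is exactly the paper's computation (\ref{eq-dsm}). Your packaging of that identity as ``$\mathcal{I}_{m+1}\circ\iota_m=j_m\circ\mathcal{I}_m$ with $j_m$ a ring embedding'' is a slightly cleaner way of stating what the paper does inline with $\zeta_N(i)$, but it is the same argument.
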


\begin{proof}
	1). According to Proposition \ref{pr-sh}, $\mathcal{F}$ is locally holomorphic.
	
	2). For each $N\in\mathbb{N}^+$, $N$-part path $\gamma$ in $\Gamma$, $m\in\{1,2,...,N\}$, and $t_1,t_2\in[\frac{m-1}{N},\frac{m}{N}]$ with $t_1<t_2$, if there exists a real number $t\in[t_1,t_2]$ such that
	\begin{equation*}
	\gamma([t_1,t])\subset\mathbb{B}_{\gamma[t_1]}^r\qquad\mbox{and}\qquad\gamma([t,t_2])\subset\mathbb{B}_{\gamma[t_2^-]}^r.
	\end{equation*}
	Then
	\begin{equation*}
	(F_1)_{\gamma[t_1]}(z)=(F_1)_{\gamma[t_2^-]}(z)\qquad\mbox{and}\qquad (F_2)_{\gamma[t_1]}(z)=(F_2)_{\gamma[t_2^-]}(z)
	\end{equation*}
	for each $z\in\mathbb{B}_{\gamma[t_1]}^r\cap\mathbb{B}_{\gamma[t_2^-]}^r$. It follows that
	\begin{equation*}
	\begin{split}
	(F_1)_{\gamma[t_1]}(z)=&(F_1)_{\gamma[t_1]}(z)*(F_2)_{\gamma[t_1]}(z)
	\\=&(F_1)_{\gamma[t_2^-]}(z)*(F_2)_{\gamma[t_2^-]}(z)
	\\=&(F)_{\gamma[t_2^-]}(z)
	\end{split}
	\end{equation*}
	for each $z\in\mathbb{B}_{\gamma[t_1]}^r\cap\mathbb{B}_{\gamma[t_2^-]}^r$. Then $\mathcal{F}$ is locally compatible.
	
	3). According to (\ref{eq-2ti}) and (\ref{eq-sm}), for each $a,b\in M_{2^{N-1}\times 1}(\mathbb{H})$, we have
	\begin{equation}\label{eq-dsm}
	\begin{split}
	&(a^T,0_{1\times 2^{N-1}})^T*(b^T,0_{1\times 2^{N-1}})^T
	\\=&\mathcal{I}_N^{-1}(\mathcal{I}_N((a^T,0_{1\times 2^{N-1}})^T)\cdot\mathcal{I}_N((b^T,0_{1\times 2^{N-1}}))^T)
	\\=&\mathcal{I}_N^{-1}((\zeta_N(i)(a^T,0_{1\times 2^{N-1}})^T_{\mathbb{H}})\cdot(\zeta_N(i)(b^T,0_{1\times 2^{N-1}})^T_{\mathbb{H}}))
	\\=&\mathcal{I}_N^{-1}((\zeta_{N-1}(i)a_{\mathbb{H}})\cdot(\zeta_{N-1}(i)b_{\mathbb{H}}))
	\\=&\mathcal{I}_N^{-1}(\mathcal{I}_{N-1}\circ\mathcal{I}_{N-1}^{-1}(\mathcal{I}_{N-1}(a)\cdot\mathcal{I}_{N-1}(b)))
	\\=&\mathcal{I}_N^{-1}(\zeta_{N-1}(i)(a*b)_{\mathbb{H}})
	\\=&\mathcal{I}_N^{-1}(\zeta_N(i)((a*b)^T,0_{1\times 2^{N-1}})^T_{\mathbb{H}})
	\\=&((a*b)^T,0_{1\times 2^{N-1}})^T.
	\end{split}
	\end{equation}
	It follows that
	\begin{equation*}
	\begin{split}
	F_{\gamma[\frac{m}{N}]}(x)=&(F_1)_{\gamma[\frac{m}{N}]}(x)*(F_2)_{\gamma[\frac{m}{N}]}(x)
	\\=&(((F_1)_{\gamma[\frac{m}{N}^{-1}]}(x))^T,0_{1\times 2^{N-1}})^T*(((F_2)_{\gamma[\frac{m}{N}^{-1}]}(x))^T,0_{1\times 2^{N-1}})^T
	\\=&(((F_1)_{\gamma[\frac{m}{N}^{-1}]}(x)*(F_2)_{\gamma[\frac{m}{N}^{-1}]}(x))^T,0_{1\times 2^{N-1}})^T
	\\=&(F_{\gamma[\frac{m}{N}^{-1}]}(x)^T,0_{1\times 2^{N-1}})^T
	\end{split}
	\end{equation*}
	for each $N\in\mathbb{N}^+$, $m\in\{1,2,...,N-1\}$, $\gamma\in\Gamma$, and $x\in\mathbb{B}_{\gamma[\frac{m}{N}]}^r\cap\mathbb{B}_{\gamma[\frac{m}{N}^-]}^r\cap\mathbb{R}$. Then $\mathcal{F}$ is axially compatible.
	
	4). Since $\mathcal{F}_\imath$ is initially compatible, for each $\imath\in\{1,2\}$ and $\gamma\in\Gamma$, there exists a function $f_\imath:\mathbb{B}_{\gamma[0]}^r\cap\mathbb{R}\rightarrow\mathbb{H}$ such that
	\begin{equation*}
	(F_\imath)_{\gamma[0]}(x)=(f_\imath(x),0)^T,\qquad\forall\ x\in\mathbb{B}_{\gamma[0]}^r\cap\mathbb{R}.
	\end{equation*}
	Let $\alpha$ be the initial path of $\Gamma$, defined by (\ref{eq-2asx}). According to (\ref{eq-dsm}), we have
	\begin{equation*}
	\begin{split}
	F_\alpha(x)=&(F_1)_\alpha(x)*(F_2)_\alpha(x)
	\\=&(f_1(x),0)^T*(f_2(x),0)^T
	\\=&(f_1(x)*f_2(x),0)^T
	\end{split}
	\end{equation*}
	for each $x\in\mathbb{B}_\alpha^r\cap\mathbb{R}$. Then $\mathcal{F}$ is initially compatible. In summary, $\mathcal{F}$ is a holomorphic stem function system.
\end{proof}

\begin{defn}\label{df-psd}
	A slice-domain $\mathcal{G}=(G,\pi,x)$ over $\mathbb{H}$ with distinguished point is called $*$-preserving, if the following properties hold:	
	\begin{enumerate}
		\item $\mathcal{G}$ is $\infty$-axially symmetric.
		
		\item For each $f,g\in\mathcal{SR}(G)$,	there exists $F\in\mathcal{SR}(G)$ such that $F|_{G^x_{\mathbb{R}}}=f|_{G^x_{\mathbb{R}}}g|_{G^x_{\mathbb{R}}}$, where $G^x_{\mathbb{R}}$ is the connected component in $G_{\mathbb{R}}$ containing $x$.
	\end{enumerate}
	Thanks to the \cite[Identity Principle \ref{th-idh}]{Dou2018001}, we can denote
	\begin{equation*}
	f*g:=F.
	\end{equation*}
\end{defn}

\begin{prop}\label{pr-hsr}
	Let $\mathcal{G}=(G,\pi,x_0)$ be a $*$-preserving slice-domain over $\mathbb{H}$ with distinguished point, and $r$ be a stem radius system of $\mathcal{G}$. Then $(\mathcal{HS}^r(\mathcal{G}),+,*)$ is a ring with the identity element $(1_{\mathcal{SR}(G)})^r_\mathcal{G}$, where $1_{\mathcal{SR}(G)}:G\rightarrow\mathbb{H}$ is the identity element of $(\mathcal{SR}(G),+,*)$, defined by
	\begin{equation*}
	1_{\mathcal{SR}(G)}(q):=1,\qquad\forall\ q\in G.
	\end{equation*}
\end{prop}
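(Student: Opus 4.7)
The plan is to leverage Proposition \ref{pr-is} for the additive structure, Theorem \ref{th-sp} for the binary operation $*$, and then verify the remaining ring axioms pointwise by transporting them from the associative $\mathbb{R}$-algebra structure on $\mathbb{C}^{\otimes N}\otimes\mathbb{H}$ via the isomorphism $\mathcal{I}_N$ of Proposition \ref{pr-vt}. Since $*$-preserving implies $\infty$-axially symmetric, Proposition \ref{pr-is} already gives that $(\mathcal{HS}^r(\mathcal{G}),+)$ is an abelian group. Closure of $*$ on $\mathcal{HS}^r(\mathcal{G})$ is immediate from Theorem \ref{th-sp}: the product of two stem systems sharing the same radius map $r$ and index set $\mathcal{P}^{\infty}_{\mathbb{C}}(\mathcal{G})$ yields another stem system with the same $r$ and $\Gamma$, hence an element of $\mathcal{HS}^r(\mathcal{G})$ by Definition \ref{df-2sds}.

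Next I would verify associativity of $*$ and both distributive laws. At every $\gamma\in\mathcal{P}^N_{\mathbb{C}}(\mathcal{G})$ and every $z\in\mathbb{B}_\gamma^r$ the operation on $M_{2^N\times 1}(\mathbb{H})$ is, by definition, $a*b=\mathcal{I}_N^{-1}(\mathcal{I}_N(a)\mathcal{I}_N(b))$. Because $\mathbb{C}^{\otimes N}\otimes\mathbb{H}$ is an associative $\mathbb{R}$-algebra (as the tensor product over $\mathbb{R}$ of the associative $\mathbb{R}$-algebras $\mathbb{C}^{\otimes N}$ and $\mathbb{H}$) and $\mathcal{I}_N$ is a bijection intertwining $+$ and the two-sided $\mathbb{H}$-action, the $*$-product on $M_{2^N\times 1}(\mathbb{H})$ inherits associativity and two-sided distributivity over $+$. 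Since the $*$-product and $+$ of stem systems are defined $\gamma$-wise and pointwise in each $\mathbb{B}_\gamma^r$, the ring axioms transfer at once to $(\mathcal{HS}^r(\mathcal{G}),+,*)$.

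For the identity I would compute $(1_{\mathcal{SR}(G)})^r_\mathcal{G}$ explicitly. Applying (\ref{eq-2fgz}) and (\ref{eq-gfg}) with the constant slice regular function $f\equiv 1$ and any left slice-linearly independent $J\in M_{2^N\times N}(\mathbb{S})$ (for instance $J=\eta_N(I)$) one obtains
\[
\bigl((1_{\mathcal{SR}(G)})^r\bigr)_\gamma(z)=\mathcal{M}(J)^{-1}(1,1,\ldots,1)^T.
\]
Since the first entry of $\zeta(K)$ equals $I(1)=1$ for every $K\in\mathbb{S}^N$, the first column of $\mathcal{M}(J)$ is the all-ones vector; hence this expression collapses to $e_{N,1}=(1,0,\ldots,0)^T$. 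Then $\mathcal{I}_N(e_{N,1})=i_N(1)=1$, the multiplicative identity of $\mathbb{C}^{\otimes N}\otimes\mathbb{H}$, and therefore $e_{N,1}*a=\mathcal{I}_N^{-1}(1\cdot\mathcal{I}_N(a))=a$ and symmetrically $a*e_{N,1}=a$ for every $a\in M_{2^N\times 1}(\mathbb{H})$. Consequently $(1_{\mathcal{SR}(G)})^r_\mathcal{G}$ acts $\gamma$-wise as a two-sided identity throughout $\mathcal{HS}^r(\mathcal{G})$.

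The potential obstacle is not any single hard step but the bookkeeping: one must be sure that multiplying pointwise is compatible with all four conditions of Definition \ref{df-hssy}, and that the various $N$-dependent computations (which involve both the size $2^N$ and the tensor dimension) interact coherently along axial restrictions. Theorem \ref{th-sp} already arranges exactly these compatibilities. The $*$-preserving hypothesis itself does no direct work in the ring axioms on $\mathcal{HS}^r(\mathcal{G})$ — only $\infty$-axial symmetry is used, through Proposition \ref{pr-is} — but it is the natural hypothesis under which $\mathcal{SR}(G)$ carries a $*$ making $1_{\mathcal{SR}(G)}$ a multiplicative identity there, preparing the ground for $\mathcal{I}_{\mathcal{G}}^r$ to be promoted to a ring isomorphism in the sequel.
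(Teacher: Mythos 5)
There is a genuine gap in your closure step, and it is located exactly where you declare the $*$-preserving hypothesis to be doing no work. Theorem \ref{th-sp} only shows that $\mathcal{F}_1*\mathcal{F}_2$ is a holomorphic stem system in the abstract sense of Definition \ref{df-hssy}; it does \emph{not} show that $\mathcal{F}_1*\mathcal{F}_2$ is a holomorphic stem system \emph{on} $\mathcal{G}$. By Definition \ref{df-2sds}, membership in $\mathcal{HS}^r(\mathcal{G})$ requires membership in $\mathcal{HS}(\mathcal{G})$, which by Definition \ref{df-hss} requires the existence of a slice regular function $f$ on $G$ with $f\circ\pi|_{U^K_\gamma}^{-1}=(F_1*F_2)^K_\gamma$ for all $\gamma$ and $K$. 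That is a nontrivial global extension statement: the product stem system is a priori only realized on \emph{some} slice-domain $\mathcal{G}'$ furnished by Theorem \ref{th-sfsr}, which need not be comparable to $\mathcal{G}$ in the required way. The paper closes this gap precisely by invoking the $*$-preserving property: since $\mathcal{I}_{\mathcal{G}}(\mathcal{F}_1)$ and $\mathcal{I}_{\mathcal{G}}(\mathcal{F}_2)$ are slice regular on $G$, there exists $f\in\mathcal{SR}(G)$ restricting on $G^{x_0}_{\mathbb{R}}$ to their pointwise product; one then checks via the identity principle and the initial-compatibility data that this $f$ agrees with $\mathcal{I}_{\mathcal{G}'}(\mathcal{F}_1*\mathcal{F}_2)$ on a common ball around the distinguished point, and hence induces $\mathcal{F}_1*\mathcal{F}_2$ on all of $\mathcal{G}$. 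Your assertion that ``only $\infty$-axial symmetry is used'' inverts the logic of the proposition: without $*$-preservation, $\mathcal{HS}^r(\mathcal{G})$ need not be closed under $*$ at all, which is the whole reason the hypothesis appears.

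The remainder of your argument is sound and matches the paper: the additive group structure from Proposition \ref{pr-is}, distributivity and associativity transported $\gamma$-wise and pointwise through the algebra isomorphism $\mathcal{I}_N$ of Proposition \ref{pr-vt}, and the identification of $((1_{\mathcal{SR}(G)})^r)_\gamma$ with $e_{N,1}=(1,0,\ldots,0)^T$ via the observation that the first column of $\mathcal{M}(J)$ is the all-ones vector. That identity computation is in fact slightly cleaner than the paper's. But the proposal as written does not prove closure, so it does not prove the proposition.
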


\begin{proof}
	\textbf{1). $\mathcal{HS}^r(\mathcal{G})$ is closed under the multiplication $*$.}
	
	Let
	\begin{equation*}
	\mathcal{F}_\imath=(F_\imath,r,\mathcal{P}^\infty_{\mathbb{C}}(\mathcal{G}))\in\mathcal{HS}^r(\mathcal{G}),\qquad\imath=1,2,3.
	\end{equation*}
	According to Theorem \ref{th-sp}, $\mathcal{F}_1*\mathcal{F}_2$ is a holomorphic stem system. And thanks to Theorem \ref{th-sfsr}, there exists a slice-domain $\mathcal{G}'=(G',\pi',x_0')$ over $\mathbb{H}$ with distinguished point such that
	\begin{equation*}
	\mathcal{F}_1*\mathcal{F}_2\prec\mathcal{G}'.
	\end{equation*}
	Let $\alpha$ be the initial path of $\Gamma$, defined by (\ref{eq-2asx}). According to Definition \ref{df-hss} (1), we have
	\begin{equation*}
	\mathcal{P}^\infty_{\mathbb{C}}(\mathcal{G})\subset\mathcal{P}^\infty_{\mathbb{C}}(\mathcal{G}').
	\end{equation*}
	Obviously, there exists a stem radius system $r'$ of $\mathcal{G}'$ with
	\begin{equation*}
	r'|_{\mathcal{P}^\infty_{\mathbb{C}}(\mathcal{G})}=r.
	\end{equation*}
	Let $f_\imath:\mathbb{B}_\alpha^r\rightarrow\mathbb{H}$ be a function such that
	\begin{equation}\label{eq-2fir}
	(F_\imath)_\alpha^r=(f_\imath,0)^T,
	\end{equation}
	where $\imath=1,2$. We set
	\begin{equation*}
	\mathcal{G}^r_\alpha:=\bigcup_{I\in\mathbb{S}}(\mathcal{G}^r_\alpha)^I,\qquad(\mbox{resp.}\quad(\mathcal{G}')^r_\alpha:=\bigcup_{I\in\mathbb{S}}((\mathcal{G}')^r_\alpha)^I)
	\end{equation*}
	where $(\mathcal{G}^r_\alpha)^I$ (resp. $((\mathcal{G}')^r_\alpha)^I$) is a domain in $G_I$ (resp. $G'_I$) defined by Definition \ref{df-srs}. It is trivial to prove that $\mathcal{G}^r_\alpha$ (resp. $(\mathcal{G}')^r_\alpha$) is a domain in $G$ (resp. $G'$) and
	\begin{equation*}
	\pi|_{\mathcal{G}^r_\alpha}:\mathcal{G}^r_\alpha\rightarrow B_\mathbb{H}(\pi(x_0),r_\alpha)\qquad(\mbox{resp.}\quad\pi'|_{(\mathcal{G}')^r_\alpha}:(\mathcal{G}')^r_\alpha\rightarrow B_\mathbb{H}(\pi(x_0),r_\alpha))
	\end{equation*}
	is a slice-homeomorphism, by \cite[Proposition \ref{pr-ul}]{Dou2018001}. Thanks to (\ref{eq-2fkr}), (\ref{eq-2fir}) and Definition \ref{df-hss} (2), for each $x\in\mathbb{B}_\alpha^r\cap\mathbb{R}$, $K\in\mathbb{S}$, and $\imath\in\{1,2\}$, we have
	\begin{equation}\label{eq-ie1}
	\begin{split}
	\mathcal{I}_{\mathcal{G}}(\mathcal{F}_\imath)\circ\pi|_{\mathcal{G}_\alpha^r}^{-1}(x)
	=&\zeta(K)(\mathcal{F}_\imath)^r_\alpha
	\\=&\zeta(K)(f_\imath(x),0)^T
	\\=&(1,K)(f_\imath(x),0)^T
	\\=&f_\imath(x)
	\end{split}
	\end{equation}
	and
	\begin{equation*}
	\begin{split}
	\mathcal{I}_{\mathcal{G}}(\mathcal{F}_1*\mathcal{F}_2)\circ\pi|_{\mathcal{G}_\alpha^r}^{-1}(x)
	=&\zeta(K)(f_1f_2(x),0)^T
	\\=&f_1f_2(x).
	\end{split}
	\end{equation*}

	Similarly, we have
	\begin{equation}\label{eq-2igf}
		\mathcal{I}_{\mathcal{G}'}(\mathcal{F}_1*\mathcal{F}_2)\circ\pi'|_{(\mathcal{G}')_\alpha^r}^{-1}(x)
		=f_1f_2(x)
	\end{equation}
	
	We notice that $\mathcal{I}_{\mathcal{G}}(\mathcal{F}_1)$ and $\mathcal{I}_{\mathcal{G}}(\mathcal{F}_2)$ are slice regular functions on $G$, and since $\mathcal{G}$ is $*$-preserving, it follows that there exists a slice regular function $f$ on $G$ such that
	\begin{equation*}
	f|_{G^x_\mathbb{R}}=(\mathcal{I}_{\mathcal{G}}(\mathcal{F}_1)|_{G^x_\mathbb{R}}\cdot\mathcal{I}_{\mathcal{G}}(\mathcal{F}_2)|_{G^x_\mathbb{R}}
	\end{equation*}
	
	According to (\ref{eq-ie1}) and (\ref{eq-2igf}), we have
	\begin{equation}\label{eq-isf}
	\begin{split}
	\mathcal{I}_{\mathcal{G}'}(\mathcal{F}_1*\mathcal{F}_2)\circ\pi|_{(\mathcal{G}')_\alpha^r}^{-1}(x)
	=&f_1f_2(x)
	\\=&\mathcal{I}_{\mathcal{G}}(\mathcal{F}_1)\circ\pi|_{\mathcal{G}_\alpha^r}^{-1}(x)\cdot\mathcal{I}_{\mathcal{G}}(\mathcal{F}_2)\circ\pi|_{\mathcal{G}_\alpha^r}^{-1}(x)
	\\=&(\mathcal{I}_{\mathcal{G}}(\mathcal{F}_1)\cdot\mathcal{I}_{\mathcal{G}}(\mathcal{F}_2))\circ\pi|_{\mathcal{G}_\alpha^r}^{-1}(x)
	\\=&f\circ\pi|_{\mathcal{G}_\alpha^r}^{-1}(x)
	\end{split}
	\end{equation}
	for each $x\in\mathbb{B}_\alpha^r\cap\mathbb{R}$. According to \cite[Identity Principle \ref{th-dps}]{Dou2018001}, it is clear that
	\begin{equation}\label{eq-2ifg}
	\mathcal{I}_{\mathcal{G}'}(\mathcal{F}_1*\mathcal{F}_2)\circ\pi|_{(\mathcal{G}')_\alpha^r}^{-1}=f\circ\pi|_{\mathcal{G}_\alpha^r}^{-1}:\mathbb{B}_0\rightarrow\mathbb{H},
	\end{equation}
	where
	\begin{equation*}
	\mathbb{B}_0:=\bigcup_{I\in\mathbb{S}}(\mathbb{B}_\alpha^r)^I=B_\mathbb{H}(x_0,r_\alpha).
	\end{equation*}
	We notice that $(\mathbb{B}_0,id_{\mathbb{B}_0},\pi(x_0))$ is a slice-domain over $\mathbb{H}$ with distinguished point,
	\begin{equation*}
	\mathcal{B}\prec\mathcal{G}\qquad\mbox{and}\qquad\mathcal{B}\prec\mathcal{G}',
	\end{equation*}
	and since (\ref{eq-2ifg}), it follows that
	\begin{equation}\label{eq-2igf1}
	\mathcal{I}_{\mathcal{G}'}(\mathcal{F}_1*\mathcal{F}_2)|_{\mathbb{B}_0}=\mathcal{I}_{\mathcal{G}'}(\mathcal{F}_1*\mathcal{F}_2)\circ\pi|_{(\mathcal{G}')_\alpha^r}^{-1}=f\circ\pi|_{\mathcal{G}_\alpha^r}^{-1}=f|_{\mathbb{B}_0}.
	\end{equation}
	Let $\widehat{\mathcal{G}}=(\widehat{G},\widehat{\pi},\widehat{x_0})$ be a slice-domain of existence of $f$ with respect to $\mathcal{G}$ and
	\begin{equation*}
	\widehat{f}:\widehat{G}\rightarrow\mathbb{H}
	\end{equation*}
	be the slice regular extension of $f$. Thanks to (\ref{eq-2igf1}), $\widehat{f}$ is also a slice regular extension of $\mathcal{I}_{\mathcal{G}'}(\mathcal{F}_1*\mathcal{F}_2)$. Then for each $N\in\mathbb{N}^+$, $\gamma\in\mathcal{P}^\infty_{\mathbb{C}}(\mathcal{G})\in\mathcal{HS}^r(\mathcal{G})$ and $K\in\mathbb{S}^N$,
	\begin{equation*}
	(F_1*F_2)^K_\gamma
	=\mathcal{I}_{\mathcal{G}'}(\mathcal{F}_1*\mathcal{F}_2)\circ\pi'|_{((\mathcal{G}')^r_\gamma)^K}^{-1}
	=\widehat{f}\circ\widehat{\pi}|_{(\widehat{\mathcal{G}}^r_\gamma)^K}^{-1}
	=f\circ\pi|_{(\mathcal{G}^r_\gamma)^K}^{-1}.
	\end{equation*}
	And according to Definition \ref{df-hss}, $f$ is the slice regular function induced by $\mathcal{F}_1*\mathcal{F}_2$ on $\mathcal{G}$, it follows that
	\begin{equation*}
	\mathcal{F}_1*\mathcal{F}_2\in\mathcal{HS}(\mathcal{G}).
	\end{equation*}
	And since Definition \ref{df-2sds},
	\begin{equation*}
	\mathcal{F}_1*\mathcal{F}_2\in\mathcal{HS}^r(\mathcal{G}).
	\end{equation*}
	It follows that $\mathcal{HS}^r(\mathcal{G})$ is closed under the multiplication $*$.
	
	\textbf{2). The multiplication $*$ is distributive with respect to the addition $+$.}
	
	For each $N\in\mathbb{N}^+$ and $a,b,c\in M_{2^N\times 1}(\mathbb{H})$, we have
	\begin{equation*}
	\begin{split}
	(a+b)*c=&\mathcal{I}_N^{-1}\circ\mathcal{I}_N((a+b)*c)
	\\=&\mathcal{I}_N^{-1}(\mathcal{I}_N(a+b)\cdot\mathcal{I}_N(c))
	\\=&\mathcal{I}_N^{-1}(\mathcal{I}_N(a)\cdot\mathcal{I}_N(c)+\mathcal{I}_N(b)\cdot\mathcal{I}_N(c))
	\\=&\mathcal{I}_N^{-1}(\mathcal{I}_N(a*c)+\mathcal{I}_N(b*c))
	\\=&\mathcal{I}_N^{-1}(\mathcal{I}_N(a*c+b*c))
	\\=&a*c+b*c.
	\end{split}
	\end{equation*}
	It follows that,
	\begin{equation*}
	((F_1)_\gamma+(F_2)_\gamma)*(F_3)_\gamma=(F_1)_\gamma*(F_3)_\gamma+(F_2)_\gamma*(F_3)_\gamma,\qquad\forall\ \gamma\in\mathcal{P}^\infty_{\mathbb{C}}(\mathcal{G}).
	\end{equation*}
	Thence
	\begin{equation*}
	(F_1+F_2)*F_3=F_1*F_3+F_2*F_3
	\end{equation*}
	and
	\begin{equation*}
	(\mathcal{F}_1+\mathcal{F}_2)*\mathcal{F}_3=\mathcal{F}_1*\mathcal{F}_3+\mathcal{F}_2*\mathcal{F}_3.
	\end{equation*}
	Similarly, we also have
	\begin{equation*}
	\mathcal{F}_1*(\mathcal{F}_2+\mathcal{F}_3)=\mathcal{F}_1*\mathcal{F}_2+\mathcal{F}_1*\mathcal{F}_3.
	\end{equation*}
	
	\textbf{3). $(1_{\mathcal{SR}(G)})^r_\mathcal{G}$ is the multiplicative identity of $(\mathcal{SR}(G),+,*)$.}
	
	We write
	\begin{equation*}
	(1_{\mathcal{SR}(G)})^r_\mathcal{G}=(F_0,r,\mathcal{P}^\infty_{\mathbb{C}}(\mathcal{G}))=:\mathcal{F}_0,
	\end{equation*}
	then for each $N\in\mathbb{N}^+$, $I\in\mathbb{S}$, $J=\eta_N(I)$, $\gamma\in\mathcal{P}^N_{\mathbb{C}}(\mathcal{G})$, and $x,y\in\mathbb{R}$ with $x+yi\in\mathbb{B}^r_{\gamma}$,
	\begin{equation*}
	\begin{split}
	&(F_0)_\gamma(x+yi)
	\\=&\mathcal{M}(J)^{-1}\mathcal{M}(J)(F_0)_\gamma(x+yi)
	\\=&\mathcal{M}(J)^{-1}(\zeta(J_1)(F_0)_\gamma(x+yi),\zeta(J_2)(F_0)_\gamma(x+yi),...,\zeta(J_{2^N})(F_0)_\gamma(x+yi))^T
	\\=&\mathcal{M}(J)^{-1}(a_1,a_2,...,a_{2^N})^T
	\\=&\mathcal{M}(J)^{-1}(1,1,...,1)_{1\times 2^N}^T
	\\=&\mathcal{M}(J)^{-1}\mathcal{M}(J)(1,0_{1\times 2^N-1})^T
	\\=&(1,0_{1\times 2^N-1})^T
	\end{split}
	\end{equation*}
	where
	\begin{equation*}
	a_\imath:=f\circ\pi|_{(\mathcal{G}^r_\gamma)^{J_\imath}}(x+yJ_{\imath,N}).
	\end{equation*}
	We notice that, for each $N\in\mathbb{N}^+$, $\gamma\in\mathcal{P}^N_{\mathbb{C}}(\mathcal{G})$, and $x,y\in\mathbb{R}$ with $x+yi\in\mathbb{B}^r_{\gamma}$,
	\begin{equation*}
	\begin{split}
	&(F_0)_\gamma(x+yi)*(F_1)_\gamma(x+yi)
	\\=&\mathcal{I}_N^{-1}(\mathcal{I}_N((F_0)_\gamma(x+yi))\cdot\mathcal{I}_N((F_1)_\gamma(x+yi)))
	\\=&\mathcal{I}_N^{-1}((\zeta_N(i)(1,0_{1\times 2^N-1})^T)\cdot\mathcal{I}_N((F_1)_\gamma(x+yi)))
	\\=&\mathcal{I}_N^{-1}(1_{\mathbb{H}}\cdot\mathcal{I}_N((F_1)_\gamma(x+yi)))
	\\=&(F_1)_\gamma(x+yi),
	\end{split}
	\end{equation*}
	therefore
	\begin{equation*}
	F_0*F_1=F_1\qquad\mbox{and}\qquad\mathcal{F}_0*\mathcal{F}_1=\mathcal{F}_1.
	\end{equation*}
	Similarly, we also have $\mathcal{F}_1*\mathcal{F}_0=\mathcal{F}_1$. Then $\mathcal{F}_0$ is the identity element of $(\mathcal{HS}^r(\mathcal{G}),+,*)$.
	
	\textbf{4). $(\mathcal{HS}^r(\mathcal{G}),+,*)$ is a ring.}
	
	According to Proposition \ref{pr-is}, $(\mathcal{HS}^r(\mathcal{G}),+)$ is an abelian group. Thanks to 1) and 3), $(\mathcal{HS}^r(\mathcal{G}),*)$ is a monoid. And since 2), $(\mathcal{HS}^r(\mathcal{G}),+,*)$ is a ring.
	
	\textbf{5). $1_{\mathcal{SR}(G)}$ is the identity element of $(\mathcal{SR}(G),+,*)$.}
	
	We notice that
	\begin{equation*}
	1_{\mathcal{SR}(G)}(q)=1,\qquad q\in G^{x_0}_\mathbb{R}.
	\end{equation*}
	Consequently, for each slice regular function $f$ on $G$,
	\begin{equation*}
	(1_{\mathcal{SR}(G)}*f)|_{G^{x_0}_\mathbb{R}}=(1_{\mathcal{SR}(G)}\cdot f)|_{G^{x_0}_\mathbb{R}}=f|_{G^{x_0}_\mathbb{R}}.
	\end{equation*}
	And since \cite[Identity Principle \ref{th-idh}]{Dou2018001}, we have
	\begin{equation*}
	1_{\mathcal{SR}(G)}*f=f.
	\end{equation*}
	Similarly, we also have
	\begin{equation*}
	f*1_{\mathcal{SR}(G)}=f.
	\end{equation*}
	It follows that $1_{\mathcal{SR}(G)}$ is the identity element of $(\mathcal{SR}(G),+,*)$.
\end{proof}

\begin{thm}
	Let $\mathcal{G}=(G,\pi,x)$ be a $*$-preserving slice-domain over $\mathbb{H}$ with distinguished point, and $r$ be a stem radius system of $\mathcal{G}$. Then $\mathcal{I}_{\mathcal{G}}^r$ is a ring isomorphic between $(\mathcal{HS}^r(\mathcal{G}),+,*)$ and $(\mathcal{SR}(G),+,*)$.
\end{thm}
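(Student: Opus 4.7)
The plan is to verify the four ring-isomorphism conditions for $\mathcal{I}_{\mathcal{G}}^r$: bijectivity, additivity, preservation of the multiplicative identity, and multiplicativity. Three of these essentially come for free from earlier results. Since a $*$-preserving slice-domain is by Definition \ref{df-psd} in particular $\infty$-axially symmetric, Proposition \ref{pr-is} already tells us that $\mathcal{I}_{\mathcal{G}}^r$ is a group isomorphism $(\mathcal{HS}^r(\mathcal{G}),+) \to (\mathcal{SR}(G),+)$, which settles bijectivity and additivity simultaneously. The assertion that $\mathcal{I}_{\mathcal{G}}^r$ sends $(1_{\mathcal{SR}(G)})_{\mathcal{G}}^r$ to $1_{\mathcal{SR}(G)}$ is immediate from Definition \ref{df-srs} and Theorem \ref{th-srhs}, since $(1_{\mathcal{SR}(G)})_{\mathcal{G}}^r$ is by construction the stem system associated to $1_{\mathcal{SR}(G)}$.

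The substantive remaining step is multiplicativity:
\begin{equation*}
\mathcal{I}_{\mathcal{G}}^r(\mathcal{F}_1 * \mathcal{F}_2) = \mathcal{I}_{\mathcal{G}}^r(\mathcal{F}_1) * \mathcal{I}_{\mathcal{G}}^r(\mathcal{F}_2), \qquad \forall\ \mathcal{F}_1, \mathcal{F}_2 \in \mathcal{HS}^r(\mathcal{G}).
\end{equation*}
I would extract this from the argument already carried out in the proof of Proposition \ref{pr-hsr}. Writing $f_\imath := \mathcal{I}_{\mathcal{G}}^r(\mathcal{F}_\imath)$, the $*$-preserving hypothesis (Definition \ref{df-psd}) produces $f_1 * f_2 \in \mathcal{SR}(G)$, the unique slice regular function on $G$ whose restriction to $G^x_{\mathbb{R}}$ coincides with the pointwise product $f_1|_{G^x_{\mathbb{R}}} \cdot f_2|_{G^x_{\mathbb{R}}}$. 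On the other hand, Theorems \ref{th-sp} and \ref{th-sfsr} attach to $\mathcal{F}_1 * \mathcal{F}_2$ an induced slice regular function, and the explicit computation (\ref{eq-isf}) inside the proof of Proposition \ref{pr-hsr} shows that on the neighborhood $\mathbb{B}_0 = B_{\mathbb{H}}(\pi(x), r_\alpha)$ of $x$ this induced function coincides with $f_1 \cdot f_2$. Since both $\mathcal{I}_{\mathcal{G}}^r(\mathcal{F}_1 * \mathcal{F}_2)$ and $f_1 * f_2$ are slice regular on the connected slice-domain $G$ and agree on the slice-open set $\mathbb{B}_0$, which meets the real axis, the Identity Principle forces them to be equal on all of $G$.

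The only real delicacy, and the step I expect to require the most care, is domain-bookkeeping. The holomorphic stem system $\mathcal{F}_1 * \mathcal{F}_2$ is originally lifted by Theorem \ref{th-sfsr} to some a priori larger Riemann slice-domain $\mathcal{G}'$ with $\mathcal{G} \prec \mathcal{G}'$, so one must argue that the induced slice regular function really descends to a slice regular function on the original $G$ (and is the correct one). This is exactly what the $*$-preserving hypothesis buys us: it produces the global extension $f_1 * f_2 \in \mathcal{SR}(G)$, which then matches the stem-system-induced function by the Identity-Principle argument above. This is the content of step 1 in the proof of Proposition \ref{pr-hsr}, so no new ideas are required. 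Assembling the four items above yields the ring isomorphism, so the theorem is essentially a corollary of Propositions \ref{pr-is} and \ref{pr-hsr}.
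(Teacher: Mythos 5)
Your proposal is correct and follows essentially the same route as the paper: additivity and bijectivity from Proposition \ref{pr-is}, closure and the identity element from Proposition \ref{pr-hsr}, and multiplicativity via the evaluation on the real points of the initial ball together with the Identity Principle (the computation you cite as (\ref{eq-isf}) from the proof of Proposition \ref{pr-hsr} is exactly the one the paper reuses, merely rewritten there for the inverse map $(\mathcal{I}^r_{\mathcal{G}})^{-1}$ rather than for $\mathcal{I}^r_{\mathcal{G}}$ itself, which is immaterial given bijectivity). Your domain-bookkeeping remark about descending from the auxiliary domain $\mathcal{G}'$ of Theorem \ref{th-sfsr} back to $\mathcal{G}$ is precisely the content of step 1 of Proposition \ref{pr-hsr}, as you note.
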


\begin{proof}
	For each $f,g\in\mathcal{SR}(G)$, $f^r_\mathcal{G}*g^r_\mathcal{G}$ is a holomorphic stem system, by Theorem \ref{th-srhs} and Theorem \ref{th-sp}. And since Proposition \ref{pr-is},
	\begin{equation*}
	h:=\mathcal{I}_{\mathcal{G}}^r(f^r*g^r)
	\end{equation*}
	is a slice regular function on $G$. Let $\alpha$ be the initial path of $\mathcal{P}^{\infty}_{\mathbb{C}}(\mathcal{G})$, defined by (\ref{eq-2asx}).
	
	For each $I\in\mathbb{S}$ and $x\in(\mathbb{B}^r_\alpha)^I\cap\mathbb{R}=(\mathbb{B}^r_\alpha)_{\mathbb{R}}$, we have
	\begin{equation*}
	\begin{split}
	f^r_\alpha(x)=&\left(\begin{matrix}1&I\\1&-I\end{matrix}\right)^{-1}\left(\begin{matrix}f(x)\\f(x)\end{matrix}\right)
	\\=&\frac{1}{2}\left(\begin{matrix}1&1\\-I&I\end{matrix}\right)\left(\begin{matrix}f(x)\\f(x)\end{matrix}\right)
	\\=&(f(x),0)^T
	\end{split}
	\end{equation*}
	Then for each $K\in\mathbb{S}$ and $x\in(\mathbb{B}^r_\alpha)_{\mathbb{R}}$,
	\begin{equation*}
	\begin{split}
	h\circ\pi|^{-1}_{(\mathcal{G}^r_\alpha)^K}(x)=&\zeta(K)(f^r_\alpha*g^r_\alpha(x))
	\\=&\zeta(K)(f^r_\alpha(x)*g^r_\alpha(x))
	\\=&\zeta(K)((f(x),0)^T*(g(x),0)^T)
	\\=&\zeta(K)(f(x)g(x),0)^T
	\\=&(fg)\circ\pi|^{-1}_{(\mathcal{G}^r_\alpha)^K}(x)
	\\=&(f*g)\circ\pi|^{-1}_{(\mathcal{G}^r_\alpha)^K}(x).
	\end{split}
	\end{equation*}
	Thanks to \cite[Identity Principle \ref{th-idh}]{Dou2018001}, it follows that
	\begin{equation*}
	h=f*g.
	\end{equation*}
	According to Proposition \ref{pr-is},
	\begin{equation*}
	\begin{split}
	(\mathcal{I}^r_{\mathcal{G}})^{-1}(f)*(\mathcal{I}^r_{\mathcal{G}})^{-1}(g)
	=&f^r*g^r
	\\=&\mathcal{I}_{\mathcal{G}}^{-1}\circ\mathcal{I}_{\mathcal{G}}(f^r*g^r)
	\\=&\mathcal{I}_{\mathcal{G}}^{-1}(h)
	\\=&\mathcal{I}_{\mathcal{G}}^{-1}(f*g)
	\\=&(f*g)^r
	\\=&(\mathcal{I}^r_{\mathcal{G}})^{-1}(f*g).
	\end{split}
	\end{equation*}
	And since Proposition \ref{pr-is} and Proposition \ref{pr-hsr}, $\mathcal{I}_{\mathcal{G}}^{-1}$ is a ring isomorphic. Then $\mathcal{I}_{\mathcal{G}}$ is also a ring isomorphic.
\end{proof}

Let $\mathcal{G}=(G,\pi,x_0)$ be a $*$-preserving slice-domain over $\mathbb{H}$ with distinguished point, and $f$ be a slice regular function on $G$. For each $I,J\in\mathbb{S}$ with $I\perp J$, there exist $f_\imath:G_\mathbb{R}^{x_0}\rightarrow\mathbb{H}$, $\imath=0,1,2,3$, such that
\begin{equation*}
f|_{G_\mathbb{R}^{x_0}}=f_0+f_1I+f_2J+f_3IJ.
\end{equation*}
We set
\begin{equation*}
g_4:=\frac{1}{2}(f-I*f*I)\in\mathcal{{SR}(G)}\quad\mbox{and}\quad g_0:=\frac{1}{2}(g_4-J*g_4*J)\in\mathcal{{SR}(G)}.
\end{equation*}
We notice that,
\begin{equation*}
\begin{split}
g_4|_{G_\mathbb{R}^{x_0}}=&\frac{1}{2}(f_0+f_1I+f_2J+f_3IJ)-\frac{1}{2}(f_0II+f_1III+f_2IJI+f_3IIJI)
\\=&\frac{1}{2}(f_0+f_1I+f_2J+f_3IJ)+\frac{1}{2}(f_0+f_1I-f_2J-f_3IJ)
\\=&f_0+f_1I,
\end{split}
\end{equation*}
and
\begin{equation*}
\begin{split}
g_0|_{G_\mathbb{R}^{x_0}}=\frac{1}{2}(f_0+f_1I)+\frac{1}{2}(f_0-f_1I)=f_0.
\end{split}
\end{equation*}

Similarly, there exists $g_\imath\in\mathcal{SR}(G)$, such that $g_\imath|_{G_\mathbb{R}^{x_0}}=f_\imath$ for each $\imath\in\{1,2,3\}$. Thence
\begin{equation*}
g_0-g_1I-g_2J-g_3IJ\in\mathcal{SR}(G).
\end{equation*}
We call $g_0-g_1I-g_2J-g_3IJ$ the regular conjugate of $f$, denoted by $f^c$.

We notice that
\begin{equation*}
(f*f^c)|_{G_\mathbb{R}^{x_0}}=f|_{G_\mathbb{R}^{x_0}}\overline{f|_{G_\mathbb{R}^{x_0}}}=\overline{f|_{G_\mathbb{R}^{x_0}}}f|_{G_\mathbb{R}^{x_0}}=(f^c*f)|_{G_\mathbb{R}^{x_0}}.
\end{equation*}
We call $f*f^c$ the symmetrization of $f$, denoted by $f^s$.

Now, we have some propositions with respect to regular conjugates and symmetrizations of slice regular functions, following \cite[Section 5]{Colombo2009001}).

\begin{prop}
	Let $\mathcal{G}=(G,\pi,x_0)$ be a $*$-preserving slice-domain over $\mathbb{H}$ with distinguished point, and $f$ be a slice regular function on $G$. Then
	\begin{equation*}
	(f*g)^c=g^c*f^c\qquad\mbox{and}\qquad(f*g)^s=(g*f)^s.
	\end{equation*}
\end{prop}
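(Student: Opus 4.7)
The plan is to reduce both identities to pointwise equalities on $G_\mathbb{R}^{x_0}$ via the Identity Principle \cite[Theorem \ref{th-idh}]{Dou2018001}. Since $\mathcal{G}$ is $*$-preserving, any slice regular function on $G$ is determined by its restriction to the real component $G_\mathbb{R}^{x_0}$, and by Definition \ref{df-psd} the $*$-product restricts there to the pointwise quaternionic product. Moreover, unpacking the construction of the regular conjugate in the paragraph preceding the statement---where one writes $f|_{G_\mathbb{R}^{x_0}}=f_0+f_1I+f_2J+f_3IJ$ with $f_0,\ldots,f_3$ the real-valued components relative to the basis $\{1,I,J,IJ\}$---yields $f^c|_{G_\mathbb{R}^{x_0}}=\overline{f|_{G_\mathbb{R}^{x_0}}}$, the pointwise quaternionic conjugate. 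In particular, both sides of each desired identity are slice regular on $G$ and restrict to explicit quaternion-valued expressions on the real axis.

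For the first identity, I evaluate on $G_\mathbb{R}^{x_0}$ and use antimultiplicativity of quaternionic conjugation:
\begin{equation*}
(f*g)^c|_{G_\mathbb{R}^{x_0}}=\overline{f|_{G_\mathbb{R}^{x_0}}\cdot g|_{G_\mathbb{R}^{x_0}}}=\overline{g|_{G_\mathbb{R}^{x_0}}}\cdot\overline{f|_{G_\mathbb{R}^{x_0}}}=(g^c*f^c)|_{G_\mathbb{R}^{x_0}}.
\end{equation*}
The Identity Principle then gives $(f*g)^c=g^c*f^c$ on all of $G$.

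For the second identity, I apply the definition $h^s=h*h^c$ together with the first identity to rewrite
\begin{equation*}
(f*g)^s=(f*g)*(g^c*f^c),\qquad(g*f)^s=(g*f)*(f^c*g^c).
\end{equation*}
Restricting to $G_\mathbb{R}^{x_0}$, the two sides collapse to $f(g\overline{g})\overline{f}$ and $g(f\overline{f})\overline{g}$, respectively. Because $g\overline{g}=|g|^2$ and $f\overline{f}=|f|^2$ are real-valued hence central in $\mathbb{H}$, both expressions equal the pointwise $|f|^2|g|^2$. A second invocation of the Identity Principle finishes the argument.

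The only point that requires care is the claim $f^c|_{G_\mathbb{R}^{x_0}}=\overline{f|_{G_\mathbb{R}^{x_0}}}$: one must read the preceding construction as decomposing $f|_{G_\mathbb{R}^{x_0}}$ into its \emph{real} coordinates, so that applying the sign flips to $g_1 I,g_2 J, g_3 IJ$ genuinely produces the quaternionic conjugate on the real axis. Once that identification is fixed, every remaining step is routine quaternion arithmetic combined with the two invocations of the Identity Principle.
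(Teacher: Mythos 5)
Your proposal is correct and follows essentially the same route as the paper: both identities are checked pointwise on $G_\mathbb{R}^{x_0}$ using $h^c|_{G_\mathbb{R}^{x_0}}=\overline{h|_{G_\mathbb{R}^{x_0}}}$, the antimultiplicativity of quaternionic conjugation, and the centrality of the real numbers $f\overline{f}$ and $g\overline{g}$, followed by the Identity Principle. Your explicit remark that the regular-conjugate construction must be read as producing the pointwise quaternionic conjugate on the real component is exactly the fact the paper uses implicitly in its first displayed line.
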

\begin{proof}
	We notice that
	\begin{equation*}
		\begin{split}
		(f*g)^c|_{G_\mathbb{R}^{x_0}}&=\overline{(fg)|_{G_\mathbb{R}^{x_0}}}
		\\&=\overline{g|_{G_\mathbb{R}^{x_0}}}\cdot\overline{f|_{G_\mathbb{R}^{x_0}}}
		\\&=(g^c*f^c)|_{G_\mathbb{R}^{x_0}},
		\end{split}
	\end{equation*}
	and since \cite[Identity Principle \ref{th-idh}]{Dou2018001}, it follows that
	\begin{equation*}
	(f*g)^c=g^c*f^c.
	\end{equation*}
	Similarly, since
	\begin{equation*}
	\begin{split}
	(f*g)^s|_{G_\mathbb{R}^{x_0}}
	&=((f*g)*(f*g)^c)|_{G_\mathbb{R}^{x_0}}
	\\&=(f*g)|_{G_\mathbb{R}^{x_0}}\cdot(f*g)^c|_{G_\mathbb{R}^{x_0}}
	\\&=(f|_{G_\mathbb{R}^{x_0}}g|_{G_\mathbb{R}^{x_0}})(\overline{g|_{G_\mathbb{R}^{x_0}}}\cdot\overline{f|_{G_\mathbb{R}^{x_0}}})
	\\&=(f|_{G_\mathbb{R}^{x_0}}\overline{f|_{G_\mathbb{R}^{x_0}}})(g|_{G_\mathbb{R}^{x_0}}\overline{g|_{G_\mathbb{R}^{x_0}}})
	\\&=(g|_{G_\mathbb{R}^{x_0}}f|_{G_\mathbb{R}^{x_0}})(\overline{f|_{G_\mathbb{R}^{x_0}}}\cdot\overline{g|_{G_\mathbb{R}^{x_0}}})
	\\&=(g*f)^s|_{G_\mathbb{R}^{x_0}},
	\end{split}
	\end{equation*}
	we have
	\begin{equation*}
	(f*g)^s=(g*f)^s.
	\end{equation*}
\end{proof}

\begin{defn}
	Let $N\in\mathbb{N}^+$. A slice-domain $\mathcal{G}=(G,\pi,x)$ over $\mathbb{H}$ with distinguished point is called $N$-part if, for each $q\in G$, there exists an $N$-part path $\gamma$ in $G$ from $x$ to $q$.
\end{defn}

\begin{prop}\label{pr-asp}
	Let $\mathcal{G}=(G,\pi,x_0)$ be an $*$-preserving $1$-part slice-domain over $\mathbb{H}$ with distinguished point, and $f$, $g$ be slice regular functions on $G$. Then
	\begin{equation*}
	(f*g)^s=f^sg^s,
	\end{equation*}
	where $f^sg^s$ is defined by
	\begin{equation*}
	f^sg^s(q):=f^s(q)g^s(q),\qquad\forall\ q\in G.
	\end{equation*}
\end{prop}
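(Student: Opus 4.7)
The plan is to verify the identity $(f*g)^s = f^s g^s$ on the real-axis component $G_\mathbb{R}^{x_0}$ and then promote it to all of $G$ via \cite[Identity Principle \ref{th-idh}]{Dou2018001}. The crucial non-trivial point is to exhibit $f^s g^s$---a priori just a pointwise product of two slice regular functions---as an element of $\mathcal{SR}(G)$.

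First, on $G_\mathbb{R}^{x_0}$ the $*$-product reduces to the pointwise quaternionic product and the regular conjugate reduces to the quaternionic conjugate, so
\begin{equation*}
(f*g)^s|_{G_\mathbb{R}^{x_0}}
= (fg)\overline{(fg)}|_{G_\mathbb{R}^{x_0}}
= f(g\overline{g})\overline{f}|_{G_\mathbb{R}^{x_0}}
= (f\overline{f})(g\overline{g})|_{G_\mathbb{R}^{x_0}}
= (f^s g^s)|_{G_\mathbb{R}^{x_0}},
\end{equation*}
the middle step using that $g\overline{g}=|g|^2$ is real, hence central in $\mathbb{H}$.

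Next, I would show that $f^s$ is slice preserving in the sense that $f^s(G_I^{x_0}) \subset \mathbb{C}_I$ for each $I \in \mathbb{S}$. Pick $J \in \mathbb{S}$ with $J \perp I$ and decompose $f^s = \alpha + I\beta + J\gamma + IJ\delta$ on $G_I^{x_0}$ with $\alpha,\beta,\gamma,\delta$ real-valued. The holomorphy equation $(\partial_x + I\partial_y)f^s = 0$ separates into a pair of Cauchy--Riemann equations for $(\alpha,\beta)$ and an independent pair for $(\gamma,\delta)$; in particular, $\gamma + I\delta$ is a $\mathbb{C}_I$-valued holomorphic function on the one-complex-dimensional connected manifold $G_I^{x_0}$. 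Since $f^s$ restricts to $|f|^2 \in \mathbb{R}$ on $G_\mathbb{R}^{x_0} \subset G_I^{x_0}$, the function $\gamma + I\delta$ vanishes on the non-discrete set $G_\mathbb{R}^{x_0}$, so the classical identity principle on $G_I^{x_0}$ gives $\gamma \equiv \delta \equiv 0$, whence $f^s|_{G_I^{x_0}} \subset \mathbb{C}_I$. The same argument applies to $g^s$.

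Finally, the $1$-part hypothesis yields $G = \bigcup_{I \in \mathbb{S}} G_I^{x_0}$, since every $q \in G$ is the endpoint of a $1$-part slice preserving path from $x_0$ and such a path lies in $G_I^{x_0}$ for some $I$. On each $G_I^{x_0}$ both $f^s$ and $g^s$ take values in the commutative subfield $\mathbb{C}_I$, so $f^s g^s$ is a pointwise product of two $\mathbb{C}_I$-valued holomorphic functions of one complex variable, hence holomorphic; consequently $f^s g^s \in \mathcal{SR}(G)$. The slice regular functions $(f*g)^s$ and $f^s g^s$ agree on $G_\mathbb{R}^{x_0}$ by the first step, and since $\mathcal{G}$ is $\infty$-axially symmetric, \cite[Identity Principle \ref{th-idh}]{Dou2018001} concludes the equality on $G$. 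The main obstacle is the slice-preservation step for $f^s$: the $1$-part assumption is used only at the very end, to cover $G$ by $\{G_I^{x_0}\}_{I\in\mathbb{S}}$, and without it $f^s$ could fail to be $\mathbb{C}_I$-valued on all of $G_I$, so that $f^s g^s$ would not be slice regular globally.
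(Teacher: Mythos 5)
Your proposal is correct and follows essentially the same route as the paper: verify the identity on $G_\mathbb{R}^{x_0}$, observe that $f^s g^s$ is $\mathbb{C}_I$-valued holomorphic on each $G_I$ (which the $1$-part hypothesis makes equal to $G_I^{x_0}$) so that it is slice regular, and conclude by the identity principle. The only difference is that you supply the splitting-plus-classical-identity-principle argument for why $f^s$ is $\mathbb{C}_I$-valued on $G_I^{x_0}$, a point the paper merely asserts with ``we notice that''.
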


\begin{proof}
	For each $I\in\mathbb{S}$ and $q\in G_I$, there exists a domain $U$ in $G_I$ containing $q$. Then there exists $p$ in $U$ with $\pi(p)\notin\mathbb{R}$, and a path $\alpha$ in $U$ from $p$ to $q$. Since $\mathcal{G}$ is $1$-part, there exists a path $\beta$ in $G_I$ from $x_0$ to $p$. Thus $\beta\alpha$ is a path in $G_I$ from $x_0$ to $q$. Consequently, $G_I$ is path-connected, and $(G_I,\pi|_{G_I},x_0)$ is a slice-domain over $\mathbb{C}$ with distinguished point.
	
	We notice that, for each $I\in\mathbb{S}$, $f^sg^s$ is a $\mathbb{C}_I$-valued holomorphic function on $G_I$. It follows that $f^sg^s$ is slice regular function on $G$. Thanks to
	\begin{equation*}
	\begin{split}
	(f*g)^s|_{G_\mathbb{R}^{x_0}}&=(f|_{G_\mathbb{R}^{x_0}}g|_{G_\mathbb{R}^{x_0}})(\overline{g|_{G_\mathbb{R}^{x_0}}}\cdot\overline{f|_{G_\mathbb{R}^{x_0}}})
	\\&=(f|_{G_\mathbb{R}^{x_0}}\overline{f|_{G_\mathbb{R}^{x_0}}})(g|_{G_\mathbb{R}^{x_0}}\overline{g|_{G_\mathbb{R}^{x_0}}})
	\\&=(f*f^c)|_{G_\mathbb{R}^{x_0}}\cdot(g*g^c)|_{G_\mathbb{R}^{x_0}}
	\\&=(f^sg^s)|_{G_\mathbb{R}^{x_0}},
	\end{split}
	\end{equation*}
	and since \cite[Identity Principle \ref{th-idh}]{Dou2018001}, it follows that
	\begin{equation*}
	(f*g)^s=f^sg^s.
	\end{equation*}
\end{proof}

\begin{rmk}\label{rm-asp}
	Let $\Omega$ be an axially symmetric slice domain in $\mathbb{H}$ (see \cite{Colombo2009001}), and $x_0\in\Omega\cap{\mathbb{R}}$. Then $(\Omega,id_{\Omega},x_0)$ is a (schlicht \cite[Definition \ref{df-1asd}]{Dou2018001}) $\infty$-symmetric $1$-part slice-domain over $\mathbb{H}$ with distinguished point. Moreover, $(\Omega,id_{\Omega},x_0)$ is $*$-preserving.
\end{rmk}

\begin{proof}
	This remark follows immediately from \cite[Page 1803]{Colombo2009001}.
\end{proof}

\begin{defn}
	Let $\mathcal{G}$ be an $\infty$-axially symmetric slice-domain over $\mathbb{H}$ with distinguished point. $\mathcal{G}$ is called $1$-strong symmetric, if for each $I\in\mathbb{S}$ and $\alpha,\beta\in\mathcal{P}_\mathcal{G}^1(\mathcal{G})$ with $\alpha^I_\mathcal{G}(1)=\beta^I_\mathcal{G}(1)$, we have
	\begin{equation*}
	\alpha^K_\mathcal{G}(1)=\beta^K_\mathcal{G}(1),\qquad\forall\ K\in\mathbb{S}.
	\end{equation*}
\end{defn}

\begin{prop}
	Let $\mathcal{G}=(G,\pi,x_0)$ be an $*$-preserving slice-domain over $\mathbb{H}$ with distinguished point, and $f$, $g$ be slice regular functions on $G$. If $\mathcal{G}$ is $1$-strong symmetric, then
	\begin{equation}\label{eq-smdm}
	f*g(\gamma_{\mathcal{G}}^I(1))=f(\gamma_{\mathcal{G}}^I(1))g(\gamma_{\mathcal{G}}^{f(\gamma_{\mathcal{G}}^I(1))^{-1}If(\gamma_{\mathcal{G}}^I(1))}(1))
	\end{equation}
	for each $\gamma\in\mathcal{P}^1_{\mathbb{C}}(\mathcal{G})$, and $I\in\mathbb{S}$.
\end{prop}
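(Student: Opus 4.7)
The plan is to reduce the statement to the tensor identity underpinning the $*$-product of holomorphic stem systems, and then to unwind the isomorphism $\mathcal{I}_1$ in the $N=1$ case. Set $q:=\gamma^I_\mathcal{G}(1)$ and $\alpha:=f(q)$ (tacitly assumed nonzero, so that $I':=\alpha^{-1}I\alpha\in\mathbb{S}$ makes sense). Choose a stem radius system $r$ of $\mathcal{G}$ and form $f^r_\mathcal{G}$ and $g^r_\mathcal{G}$ via Theorem \ref{th-srhs}. The ring-isomorphism theorem immediately preceding gives $(f*g)^r_\mathcal{G}=f^r_\mathcal{G}*g^r_\mathcal{G}$; in particular $(f*g)^r_\gamma=f^r_\gamma*g^r_\gamma$ on $\mathbb{B}^r_\gamma$, where the product on the right is the tensor-induced one on $M_{2\times 1}(\mathbb{H})$-valued functions.

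Next I would make this product explicit for $N=1$. Writing $u:=f^r_\gamma(\gamma(1))=(a_1,a_2)^T$ and $v:=g^r_\gamma(\gamma(1))=(b_1,b_2)^T$, the identities $\zeta_1(i)=(1,i_{1,1})$ and $i_{1,1}^2=-1_\mathbb{H}$ in $\mathbb{C}\otimes\mathbb{H}$ yield, via (\ref{eq-sm}),
\begin{equation*}
u*v=(a_1 b_1-a_2 b_2,\ a_1 b_2+a_2 b_1)^T.
\end{equation*}
Applying $\zeta(I)=(1,I)$ to read off the value at $q$ (using Proposition \ref{pr-sfsl} and Definition \ref{df-hss}) gives
\begin{equation*}
(f*g)(q)=a_1 b_1-a_2 b_2+I(a_1 b_2+a_2 b_1).
\end{equation*}

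Finally I would rewrite the claimed right-hand side and match it to this expression. From the stem description, $f(q)=a_1+Ia_2=\alpha$. The $\infty$-axial symmetry of $\mathcal{G}$ ensures $\gamma^{I'}\prec\mathcal{G}$ by Proposition \ref{pr-spa}, and the same stem description yields $g(\gamma^{I'}_\mathcal{G}(1))=\zeta(I')v=b_1+I'b_2$. The conjugation identity $\alpha I'=I\alpha$ then gives
\begin{equation*}
\alpha(b_1+I'b_2)=\alpha b_1+I\alpha b_2=(a_1+Ia_2)b_1+I(a_1+Ia_2)b_2,
\end{equation*}
which expands to $a_1 b_1-a_2 b_2+Ia_1 b_2+Ia_2 b_1$, agreeing with the previous expression for $(f*g)(q)$.

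The main obstacle is the coherence of these stem-system evaluations across different imaginary units: I need to know that $g^r_\gamma(\gamma(1))$ legitimately represents $g$ at both $\gamma^I_\mathcal{G}(1)$ and $\gamma^{I'}_\mathcal{G}(1)$ without ambiguity coming from the particular $1$-part path landing at $q$. This is exactly what the $1$-strong symmetric hypothesis is designed to supply, and it is where that assumption is consumed. The tacit nonvanishing $f(q)\neq 0$, required to define $I'$, restricts the formula to the open subset where $f$ does not vanish.
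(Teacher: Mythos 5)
Your computation is correct and it reaches the identity by a genuinely different route from the paper. The paper's proof goes in the opposite direction: it takes the right-hand side of \eqref{eq-smdm} as the \emph{definition} of a function $\phi_I$ on $G_I^{x_0}$, uses the $1$-strong symmetry to check that $\phi_I(p)$ depends only on $p=\gamma^I_\mathcal{G}(1)$ and not on $\gamma$, verifies by a direct Cauchy--Riemann computation (via the identities $(1,I)\sigma_1=(1,I)\diag(I,I)$ and $F\cdot(1,J)=(1,I)\diag(F,F)$ with $J=F^{-1}IF$) that $\phi_I$ is holomorphic on the slice, observes $\phi_I=fg=f*g$ on $G_\mathbb{R}^{x_0}$, and concludes by the identity principle. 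You instead push both sides through the stem-system formalism, using $(f*g)^r_\gamma=f^r_\gamma*g^r_\gamma$ from the ring-isomorphism theorem and the explicit $N=1$ tensor product, and match the two $2\times1$ vectors directly; this is cleaner and makes the conjugated evaluation point transparent (it is exactly the relation $\alpha I'=I\alpha$). Two remarks. First, your diagnosis of where the $1$-strong symmetry is consumed is not accurate: your argument is carried out for a \emph{fixed} $1$-part path $\gamma$, so no ambiguity about the lift ever arises and the hypothesis is in fact never used in your proof; it is the paper's strategy that needs it, precisely to make $\phi_I$ well defined as a function of the endpoint alone before the identity principle can be invoked. (So, if correct, your argument proves the per-path identity without that hypothesis; this is a feature, not a gap, but you should say so rather than claim the hypothesis is "exactly" what your coherence step requires.) Second, the caveat $f(q)\neq 0$ is shared with the paper's proof (which also forms $F^{-1}IF$); when $f(q)=0$ both sides vanish, since $a_1=-Ia_2$ forces $a_1b_1-a_2b_2+I(a_1b_2+a_2b_1)=0$, so the formula survives with the convention that the right-hand side is $0$ there.
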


\begin{proof}
	Let $r$ be a stem radius system of $\mathcal{G}$ and $I\in\mathbb{S}$. We define a function $\phi_I$ on $G_I^{x_0}$, by
	\begin{equation}\label{eq-2pig}
	\phi_I(\gamma_{\mathcal{G}}^I(1)):=f(\gamma_{\mathcal{G}}^I(1))g(\gamma_{\mathcal{G}}^{f(\gamma_{\mathcal{G}}^I(1))^{-1}If(\gamma_{\mathcal{G}}^I(1))}(1))
	\end{equation}
	for each $\gamma\in\mathcal{P}^1_{\mathbb{C}}(\mathcal{G})$, where $G_I^{x_0}$ is the connected component in $G_I$ containing $x_0$. Since $\mathcal{G}$ is $1$-strong symmetric, then when $p:=\gamma_{\mathcal{G}}^I(1)$ is fixed, $\gamma_{\mathcal{G}}^{f(p)^{-1}If(p)}(1)$ is not depend on the choice of $\gamma$. Hence for each $p\in G_I^{x_0}$, $f*g(p)$ is well defined, by (\ref{eq-2pig}).
	
	We notice that
	\begin{equation*}
	(1,I)\sigma_1=I(1,I)=(1,I)\diag(I,I),
	\end{equation*}
	and
	\begin{equation*}
	F\cdot(1,J)=(F,FF^{-1}IF)=(1,I)\diag(F,F),
	\end{equation*}
	it is clear that
	\begin{equation*}
	\begin{split}
	&(\frac{\partial}{\partial x}+I\frac{\partial}{\partial y})\phi_I\circ\pi_{(\mathcal{G}_\gamma^r)^I}^{-1}(x+yI)
	\\=&(\frac{\partial}{\partial x}+I\frac{\partial}{\partial y})(f\circ\pi_{(\mathcal{G}^r_\gamma)^I}^{-1}(x+yI)\cdot g\circ\pi_{(\mathcal{G}^r_\gamma)^J}^{-1}(x+yJ))
	\\=&(\frac{\partial}{\partial x}+I\frac{\partial}{\partial y})(F\cdot(1,J)g^r_\gamma(x+yi))
	\\=&(\frac{\partial}{\partial x}+I\frac{\partial}{\partial y})((1,I)\diag(F,F)g^r_\gamma(x+yi))
	\\=&(1,I)(\frac{\partial}{\partial x}+\sigma_1\frac{\partial}{\partial y})(\diag(F,F)g^r_\gamma(x+yi))
	\\=&(1,I)((\frac{\partial}{\partial x}+\diag(I,I)\frac{\partial}{\partial y})\diag(F,F))g^r_\gamma(x+yi)
	\\&+(1,I)\diag(F,F)((\frac{\partial}{\partial x}+\sigma_1\frac{\partial}{\partial y})g^r_\gamma(x+yi))
	\\=&0
	\end{split}
	\end{equation*}
	for each $\gamma\in\mathcal{P}^1_{\mathbb{C}}(\mathcal{G})$, and $x,y\in\mathbb{R}$ with $x+yi\in\mathbb{B}_\gamma^r$, where
	\begin{equation*}
	F:=f\circ\pi_{(\mathcal{G}^r_\gamma)^I}^{-1}(x+yI)\qquad\mbox{and}\qquad J:=F^{-1}IF.
	\end{equation*}
	Then $\phi_I$ is a holomorphic function on $G_I^{x_0}$. And since
	\begin{equation*}
	\phi_I(q)=f(q)g(q)=f*g(q),\qquad\forall\ q\in G_\mathbb{R}^{x_0}.
	\end{equation*}
	It follows that
	\begin{equation*}
	\phi_I=(f*g)|_{G^{x_0}_I},\qquad\forall\ I\in\mathbb{S}.
	\end{equation*}
	And we notice that, for each $I\in\mathbb{S}$ and $\gamma\in\mathcal{P}^1_{\mathbb{C}}(\mathcal{G})$,
	\begin{equation*}
	\gamma^I_{\mathcal{G}}(1)\in G_I^{x_0},
	\end{equation*}
	then (\ref{eq-smdm}) holds.
\end{proof}

\begin{prop}\label{pr-iesr}
	Let $\mathcal{G}=(G,\pi,x_0)$ be a $*$-preserving slice-domain over $\mathbb{H}$ with distinguished point, and $f$ be a slice regular function on $G$. If $f^s(q)\neq 0$, for each $q\in G$. Then there exists an inverse element $f^{-*}$ of $f$ in $(\mathcal{SR}(G),*)$, given by
	\begin{equation*}
	f^{-*}(q):=f^s(q)^{-1}*f^c(q),\qquad\forall\ q\in G.
	\end{equation*}
	We call $f^{-*}$ the regular reciprocal of $f$.
\end{prop}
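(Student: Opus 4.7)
I construct $f^{-*}$ as a bona fide element of $\mathcal{SR}(G)$ and then verify the inverse identity by reducing to the real slice and applying \cite[Identity Principle \ref{th-idh}]{Dou2018001}. Set $h := f^s \in \mathcal{SR}(G)$; a direct computation gives
\[
h|_{G^{x_0}_{\mathbb{R}}} = (f * f^c)|_{G^{x_0}_{\mathbb{R}}} = f|_{G^{x_0}_{\mathbb{R}}} \cdot \overline{f|_{G^{x_0}_{\mathbb{R}}}} = |f|^2|_{G^{x_0}_{\mathbb{R}}},
\]
so $h$ is positive real on $G^{x_0}_{\mathbb{R}}$, and by hypothesis nowhere zero on $G$.

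The crucial structural claim is that $h$ is \emph{slice preserving}, i.e.\ $h(G_I) \subset \mathbb{C}_I$ for every $I \in \mathbb{S}$. On any connected component $U$ of $G_I$ meeting $G^{x_0}_{\mathbb{R}}$, I would decompose $\mathbb{H} = \mathbb{C}_I \oplus \mathbb{C}_I J$ with $J \in \mathbb{S}$, $J \perp I$, and write $h|_U = h_1 + h_2 J$ with $h_1, h_2 : U \to \mathbb{C}_I$ ordinary-holomorphic (the left holomorphy of $h|_U$ splits cleanly along this decomposition since $I(h_2 J) = (I h_2) J$ for $h_2 \in \mathbb{C}_I$). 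The requirement $h_1(x) + h_2(x) J \in \mathbb{R}$ for $x \in U \cap \mathbb{R}$ forces $h_2 \equiv 0$ on a real arc, hence $h_2 \equiv 0$ on $U$ by the classical identity principle. Invoking the $\infty$-axial symmetry of $\mathcal{G}$, every component of every slice is reachable from $x_0$ by a finite-part real-crossing path, so slice preservation extends throughout $G$.

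Given slice preservation, the pointwise quaternionic reciprocal $H := h^{-1}$ is well-defined on $G$ and coincides on each slice with the ordinary reciprocal of a non-vanishing $\mathbb{C}_I$-valued holomorphic function, so $H \in \mathcal{SR}(G)$ with $H|_{G^{x_0}_{\mathbb{R}}}(x) = |f(x)|^{-2}$. I then define $f^{-*} := H * f^c \in \mathcal{SR}(G)$, which is well-defined because $\mathcal{G}$ is $*$-preserving. On $G^{x_0}_{\mathbb{R}}$ one has
\[
f^{-*}(x) = |f(x)|^{-2} \overline{f(x)} = f(x)^{-1},
\]
so $(f * f^{-*})|_{G^{x_0}_{\mathbb{R}}} = (f^{-*} * f)|_{G^{x_0}_{\mathbb{R}}} = 1_{\mathcal{SR}(G)}|_{G^{x_0}_{\mathbb{R}}}$, and \cite[Identity Principle \ref{th-idh}]{Dou2018001} propagates these identities to all of $G$.

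The hard part will be the global construction of $H$: the Schwarz reflection yields $\mathbb{C}_I$-valuedness only on slice components that intersect the real axis, and one must use the $\infty$-axial symmetry to show that every slice component of $G$ is real-reachable via finite-part paths (so that slice preservation of $h$ actually holds on all of $G$). If a direct geometric patching proves unwieldy, an alternative is to work through the holomorphic stem system $h^r_{\mathcal{G}}$ of $h$ (Theorem \ref{th-srhs}): slice preservation makes this system essentially scalar in its first component and zero below, pointwise inversion produces a valid holomorphic stem system, and Theorem \ref{th-sfsr} lifts it to the required $H \in \mathcal{SR}(G)$.
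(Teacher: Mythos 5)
Your proposal follows essentially the same route as the paper's proof: form the pointwise reciprocal of $f^s$, multiply by $f^c$, verify the inverse identities on $G^{x_0}_{\mathbb{R}}$ where everything reduces to $f\cdot\overline{f}$, and propagate by the identity principle. The only difference is that you actually justify the step the paper merely asserts in one line (``$(f^s)^{-1}$ is a slice regular function on $G$'') by establishing slice preservation of $f^s$ via the splitting $h_1+h_2J$ and the reachability of slice components from the real set, which is a welcome elaboration rather than a departure.
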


\begin{proof}
	We define a function
	\begin{equation*}
	(f^s)^{-1}:G\rightarrow\mathbb{H},\qquad q\mapsto f^s(q)^{-1},\qquad\forall\ q\in G.
	\end{equation*}
	We notice that $(f^s)^{-1}$ is a slice regular function on $G$. Then
	\begin{equation*}
	\begin{split}
	f^{-*}*f(q)=&f^s(q)^{-1}f^c(q)f(q)\\=&(f^c(q)f(q))^{-1}f^c(q)f(q)\\=&1\\=&1_{\mathcal{SR}(G)}(q)
	\end{split}
	\end{equation*}
	for each $q\in G_{\mathbb{R}}^{x_0}$. According to \cite[Identity Principle \ref{th-idh}]{Dou2018001}, we have
	\begin{equation*}
	f^{-*}*f=1_{\mathcal{SR}(G)}.
	\end{equation*}
	Similarly
	\begin{equation*}
	\begin{split}
	f*f^{-*}(q)=&f(q)f^s(q)^{-1}f^c(q)
	\\=&f(q)(f^c(q)f(q))^{-1}f^c(q)
	\\=&1
	\\=&1_{\mathcal{SR}(G)}(q)
	\end{split}
	\end{equation*}
	for each $q\in G^{x_0}_\mathbb{R}$. According to \cite[Identity Principle \ref{th-idh}]{Dou2018001},
	\begin{equation*}
	f*f^{-*}=1_{\mathcal{SR}(G)}.
	\end{equation*}
\end{proof}

\section{Power series on $*$-preserving Riemann slice-domain}

The power series of slice regular function on quaternions is studied in \cite{Gentili2012001}. There are a new series expansion over quaternions in \cite{Stoppato2012001} and series over real alternative *-algebra in \cite{Ghiloni0214002}. In this section, we consider the power series expansions of slice regular functions on $*$-preserving slice-domains over $\mathbb{H}$ with distinguished point.

\begin{defn}
	Let $\Omega$ be a slice-domain in $\mathbb{H}$, and $f:\Omega\rightarrow\mathbb{H}$ be a function with partial derivatives. The $I$-derivative $\partial_If:\Omega_I\rightarrow\mathbb{H}$ of $f$ is defined by
	\begin{equation*}
	\partial_If(x+yI):=\frac{1}{2}(\frac{\partial}{\partial x}-I\frac{\partial}{\partial y})f|_{\Omega_I}(x+yI)
	\end{equation*}
	for each $I\in\mathbb{S}$ with $\Omega_I\neq\varnothing$, and $x,y\in\mathbb{R}$ with $x+yI\in\Omega_I$. The slice derivative of $f$ is the function $f^{(1)}=\partial_cf:\Omega\rightarrow\mathbb{H}$ defined by $\partial_I f$ on $\Omega_I$ for all $I\in\mathbb{S}$.
	
	We set
	\begin{equation*}
	g^{(0)}:=g
	\end{equation*}
	for each quaternion-valued function $g$ defined on $\Omega$.
	
	Let $n\in\mathbb{N}^+$. If $f$ has $n$-th order partial derivatives. We set
	\begin{equation*}
	f^{(n)}:=(\partial_c)^n f.
	\end{equation*}
\end{defn}

This definition is well defined because
\begin{equation*}
\partial_I f(x+0I)=\frac{\partial}{\partial x} f|_{\Omega_{\mathbb{R}}}(x)=\partial_J f(x+0J),\qquad\forall\ I,J\in\mathbb{S}\ \mbox{and}\ x\in\Omega_\mathbb{R}.
\end{equation*}

\begin{defn}\label{df-1}
	Let $\mathcal{G}=(G,\pi,x_0)$ be a slice-domain over $\mathbb{H}$ with distinguished point, $r$ be a stem radius system of $\mathcal{G}$, and $f$ be a slice regular function on $G$. We define $f^{(n)}:G\rightarrow\mathbb{H}$, by
	\begin{equation}\label{eq-2fng}
	f^{(n)}|_{(\mathcal{G}^r_\gamma)^K}:=(f\circ\pi|_{(\mathcal{G}^r_\gamma)^K}^{-1})^{(n)}
	\end{equation}
	for each $n\in\mathbb{N}$, $N\in\mathbb{N}^+$, $\gamma\in\mathcal{P}^N_{\mathbb{C}}(\mathcal{G})$, and $K\in\mathbb{S}$.
	
	We call $f^{(n)}$ the $n$-th derivative of $f$.
\end{defn}

Since $f^{(n)}(q)$ only depends on $f_q$, for each $q\in\mathbb{H}$, Definition \ref{df-1} is well defined, and $f^{(n)}$ does not depend on the choice of $r$.

We denote the ``$\sigma$-ball" (see \cite[Page 121]{Gentili2012001}) by
\begin{equation*}
\Sigma(p,r):=\{x+yJ\in\mathbb{H}:x,y\in\mathbb{R},x\pm yI\in B_I(p,r)\}.
\end{equation*}

\begin{defn}
	Let $\mathcal{G}=(G,\pi,x_0)$ be a $*$-preserving slice-domain over $\mathbb{H}$ with distinguished point, and $f$ be a slice regular function on $G$. We set
	\begin{equation*}
	f^{*0}:=1_{\mathcal{SR}(G)}\qquad\mbox{and}\qquad f^{*(n+1)}:=f^{*n}*f,\qquad\forall\ n\in\mathbb{N}.
	\end{equation*}
\end{defn}

Let $p\in\mathbb{H}$, and $\mathcal{G}=(G,\pi,x_0)$ be a slice-domain over $\mathbb{H}$ with distinguished point. We denote a function
\begin{equation*}
h_p:\mathbb{H}\rightarrow\mathbb{H},\qquad q\mapsto q-p,\qquad\forall\ q\in\mathbb{H}.
\end{equation*}
Obviously, $h_p$ is a slice regular function on $\mathbb{H}$, and
\begin{equation*}
\mathcal{G}_0:=(\mathbb{H},id_\mathbb{H},0)
\end{equation*}
is an $\infty$-axially symmetric and $1$-part slice domain over $\mathbb{H}$ with distinguished point. According to Proposition \ref{rm-asp}, $\mathcal{G}_0$ is $*$-preserving. Then $h^{*n}$ is a slice-regular function on $\mathbb{H}$ for each $n\in\mathbb{N}$. We notice that $(\mathbb{H},\id_\mathbb{H},\pi(x_0))$ is also a slice domain over $\mathbb{H}$ with distinguished point with
\begin{equation*}
\mathcal{G}\prec(\mathbb{H},id_\mathbb{H},\pi(x_0)),
\end{equation*}
and $\pi:G\rightarrow\mathbb{H}$ is the fiber preserving map from $\mathcal{G}$ to $(\mathbb{H},id_\mathbb{H},\pi(x_0))$. Let $U$ be a domain in $G$, such that $\pi|_U:U\rightarrow\pi(U)$ is a slice-homeomorphism. We denote
\begin{equation}\label{eq-2qpn}
(q'-p')^{*n}_{\pi|_U}:=h_p^{*n}|_G(q'),
\end{equation}
where
\begin{equation*}
q':=\pi|_U^{-1}(q)\qquad\mbox{and}\qquad p':=\pi|_U^{-1}(p)
\end{equation*}
for each $p,q\in\mathbb{H}$.

\begin{thm}\label{th-t}
	Let $\mathcal{G}=(G,\pi,x_0)$ be a slice-domain over $\mathbb{H}$ with distinguished point, and $f$ be a slice regular function on $G$. If $I\in\mathbb{S}$, $r>0$, $q_0\in G_I$, and $U$ is a domain in $G$ containing $q_0$ with $\pi|_U:U\rightarrow\Sigma(\pi(q_0),r)$ be a slice-homeomorphism, then
	\begin{equation}\label{eq-tl}
	f(q)=\sum_{n=0}^{+\infty}\frac{1}{n!}(q-q_0)^{*n}_{\pi|_U} f^{(n)}(q_0),\qquad\forall\ q\in U.
	\end{equation}
\end{thm}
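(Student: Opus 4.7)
The plan is to reduce (\ref{eq-tl}) to the classical Taylor expansion for slice regular functions on $\sigma$-balls in $\mathbb{H}$ by pulling back through the local chart $\pi|_U$. First, I would define $\widetilde{f}:=f\circ\pi|_U^{-1}:\Sigma(\pi(q_0),r)\to\mathbb{H}$. Since $\pi|_U$ is a slice-homeomorphism onto the axially symmetric slice-domain $\Sigma(\pi(q_0),r)\subset\mathbb{H}$ and $f$ is slice regular on $U$, the function $\widetilde{f}$ is slice regular on $\Sigma(\pi(q_0),r)$ in the classical sense. Then I would apply the classical power series expansion for slice regular functions on $\sigma$-balls (see \cite{Gentili2012001}) to $\widetilde{f}$ around $\pi(q_0)$, obtaining
\begin{equation*}
\widetilde{f}(p)=\sum_{n=0}^{+\infty}\frac{1}{n!}(p-\pi(q_0))^{*n}\,\widetilde{f}^{(n)}(\pi(q_0)),\qquad\forall\ p\in\Sigma(\pi(q_0),r).
\end{equation*}

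Next, I would transfer this identity back to $U$ through two essentially tautological identifications. Since the slice derivative is defined chart-by-chart in Definition \ref{df-1} and depends only on the germ at a point, one obtains $f^{(n)}(q_0)=\widetilde{f}^{(n)}(\pi(q_0))$ by using the chart $\pi|_U$. Likewise, unwinding (\ref{eq-2qpn}), for $q\in U$ with $p:=\pi(q)$ we have $(q-q_0)^{*n}_{\pi|_U}=h_{\pi(q_0)}^{*n}|_G(q)=h_{\pi(q_0)}^{*n}(\pi(q))=(p-\pi(q_0))^{*n}$, where the last expression is the classical $*$-power on $\mathbb{H}$; here I use that $\pi:G\to\mathbb{H}$ is itself the fiber-preserving map witnessing $\mathcal{G}\prec(\mathbb{H},\id_\mathbb{H},\pi(x_0))$, so the restriction $h_{\pi(q_0)}^{*n}|_G$ is literally $h_{\pi(q_0)}^{*n}\circ\pi$. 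Substituting $p=\pi(q)$ into the classical expansion of $\widetilde{f}$ and using $f(q)=\widetilde{f}(\pi(q))$ then yields (\ref{eq-tl}).

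The main obstacle I expect is purely bookkeeping: making sure the two identifications above are internally consistent. Specifically, one must verify that Definition \ref{df-1} produces the same value of $f^{(n)}(q_0)$ regardless of which local slice-homeomorphism around $q_0$ is used to compute it (so that the chart $\pi|_U$ may legitimately replace the $(\mathcal{G}_\gamma^r)^K$ used in the definition), and that the pullback $*$-power defined in (\ref{eq-2qpn}) coincides with the classical one on $\mathbb{H}$ pulled back by $\pi$. Once these are settled, convergence of the series in (\ref{eq-tl}) on all of $U$ is inherited directly from the convergence of the classical expansion of $\widetilde{f}$ on $\Sigma(\pi(q_0),r)$, and no further analytic estimate is needed.
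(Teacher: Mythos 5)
Your proposal is correct and follows essentially the same route as the paper: both reduce the claim to the classical power series expansion on the $\sigma$-ball through the chart $\pi|_U$, using that $f^{(n)}$ and $(q-q_0)^{*n}_{\pi|_U}$ are by definition the pullbacks of their classical counterparts on $\mathbb{H}$. The only cosmetic difference is that the paper first verifies the identity on the single slice $U_I$ (where the $*$-powers collapse to ordinary powers and the one-variable Taylor expansion suffices) and then extends to all of $U$ via the convergence statement of \cite[Theorem 8]{Gentili2012001} and the identity principle, whereas you invoke the full quaternionic expansion on $\Sigma(\pi(q_0),r)$ at once and pull it back.
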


\begin{proof}
	If $q\in U_I$, according to (\ref{eq-2fng}) and (\ref{eq-2qpn}), we have
	\begin{equation*}
	\begin{split}
	f(q)=&f\circ\pi|_U^{-1}(\pi(q))
	\\=&\sum_{n=0}^{+\infty}\frac{1}{n!}(\pi(q)-\pi(q_0))^n(f\circ\pi|_U^{-1})^{(n)}(\pi(q_0))
	\\=&\sum_{n=0}^{+\infty}\frac{1}{n!}(q-q_0)^{*n}_{\pi|_U} f^{(n)}(q_0).
	\end{split}
	\end{equation*}
	According to \cite[Theorem 8]{Gentili2012001}, the series in (\ref{eq-tl}) converges in $U$. And since \cite[Identity Principle \ref{th-idh}]{Dou2018001}, (\ref{eq-tl}) holds.
\end{proof}

Let $\mathcal{G}=(G,\pi,x_0)$ be a $*$-preserving slice-domain over $\mathbb{H}$ with distinguished point, $r$ be a stem radius system of $\mathcal{G}$, $f$, $g$ be slice regular functions on $G$, and $\alpha$ be the initial path of $\mathcal{P}^\infty_{\mathbb{C}}(\mathcal{G})$. Thanks to (\ref{eq-2fng}), we have
\begin{equation*}
\begin{split}
	(f*g)^{(n)}\circ\pi|_{(\mathcal{G}_\alpha^r)^I}^{-1}(x)=&((f*g)\circ\pi|_{(\mathcal{G}_\alpha^r)^I}^{-1})^{(n)}(x)
	\\=&((f\circ\pi|_{(\mathcal{G}_\alpha^r)^I}^{-1}\cdot g\circ\pi|_{(\mathcal{G}_\alpha^r)^I}^{-1})^{(n)}(x)
	\\=&(\frac{\partial}{\partial x})^n(f\circ\pi|_{(\mathcal{G}_\alpha^r)^I}^{-1}\cdot g\circ\pi|_{(\mathcal{G}_\alpha^r)^I}^{-1})(x)
	\\=&\sum_{m=0}^n\binom{n}{m}(f\circ\pi|_{(\mathcal{G}_\alpha^r)^I}^{-1})^{(m)}(g\circ\pi|_{(\mathcal{G}_\alpha^r)^I}^{-1})^{(n-m)}(x)
	\\=&\sum_{m=0}^n(\binom{n}{m}(f^{(m)})*(g^{(n-m)}))\circ\pi|_{(\mathcal{G}_\alpha^r)^I}^{-1}(x)
\end{split}
\end{equation*}
for each $x\in\mathbb{B}^r_\alpha\cap\mathbb{R}$, $n\in\mathbb{N}$ and $I\in\mathbb{S}$. And since \cite[Identity Principle \ref{th-idh}]{Dou2018001}, we have
\begin{equation}\label{eq-2fsg}
(f*g)^{(n)}=\sum_{m=0}^n\binom{n}{m}f^{(m)}*g^{(n-m)},\qquad\forall\ n\in\mathbb{N}.
\end{equation}

\begin{prop}
	Let $\mathcal{G}=(G,\pi,x_0)$ be a $*$-preserving slice-domain over $\mathbb{H}$ with distinguished point, and $f$, $g$ be slice regular functions on $G$. If $I\in\mathbb{S}$, $r>0$, $q_0\in G_I$, and $U$ is a domain in $G$ containing $q_0$ with
	\begin{equation*}
	\pi_U:U\rightarrow\Sigma(\pi(q_0),r)
	\end{equation*}
	be a slice-homeomorphism, then
	\begin{equation*}
	f*g(q)=\sum_{n=0}^{+\infty}(\frac{1}{n!}(q-q_0)^{*n}_{\pi|_U} \sum_{m=0}^n\binom{n}{m}f^{(m)}*g^{(n-m)}(q_0)),\qquad\forall\ q\in U.
	\end{equation*}
\end{prop}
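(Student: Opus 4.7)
The statement is essentially the composition of two results already established in the paper, so the proof should be short and direct. My plan is to apply Theorem \ref{th-t} to the slice regular function $h := f*g$ and then substitute the Leibniz-type formula (\ref{eq-2fsg}) for the derivatives $(f*g)^{(n)}(q_0)$.

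First I would observe that since $\mathcal{G}$ is $*$-preserving, Definition \ref{df-psd} guarantees that $f*g \in \mathcal{SR}(G)$, so $h := f*g$ is a well-defined slice regular function on $G$. The hypotheses on $I$, $r$, $q_0$ and $U$ match exactly those of Theorem \ref{th-t}, so applying that theorem to $h$ yields
\begin{equation*}
(f*g)(q) \;=\; \sum_{n=0}^{+\infty}\frac{1}{n!}\,(q-q_0)^{*n}_{\pi|_U}\,(f*g)^{(n)}(q_0),\qquad\forall\ q\in U,
\end{equation*}
with convergence on $U$ inherited from Theorem \ref{th-t}.

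Next I would invoke the identity (\ref{eq-2fsg}), which states that
\begin{equation*}
(f*g)^{(n)} \;=\; \sum_{m=0}^n\binom{n}{m}\,f^{(m)}*g^{(n-m)},\qquad\forall\ n\in\mathbb{N}.
\end{equation*}
Evaluating at $q_0$ and substituting into the Taylor expansion above immediately produces the claimed formula. No rearrangement of the series is required since the inner sum is finite, so there is no convergence subtlety introduced by the substitution.

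There is essentially no obstacle: both ingredients (the Taylor expansion on the slice-homeomorphic neighborhood $U$, and the Leibniz rule for the $*$-product of slice regular functions on a $*$-preserving Riemann slice-domain) are already proved earlier in this section. The only thing worth being careful about is to note that the power $(q-q_0)^{*n}_{\pi|_U}$ is the one defined by (\ref{eq-2qpn}), so that the expansion occurs in the correct local coordinate system around $q_0$, but this is exactly the convention used in Theorem \ref{th-t}. Thus the proof reduces to writing out the two substitutions and citing Theorem \ref{th-t} together with (\ref{eq-2fsg}).
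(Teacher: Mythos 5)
Your proposal is correct and matches the paper's proof exactly: the paper also derives this proposition immediately from Theorem \ref{th-t} applied to $f*g$ together with the Leibniz identity (\ref{eq-2fsg}). You simply spell out the two substitutions that the paper leaves implicit.
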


\begin{proof}
	This proposition follows immediately from Theorem \ref{th-t} and (\ref{eq-2fsg}).
\end{proof}

\begin{defn}
	Let $\Omega$ be a domain in $\mathbb{C}$, $N\in\mathbb{N}^+$, and $F:\Omega\rightarrow\mathbb{C}^{\otimes N}\otimes\mathbb{H}$ be a map. If $F$ has $n$-th order partial derivatives, then we call $F^{(n)}$ be the $n$-th derivative of the stem function $F$, defined by
	\begin{equation*}
	F^{(n)}:=(\partial_{i_{N,N}})^n F,\qquad\forall\ n\in\mathbb{N},
	\end{equation*}
	where
	\begin{equation*}
	\partial_{i_{N,N}}:=\frac{1}{2}(\frac{\partial}{\partial x}-i_{N,N}\frac{\partial}{\partial y}).
	\end{equation*}
\end{defn}

\begin{defn}
	Let $\Omega$ be a domain in $\mathbb{C}$, $N\in\mathbb{N}^+$, and $F:\Omega\rightarrow M_{2^N\times 1}(\mathbb{H})$ be a map. If $F$ has $n$-th order partial derivatives, then we call $F^{(n)}$ be the $n$-th derivative of the stem function $F$, defined by
	\begin{equation*}
	F^{(n)}:=(\partial_{\sigma_N})^n F,\qquad\forall\ n\in\mathbb{N},
	\end{equation*}
	where
	\begin{equation*}
	\partial_{\sigma_N}:=\frac{1}{2}(\frac{\partial}{\partial x}-\sigma_N\frac{\partial}{\partial y}).
	\end{equation*}
\end{defn}

For each $N\in\mathbb{N}^+$ and $x,y\in\mathbb{R}$ with $x+yi=z\in\mathbb{C}$, we set
\begin{equation*}
z^{[N]}:=x\mathbb{I}_{2^N}+y\sigma_N
\end{equation*}
and
\begin{equation*}
z^{(N)}:=x_\mathbb{H}+y_\mathbb{H}i_{N,N}.
\end{equation*}

\begin{thm}\label{th-str}
	Let $\mathcal{G}=(G,\pi,q)$ be a $\infty$-axially symmetric slice-domain over $\mathbb{H}$ with distinguished point, $r$ be a stem radius system of $\mathcal{G}$, and $f$ be a slice regular function on $G$. If $N\in\mathbb{N}^+$ and $\gamma\in\mathcal{P}^N_{\mathbb{C}}(\mathcal{G})$, then $f^r_\gamma$ and $\mathcal{I}_N(f^r_\gamma)$ have any order partial derivatives,
	\begin{equation}\label{eq-se}
	f^r_\gamma(z)=\sum_{n=0}^{+\infty}\frac{1}{n!}(z^{[N]}-z^{[N]}_0)^n(f^r_\gamma)^{(n)}(z_0),
	\end{equation}
	and
	\begin{equation}\label{eq-te}
	\mathcal{I}_N(f^r_\gamma)(z)=\sum_{n=0}^{+\infty}(\frac{1}{n!}(z^{(N)}-z_0^{(N)})^n\cdot(\mathcal{I}_N(f^r_\gamma))^{(n)}(z_0))
	\end{equation}
	for each $z\in\mathbb{B}^r_\gamma$, where $z_0:=\gamma(1)$. Moreover,
	\begin{equation}\label{eq-eee}
	\mathcal{I}_N(f^r_\gamma)^{(n)}=\mathcal{I}_N((f^r_\gamma)^{(n)})=\mathcal{I}_N((f^{(n)})^r_\gamma),\qquad\forall\ n\in\mathbb{N}.
	\end{equation}
\end{thm}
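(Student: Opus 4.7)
The plan is, in four steps: (a) smoothness of $f^r_\gamma$ and $\mathcal{I}_N(f^r_\gamma)$; (b) the stem Taylor expansion (\ref{eq-se}); (c) the tensor Taylor expansion (\ref{eq-te}); and (d) the intertwining chain (\ref{eq-eee}), in which the key identity is $(f^r_\gamma)^{(n)}=(f^{(n)})^r_\gamma$. For step~(a), I would fix $I\in\mathbb{S}$, set $J:=\eta_N(I)$, and combine Theorem~\ref{tm-rf} with (\ref{eq-2fgz}) to write
$$f^r_\gamma(z)=\mathcal{M}(J)^{-1}\bigl(g_1(z),\ldots,g_{2^N}(z)\bigr)^T,\qquad g_m(z):=f\circ\pi|_{U^{J_m}_\gamma}^{-1}(P_{J_{m,N}}(z)).$$
Each $g_m$ is a smooth $\mathbb{H}$-valued function on $\mathbb{B}^r_\gamma$ (a composition of the $\mathbb{R}$-linear map $P_{J_{m,N}}$ with a holomorphic function on the slice $\mathbb{C}_{J_{m,N}}$), so $f^r_\gamma$ is smooth of all orders, and $\mathcal{I}_N(f^r_\gamma)$ is smooth by $\mathbb{R}$-linearity of $\mathcal{I}_N$.

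For step~(b), Theorem~\ref{th-srhs} gives $f^r_\gamma\in\mathcal{SH}^N(\mathbb{B}^r_\gamma)$, i.e.\ $\partial_y f^r_\gamma=\sigma_N\,\partial_x f^r_\gamma$. Using $\sigma_N^2=-\mathbb{I}_{2^N}$ one finds $\partial_{\sigma_N}f^r_\gamma=\partial_x f^r_\gamma$, and inductively $(f^r_\gamma)^{(n)}=\partial_x^n f^r_\gamma$ together with $\partial_x^k\partial_y^l f^r_\gamma=\sigma_N^l\,\partial_x^{k+l}f^r_\gamma$. The two-variable real Taylor expansion of $f^r_\gamma$ at $z_0$, after collecting by $n=k+l$ and using the binomial theorem, becomes
$$f^r_\gamma(z)=\sum_{n=0}^{\infty}\frac1{n!}\bigl((x-x_0)\mathbb{I}_{2^N}+(y-y_0)\sigma_N\bigr)^n\partial_x^n f^r_\gamma(z_0),$$
which is (\ref{eq-se}). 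Convergence on the whole disk $\mathbb{B}^r_\gamma$ is inherited componentwise from the classical Taylor series of each $g_m$ on the slice-disk $(\mathbb{B}^r_\gamma)^{J_m}$.

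For step~(c), I apply $\mathcal{I}_N$ to (\ref{eq-se}) termwise. From (\ref{eq-tt2}), $i_{N,N}\mathcal{I}_N(v)=\mathcal{I}_N(\sigma_N v)$ for every $v\in M_{2^N\times1}(\mathbb{H})$, which iterates via the binomial expansion to
$$(z^{(N)}-z_0^{(N)})^n\,\mathcal{I}_N(v)=\mathcal{I}_N\bigl((z^{[N]}-z_0^{[N]})^n v\bigr),\qquad n\in\mathbb{N}.$$
Taking $v=(f^r_\gamma)^{(n)}(z_0)$ converts (\ref{eq-se}) into (\ref{eq-te}), once the coefficients are identified via step~(d). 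For step~(d), the same intertwining applied to $\partial_{i_{N,N}}=\frac12(\partial_x-i_{N,N}\partial_y)$ yields $\partial_{i_{N,N}}\mathcal{I}_N(F)=\mathcal{I}_N(\partial_{\sigma_N}F)$, iterated to the first equality $\mathcal{I}_N(f^r_\gamma)^{(n)}=\mathcal{I}_N((f^r_\gamma)^{(n)})$ in (\ref{eq-eee}). For the second equality it suffices to prove $(f^r_\gamma)^{(n)}=(f^{(n)})^r_\gamma$. At $n=1$, using step~(b),
$$(f^r_\gamma)^{(1)}(z)=\partial_x f^r_\gamma(z)=\mathcal{M}(J)^{-1}\bigl(\partial_x g_m(z)\bigr)_m;$$
since $f$ restricted to each slice $\mathbb{C}_{J_{m,N}}$ is holomorphic, Definition~\ref{df-1} gives $\partial_x g_m(z)=f^{(1)}\circ\pi|_{U^{J_m}_\gamma}^{-1}(P_{J_{m,N}}(z))$, so $(f^r_\gamma)^{(1)}(z)=(f^{(1)})^r_\gamma(z)$; applying this argument to $f^{(n)}$ in place of $f$ and inducting on $n$ completes (\ref{eq-eee}).

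The main obstacle will be the algebraic bookkeeping needed to commute $\sigma_N$ and $i_{N,N}$ across $\mathcal{M}(J)^{-1}$ and $\mathcal{I}_N$—in particular the iterated form of (\ref{eq-tt2}) that underlies both the passage from (\ref{eq-se}) to (\ref{eq-te}) and the first equality of (\ref{eq-eee}); once these commutation identities are recorded, the Taylor expansions reduce to classical Taylor expansions of each $g_m$, and the derivative-intertwining chain follows by a straightforward induction.
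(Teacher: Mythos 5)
Your proposal is correct, and all the identities it relies on check out: the stem Cauchy--Riemann relation does give $\partial_{\sigma_N}f^r_\gamma=\partial_x f^r_\gamma$ and $\partial_x^k\partial_y^l f^r_\gamma=\sigma_N^l\partial_x^{k+l}f^r_\gamma$; the iterated form of (\ref{eq-tt2}) is exactly what converts $z^{[N]}$-powers into $z^{(N)}$-powers; and $\partial_x g_m=f^{(1)}\circ\pi|_{U^{J_m}_\gamma}^{-1}\circ P_{J_{m,N}}$ follows from Definition \ref{df-1} since each $g_m$ comes from a holomorphic function on a slice. The route, however, differs from the paper's in two genuine ways. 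First, the direction is reversed: the paper proves (\ref{eq-te}) first, by decomposing $\mathcal{I}_N(f^r_\gamma)$ into $\mathbb{C}_{i_{N,N}}$-valued holomorphic components $F^\imath_\jmath$ and Taylor-expanding each, and only then obtains (\ref{eq-se}) by applying $\mathcal{I}_N^{-1}$; you go from the stem side to the tensor side. Second, and more substantively, the paper never differentiates the components to get the second equality of (\ref{eq-eee}); instead it derives an independent expansion $f^r_\gamma(z)=\sum_n\frac{1}{n!}(z^{[N]}-z^{[N]}_0)^n(f^{(n)})^r_\gamma(z_0)$ from the matrix representation $\mathcal{M}(J)^{-1}\bigl(f\circ\pi|_{(\mathcal{G}^r_\gamma)^{J_m}}^{-1}\bigr)_m$ and compares coefficients with (\ref{eq-te}). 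Your direct induction via $(f^r_\gamma)^{(1)}=(f^{(1)})^r_\gamma$ is cleaner and avoids the implicit appeal to uniqueness of power-series coefficients. The one point you should spell out is convergence of the rearranged two-variable Taylor series on all of $\mathbb{B}^r_\gamma$: real-analyticity alone gives only local convergence, so the ``inherited componentwise'' remark must be made precise by actually pushing the slice expansions $\sum_n\frac{1}{n!}\bigl((x-x_0)+(y-y_0)J_{m,N}\bigr)^n$ through $\mathcal{M}(J)^{-1}$ via the commutation of $\mathcal{M}(J)$ with $\sigma_N$ --- which is precisely the computation the paper carries out in its third step.
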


\begin{proof}
	1. Notice that $\mathcal{I}_N(f^r_\gamma):\mathbb{B}^r_{\gamma}\rightarrow\mathbb{C}^{\otimes N}\otimes\mathbb{H}$ is a tensor $N$-holomorphic function, i.e.,
	\begin{equation*}
		(\frac{\partial}{\partial x}+i_{N,N}\frac{\partial}{\partial y})\mathcal{I}_N(f^r_\gamma)(x+yi)=0_{\mathbb{H}}
	\end{equation*}
	for each $x,y\in\mathbb{R}$ with $x+yi\in\mathbb{B}^r_\gamma$. Let $I',J'\in\mathbb{S}$ with $I'\perp J'$. We write
	\begin{equation*}
		\mathcal{I}_N(f^r_\gamma)=\sum_{\jmath=1}^{2^{N-1}}(F_\jmath^0+F_\jmath^1 I'_\mathbb{H}+F_\jmath^2 J'_\mathbb{H}+F_\jmath^3 I'_\mathbb{H}J'_\mathbb{H})i_N(\jmath),
	\end{equation*}
	where
	\begin{equation*}
	F_\jmath^\imath:\mathbb{B}^r_\gamma\rightarrow\{1\}^{\otimes N-1}\otimes\mathbb{C}\otimes\{1\},\qquad\forall\ \jmath\in\{1,2,...,2^{N-1}\}\ \mbox{and}\ \imath\in\{0,1,2,3\}.
	\end{equation*}
	We notice that
	\begin{equation*}
		(\frac{\partial}{\partial x}+i_{N,N}\frac{\partial}{\partial y})F^\imath_\jmath(x+yi)=0_{\mathbb{H}},
	\end{equation*}
	and $F^\imath_\jmath$ has any order partial derivatives, for each $x,y\in\mathbb{R}$ with $x+yi\in\mathbb{B}^r_\gamma$, $\jmath\in\{1,2,...,2^{N-1}\}$ and $\imath\in\{0,1,2,3\}$. Note that
	\begin{equation*}
	\{1\}^{\otimes N-1}\otimes\mathbb{C}\otimes\{1\}=\mathbb{C}_{i_{N,N}},
	\end{equation*}
	where
	\begin{equation*}
	\mathbb{C}_{i_{N,N}}:=\{x_\mathbb{H}+y_\mathbb{H}i_{N,N}:x,y\in\mathbb{R}\}.
	\end{equation*}
	It follows that,
	\begin{equation*}
		F^\imath_\jmath(z)=\sum_{n=0}^{+\infty}(\frac{1}{n!}(z^{(N)}-z_0^{(N)})^n (F^\imath_\jmath)^{(n)}(z_0))
	\end{equation*}
	for each $z\in\mathbb{B}^r_\gamma$, $\jmath\in\{1,2,...,2^{N-1}\}$ and $\imath\in\{0,1,2,3\}$. Then
	\begin{equation}\label{eq-tte}
		\begin{split}
			\mathcal{I}_N(f^r_\gamma)(z)=&\sum_{\imath=0}^{3}\sum_{\jmath=1}^{2^{N-1}}\sum_{n=0}^{+\infty}(\frac{1}{n!}(z^{(N)}-z_0^{(N)})^n (F^\imath_\jmath)^{(n)}(z_0)K_\imath i_N(\jmath))
			\\=&\sum_{n=0}^{+\infty}(\frac{1}{n!}\sum_{\imath=0}^{3}\sum_{\jmath=1}^{2^{N-1}}((z^{(N)}-z_0^{(N)})^n (F^\imath_\jmath)^{(n)}(z_0)K_\imath i_N(\jmath)))
			\\=&\sum_{n=0}^{+\infty}(\frac{1}{n!}(z^{(N)}-z_0^{(N)})^n\cdot(\mathcal{I}_N(f^r_\gamma))^{(n)}(z_0)),
		\end{split}
	\end{equation}
	and $\mathcal{I}_N(f^r_\gamma)$ has any order partial derivatives, for each $z\in\mathbb{B}^r_\gamma$, where
	\begin{equation*}
	K_0:=1_\mathbb{H},\quad K_1:=I'_\mathbb{H},\quad K_2:=J'_\mathbb{H}\quad\mbox{and}\quad K_3:=I'_\mathbb{H}J'_\mathbb{H}.
	\end{equation*}
	Then (\ref{eq-te}) holds.
	
	2. We notice that $(\sigma_N)^2=-\mathbb{I}_{2^N}$, and since (\ref{eq-tt2}), it follows that
	\begin{equation}\label{eq-tese}
	\begin{split}
	\mathcal{I}_N((f^r_\gamma)^{(n)})=&\mathcal{I}_N((\frac{1}{2}(\frac{\partial}{\partial x}-\sigma_N\frac{\partial}{\partial y}))^nf^r_\gamma)
	\\=&(\frac{1}{2}(\frac{\partial}{\partial x}-i_{N,N}\frac{\partial}{\partial y}))^n\mathcal{I}_N(f^r_\gamma)
	\\=&\mathcal{I}_N(f^r_\gamma)^{(n)},
	\end{split}
	\end{equation}
	where $n=0$. And according Proposition \ref{pr-vt} and (\ref{eq-tte}), it follows that $f^r_\gamma$ has any order partial derivatives. Then (\ref{eq-tese}) holds for each $n\in\mathbb{N}$. And thanks to (\ref{eq-tt2}),
	\begin{equation*}
	\begin{split}
	&(z^{(N)}-z_0^{(N)})^n\cdot(\mathcal{I}_N(f^r_\gamma)^{(n)}(z_0))
	\\=&((x-x_0)+(y-y_0)i_{N,N})^n\cdot\mathcal{I}_N((f^r_\gamma)^{(n)}(z_0))
	\\=&\mathcal{I}_N(((x-x_0)+(y-y_0)\sigma_N)^n(f^r_\gamma)^{(n)}(z_0))
	\\=&\mathcal{I}_N((z^{[N]}-z^{[N]}_0)^n(f^r_\gamma)^{(n)}(z_0))
	\end{split}
	\end{equation*}
	for each $z=x+yi\in\mathbb{B}^r_\gamma$ and $n\in\mathbb{N}$, where $x_0,y_0\in\mathbb{R}$ with $x_0+y_0i=z_0$. And since (\ref{eq-tte}), we have
	\begin{equation*}
	\begin{split}
	f^r_\gamma(z)=&\mathcal{I}_N^{-1}\circ\mathcal{I}_N(f^r_\gamma)(z)
	\\=&\mathcal{I}_N^{-1}(\sum_{n=0}^{+\infty}(\frac{1}{n!}(z^{(N)}-z_0^{(N)})^n\cdot(\mathcal{I}_N(f^r_\gamma))^{(n)}(z_0)))
	\\=&\mathcal{I}_N^{-1}(\sum_{n=0}^{+\infty}(\frac{1}{n!}\mathcal{I}_N((z^{[N]}-z^{[N]}_0)^n(f^r_\gamma)^{(n)}(z_0))))
	\\=&\sum_{n=0}^{+\infty}(\frac{1}{n!}(z^{[N]}-z^{[N]}_0)^n(f^r_\gamma)^{(n)}(z_0))
	\end{split}
	\end{equation*}
	for each $z\in\mathbb{B}^r_\gamma$. Then (\ref{eq-se}) holds.
	
	3. Let $I\in\mathbb{S}$ and we set
	\begin{equation*}
	J:=\eta_N(I).
	\end{equation*}
	According to \cite[Proposition \ref{pr-ct}]{Dou2018001}, (\ref{eq-2fkr}), (\ref{eq-2fpi}) and (\ref{eq-tl}), we have
	\begin{equation}\label{eq-tt3}
		\begin{split}
			f^r_\gamma(z)=&\mathcal{M}(J)^{-1}\mathcal{M}(J)f^r_\gamma(z)
			\\=&\mathcal{M}(J)^{-1}(\zeta(J_1)f^r_\gamma(z),\zeta(J_2)f^r_\gamma(z),...,\zeta(J_{2^N})f^r_\gamma(z))^T
			\\=&\mathcal{M}(J)^{-1}
			\left(\begin{matrix}
				f\circ\pi|_{(\mathcal{G}^r_\gamma)^{J_1}}^{-1}(P_{J_{1,N}}(z))\\f\circ\pi|_{(\mathcal{G}^r_\gamma)^{J_2}}^{-1}(P_{J_{2,N}}(z))\\\vdots\\f\circ\pi|_{(\mathcal{G}^r_\gamma)^{J_{2^N}}}^{-1}(P_{J_{2^N,N}}(z))
			\end{matrix}\right)
			\\=&\mathcal{M}(J)^{-1}
			\left(\begin{matrix}
				\sum_{n=0}^{+\infty}\frac{1}{n!}P_{J_{1,N}}(z-z_0)^n(f\circ\pi|_{(\mathcal{G}^r_\gamma)^{J_1}}^{-1})^{(n)}(P_{J_{1,N}}(z_0))\\\sum_{n=0}^{+\infty}\frac{1}{n!}P_{J_{2,N}}(z-z_0)^n(f\circ\pi|_{(\mathcal{G}^r_\gamma)^{J_2}}^{-1})^{(n)}(P_{J_{2,N}}(z_0))\\\vdots\\\sum_{n=0}^{+\infty}\frac{1}{n!}P_{J_{2^N,N}}(z-z_0)^n(f\circ\pi|_{(\mathcal{G}^r_\gamma)^{J_{2^N}}}^{-1})^{(n)}(P_{J_{2^N,N}}(z_0)))
			\end{matrix}\right)
			\\=&\sum_{n=0}^{+\infty}\frac{1}{n!}\mathcal{M}(J)^{-1}((x-x_0)+(y-y_0)D_N(J))^n A
			\\=&\sum_{n=0}^{+\infty}\frac{1}{n!}((x-x_0)+(y-y_0)\sigma_N)^n\mathcal{M}(J)^{-1} A
			\\=&\sum_{n=0}^{+\infty}\frac{1}{n!}(z^{[N]}-z^{[N]}_0)^n(f^{(n)})^r_\gamma(z_0)
		\end{split}
	\end{equation}
	for each $z\in\mathbb{B}^r_\gamma$, where
	\begin{equation*}
		A:=\left(\begin{matrix}
			(f\circ\pi|_{(\mathcal{G}^r_\gamma)^{J_1}}^{-1})^{(n)}(P_{J_{1,N}}(z_0))\\(f\circ\pi|_{(\mathcal{G}^r_\gamma)^{J_2}}^{-1})^{(n)}(P_{J_{2,N}}(z_0))\\\vdots\\(f\circ\pi|_{(\mathcal{G}^r_\gamma)^{J_{2^N}}}^{-1})^{(n)}(P_{J_{2^N,N}}(z_0)))
		\end{matrix}\right).
	\end{equation*}
	And thanks to (\ref{eq-tt}) and (\ref{eq-2ti}),
	\begin{equation}\label{eq-tte1}
		\begin{split}
			\mathcal{I}_N(f^r_\gamma)(z)=&\zeta_N(i)(\sum_{n=0}^{+\infty}(\frac{1}{n!}(z^{[N]}-z^{[N]}_0)^n(f^{(n)})^r_\gamma(z_0)))_\mathbb{H}
			\\=&\sum_{n=0}^{+\infty}\frac{1}{n!}((z^{(N)}-z^{(N)}_0)^n\cdot\zeta_N(i)(f^{(n)})^r_\gamma(z_0)_\mathbb{H})
			\\=&\sum_{n=0}^{+\infty}\frac{1}{n!}((z^{(N)}-z^{(N)}_0)^n\cdot\mathcal{I}_N((f^{(n)})^r_\gamma)(z_0))
		\end{split}
	\end{equation}
	for each $z\in\mathbb{B}^r_\gamma$. And since (\ref{eq-tte}), then
	\begin{equation*}
	\mathcal{I}_N(f^r_\gamma))^{(n)}(z_0)=\mathcal{I}_N((f^{(n)})^r_\gamma)(z_0),\qquad\forall\ n\in\mathbb{N}.
	\end{equation*}
	And according to (\ref{eq-tte1}), we have
	\begin{equation*}
	\mathcal{I}_N(f^r_\gamma))^{(n)}(z)=\mathcal{I}_N((f^{(n)})^r_\gamma)(z),\qquad\forall\ z\in\mathbb{B}^r_\gamma\ \mbox{and}\ n\in\mathbb{N}.
	\end{equation*}
	Hence
	\begin{equation*}
		\mathcal{I}_N(f^r_\gamma)^{(n)}=\mathcal{I}_N((f^{(n)})^r_\gamma),\qquad\forall\ n\in\mathbb{N}.
	\end{equation*}
	And thanks to (\ref{eq-tese}), (\ref{eq-eee}) holds.
\end{proof}

\section{Final remarks}

We have developed some fundamental concepts and results of the theory of slice regular functions on Riemann slice-domains over quaternions. However, we can not discuss singular points of slice regular functions on Riemann slice-domains, e.g., the condition ``$f^s(q)\neq 0$" in Proposition \ref{pr-iesr} is not intrinsic. It is necessary and might be possible to introduce a ``slice-analytic space" with singularities.

\section*{Acknowledgement}

The authors would like to thank Dr. Xieping Wang for the discussion of the theory about Riemann domains over $\mathbb{C}^n$ and pointing out several inaccuracies in a draft of this paper.

\bibliographystyle{abbrv}
\bibliography{mybibfile}

\end{document}